\DeclareMathOperator{\Ran}{Ran}
\DeclareMathOperator{\Dom}{Dom}
\DeclareMathOperator{\Ker}{Ker}
\DeclareMathOperator{\rank}{rank}
\renewcommand\Im{\hbox{{\rm Im}}\,}
\newcommand{\abs}[1]{\lvert#1\rvert}
\newcommand{\Abs}[1]{\left\lvert#1\right\rvert}
\newcommand{\norm}[1]{\lVert#1\rVert}
\newcommand{\jap}[1]{\langle#1\rangle}
\newcommand{\bbT}{{\mathbb T}}
\newcommand{\bbR}{{\mathbb R}}
\newcommand{\bbC}{{\mathbb C}}
\newcommand{\bbD}{{\mathbb D}}
\newcommand{\bbP}{\mathbb {P}}
\newcommand{\calH}{{\mathcal H}}
\newcommand{\calP}{\mathcal{P}}
\newcommand{\calR}{\mathcal{R}}
\newcommand{\calS}{\mathcal{S}}
\newcommand{\calA}{\mathcal{A}}
\numberwithin{equation}{section}
\theoremstyle{plain}
\newtheorem{theorem}{\bf Theorem}[section]
\newtheorem*{theorem*}{Theorem 1.1$'$}
\newtheorem{lemma}[theorem]{\bf Lemma}
\theoremstyle{remark}
\newtheorem*{remark*}{\bf Remark}
\newtheorem{remark}[theorem]{\bf Remark}
\newcommand{\wh}{\widehat}
\newcommand{\eps}{\varepsilon}
\newcommand{\1}{\mathbbm{1}}
\newcommand{\la}{\langle}
\newcommand{\ra}{\rangle}
\newcommand{\e}{\varepsilon}
\newcommand{\oH}{\overset{\circ}{H}}
\newcommand{\ou}{\overset{\circ}{u}}
\begin{document}

\title[Cubic Szeg\H{o} equation on the line]{The cubic Szeg\H{o} equation on the real line: explicit formula and well-posedness on the Hardy class}

\author{Patrick G\'erard}
\address{Universit\'e Paris-Saclay, Laboratoire de Math\'ematiques d'Orsay, CNRS, UMR 8628, France}
\email{patrick.gerard@universite-paris-saclay.fr}

\author{Alexander Pushnitski}
\address{Department of Mathematics, King's College London, Strand, London, WC2R~2LS, U.K.}
\email{alexander.pushnitski@kcl.ac.uk}

\subjclass[2000]{Primary 47B35, secondary 37K20}

\keywords{Cubic Szeg\H{o} equation, evolution flow, Hankel operators}

\date{July 2023}

\begin{abstract}
We establish an explicit formula for the solution of the the cubic Szeg\H{o} equation on the real line. Using this formula, we prove that the evolution flow of this equation can be continuously extended to the whole Hardy class $H^2$ on the real line. 
\end{abstract}

\maketitle

\setcounter{tocdepth}{1}
\tableofcontents
\setcounter{tocdepth}{3}

%%%%%%%%%%%%%%%%%%%%%%%%%%%%%%%%%%%%%%%%%%%%%%%%%
%%%%%%%%%%%%%%%%%%%%%%%%%%%%%%%%%%%%%%%%%%%%%%%%%
\section{Introduction and main result}\label{sec.a}
%%%%%%%%%%%%%%%%%%%%%%%%%%%%%%%%%%%%%%%%%%%%%%%%%
%%%%%%%%%%%%%%%%%%%%%%%%%%%%%%%%%%%%%%%%%%%%%%%%%

%%%%%%%%%%%%%%%%%%%%%%%%%%
\subsection{The  cubic Szeg\H{o} equation on $\bbR$}\label{sec.a1}
%%%%%%%%%%%%%%%%%%%%%%%%%%

For $p\geq1$, let  $H^p(\bbR)$  be the standard Hardy class in the upper half-plane, defined as the set of all functions $f=f(z)$, analytic in $\Im z>0$ and having the finite norm
\[
\norm{f}_{H^p(\bbR)}^p=\sup_{y>0}\int_{-\infty}^\infty \abs{f(x+iy)}^p dx.
\]
We will routinely identify functions $f\in H^p(\bbR)$ with their boundary values on the real line, and with this identification one has $H^p(\bbR)\subset L^p(\bbR)$ as a closed subspace. If $p=2$, this is a Hilbert space with the inner product (linear in the first argument and anti-linear in the second argument) denoted by $\jap{\cdot,\cdot}$; this is identical to the restriction of the $L^2$ inner product. 
Let $\bbP$ be the orthogonal projection onto $H^2(\bbR)$ in $L^2(\bbR)$.

In \cite{OP1,OP2} Pocovnicu, by analogy with the unit circle case \cite{GG1,GG2} introduced and studied the \emph{cubic Szeg\H{o} equation} 
\begin{equation}
i\frac{\partial u}{\partial t}
=
\bbP(\abs{u}^2u), \quad u=u(x,t), \quad x,t\in \bbR.
\label{a1}
\end{equation}
Here,  for every $t\in \bbR$, the function $u(\cdot,t)$ belongs to a suitable Sobolev class of functions in $H^2(\bbR)$. 
More precisely, in \cite{OP1} it was proven to be well-posed on the intersection of $H^2(\bbR)$ with the Sobolev class $W^{s,2}(\bbR)$ with the smoothness exponent $s\geq1/2$.
The cubic Szeg\H{o} equation is easily seen to be the Hamiltonian equation with respect to the symplectic form
$$
\omega(u,v)=\Im \jap{u,v}_{H^2}
$$
on $H^2(\bbR)$ and the Hamiltonian 
$$
E(u)=\frac14\norm{u}_{H^4(\bbR)}^4.
$$
Furthermore, following the strategy of the unit circle case \cite{GG1,GG2}, it was proven
in \cite{OP1,OP2} that this equation is completely integrable and possesses a Lax pair. The Lax pair involves \emph{Hankel operators} on $H^2(\bbR)$ and will be discussed in Section~\ref{sec.b}. 

\subsection{Main results}
The purpose of this paper is two-fold: to establish an \emph{explicit formula for the solution} of the cubic Szeg\H{o} equation \eqref{a1} and to extend its flow to the whole Hardy space $H^2(\bbR)$. 
The formula we establish (see Theorem~\ref{Szegoline}) is heavily based on the Lax pair structure for the cubic Szeg\H{o} equation. It involves the Hankel operator with the symbol $u(\cdot,0)$. 
It will be convenient to state this formula  in Section~\ref{sec.b}  after all the ingredients have been introduced.

Here we describe the second main result. 
Let $\Phi(t)$, $t>0$, be the (non-linear) flow map of \eqref{a1}, defined on $H^2(\bbR)\cap W^{s,2}(\bbR)$ with $s\geq1/2$, i.e. if $u=u(x,t)$ is a solution to \eqref{a1}, then 
$$
\Phi(t)u(\cdot,0)=u(\cdot,t). 
$$
%%%%%%%%%%%%%%
\begin{theorem}\label{thm.a1}
%%%%%%%%%%%%%%
The flow map $\Phi(t)$ can be extended to $H^2(\bbR)$ such the following properties are satisfied:
\begin{enumerate}[\rm (i)]
\item
$\norm{\Phi(t)u}_{H^2}=\norm{u}_{H^2}$ for all $u\in H^2(\bbR)$;
\item
if $\norm{u_n-u}_{H^2}\to0$, then $\norm{\Phi(t)u_n-\Phi(t) u}_{H^2}\to0$ as $n\to\infty$;
\item
the map $t\mapsto\Phi(t)u$ is continuous in $H^2(\bbR)$ for all $u\in H^2(\bbR)$;
\item
if $u\in H^4(\bbR)$, then $\Phi(t)u\in H^4(\bbR)$ and $E(\Phi(t)u)=E(u)$. 
\end{enumerate}
\end{theorem}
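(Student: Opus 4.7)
Overall strategy: define the extended flow $\Phi(t)$ on $H^2(\bbR)$ directly from the explicit formula of Theorem~\ref{Szegoline}. The right-hand side of that formula is built from the Hankel operator $H_u$ and related Hilbert-space objects, and the four properties become continuity and conservation statements about this construction, anchored to Pocovnicu's well-posedness on the dense subspace $H^2\cap W^{1/2,2}(\bbR)$.

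The first step is to verify that the formula produces an element of $H^2(\bbR)$ for every $u\in H^2(\bbR)$, and that it agrees with the Pocovnicu flow on $H^2\cap W^{1/2,2}$; this \emph{defines} the extension unambiguously. Property \emph{(iii)} is then immediate, because the formula depends on $t$ only through manifestly $t$-continuous ingredients built out of the Lax operator. Property \emph{(i)} will follow by density: mass is conserved by the classical flow (a one-line computation from the equation), so once the extended map is known to be continuous in the data the isometry persists on all of $H^2(\bbR)$.

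The substance is in \emph{(ii)}. The strategy is to track how every constituent of the explicit formula (Hankel operator, associated resolvents/semigroups, orthogonal projections, pairings against fixed vectors) depends on $u\in H^2(\bbR)$, and to verify that the composition is continuous as a map from $H^2$ to $H^2$, uniformly for $t$ in compact intervals. For \emph{(iv)}, I would approximate $u\in H^2(\bbR)\cap H^4(\bbR)$ by a sequence $u_n\in H^2\cap H^4\cap W^{1/2,2}$ with $u_n\to u$ in $H^2$ and $E(u_n)\to E(u)$ (e.g.\ by a Fourier truncation). Pocovnicu's theorem gives $\Phi(t)u_n\in W^{1/2,2}$ with $E(\Phi(t)u_n)=E(u_n)$, so $\Phi(t)u_n$ stays bounded in $H^4(\bbR)$. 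A weak-$*$ compactness argument in $H^4$, combined with the $H^2$-convergence from \emph{(ii)}, identifies the weak limit with $\Phi(t)u$; lower semicontinuity of $\norm{\cdot}_{H^4}$ and time-reversal symmetry of the flow then give $\Phi(t)u\in H^4(\bbR)$ and $E(\Phi(t)u)=E(u)$.

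The main obstacle is clearly \emph{(ii)}. Since $H^2(\bbR)$ is exactly the borderline regularity at which $\bbP(\abs{u}^2 u)$ ceases to be classically defined, no stability estimate for the equation itself is available. Moreover, the Hankel operator $H_u$ with $u\in H^2(\bbR)$ is typically unbounded, so continuity of the formula in the $H^2$-topology cannot simply be reduced to operator-norm continuity of $u\mapsto H_u$. One has to work with whatever functional-analytic objects (Cayley transforms, semigroup integrals, graph convergence) appear in the explicit formula and argue directly that these depend continuously on $u\in H^2(\bbR)$, taking particular care that the resolvents involved remain uniformly invertible along $H^2$-convergent sequences of symbols. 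This is the technical heart of the theorem.
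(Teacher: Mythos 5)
Your architecture has a circular dependency that the paper's proof is specifically designed to avoid. You propose to prove (ii) directly, by ``tracking how every constituent of the explicit formula depends on $u$,'' and then to deduce (i) by density from classical mass conservation. But tracking the constituents (operator-norm convergence of $L_{u_n}\to L_u$, strong resolvent convergence $H_{u_n}^2\to H_u^2$, hence convergence of $e^{-itH_{u_n}^2}u_n$ and of the resolvents $(A^*+L_{u_n}-z)^{-1}$) only yields pointwise convergence of $(\Phi(t)u_n)(z)$ for each $z\in\bbC_+$, i.e.\ \emph{weak} convergence in $H^2(\bbR)$. In the paper, the upgrade from weak to norm convergence comes precisely from (i): $\norm{\Phi(t)u_n}=\norm{u_n}\to\norm{u}=\norm{\Phi(t)u}$, and weak convergence plus convergence of norms gives strong convergence. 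In the $H^2$ picture the obstruction is visible: $\norm{\Phi(t)u}_{H^2}^2=\sum_{m\ge0}\abs{\jap{e^{-itH_u^2}u,\Sigma_u^m q}}^2$, and termwise convergence of this series gives no control of the tails. So your plan for (ii) fails as stated, and with it your derivation of (i). Relatedly, you defer the question of whether the formula lands in $H^2(\bbR)$ at all to ``a first step'' without a method; that step \emph{is} the isometry statement (Theorem~\ref{thm.d1}), proved in the paper via the Cayley transform $\Sigma_u$, the commutation relation $\Sigma_u^*\calH_u=\calH_u\Sigma_u$, and the Wold decomposition (Sections~\ref{sec.e}--\ref{sec.g}). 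It is the technical heart of the whole result, and no density or soft-continuity argument substitutes for it, since $H^2$ is exactly the regularity where the nonlinearity ceases to make classical sense. The correct logical order is: prove (i) for the formula-defined map first; then (ii) and (iii) follow by the weak-plus-norms argument.

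Your proposal for (iv) is a genuinely different route from the paper's and is plausible once (i)--(ii) are in place: Fourier truncation gives $u_n\to u$ in $H^4$ (by $L^4$-boundedness of the multipliers), conservation of $E$ for smooth data gives an $H^4$-bound on $\Phi(t)u_n$, weak compactness in $L^4$ plus the $H^2$-convergence identifies the limit and gives $E(\Phi(t)u)\le E(u)$, and the group property $\Phi(-t)\Phi(t)=I$ (which does extend to $H^2$ by density once (ii) holds for both $\pm t$) gives the reverse inequality. The paper instead characterizes membership in $H^4(\bbR)$ through the boundedness as $x\to0_+$ of $\partial_xJ(x,u)$ for $J(x,u)=\jap{(I+xH_u^2)^{-1}u,u}$, proves $J(x,\Phi(t)u)=J(x,u)$ for rational data via the Lax pair, and extends by (ii); this avoids invoking time reversal and yields the exact identity $E(\Phi(t)u)=E(u)$ in one stroke. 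Either route is acceptable, but both are contingent on repairing the (i)/(ii) circularity above.
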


An important ingredient of the proof is the fact that our explicit formula for the flow map $\Phi(t)$ makes sense on the whole Hardy class $H^2(\bbR)$.   
This forces us to consider unbounded Hankel operators with symbols in $H^2(\bbR)$ in Section~\ref{sec.c}. It is convenient to postpone the description of the structure of the rest of the paper until Section~\ref{sec.b7}.

\subsection{The unit circle case}
The Szeg\H{o} equation \eqref{a1} was originally introduced in  \cite{GG1,GG2}, where it was considered for functions $u$ defined on the unit circle $\bbT$. 
In this case, $\bbP$ is the orthogonal projection in $L^2(\bbT)$ onto the Hardy class $H^2(\bbT)$. 
In fact, \cite{GG1,GG2} provided the blueprint for the study of the Lax pair structure of \cite{OP1,OP2}. 
More precisely, in the unit circle case the Szeg\H{o} equation is completely integrable and possesses a Lax pair, 
which involves a Hankel operator in the Hardy class of the unit disk. There is an explicit formula \cite{GG} for the flow map of the cubic Szeg\H{o} equation on the unit circle, which involves Hankel operators and is based on the Lax pair structure. Using this formula, in \cite{GP} we have extended the flow map of the cubic Szeg\H{o} equation on the unit circle to the whole Hardy class $H^2(\bbT)$. 
We were also able to prove that this is sharp in the sense that the flow map does not extend to any Sobolev class with a negative smoothness exponent. We do not know whether Theorem~\ref{thm.a1} is sharp in the same sense.

%While in the unit circle case the formula for the flow map involves contractions (operators of norm $\leq1$), the formula for the flow map that we derive in this paper involves dissipative operators. The strategy of our proof is to reduce our considerations to contractions by means of the Cayley transform and to use some abstract results of \cite{GP} for contractions. 

\subsection{Methods used in the proof}
The proof of the explicit formula is based on the Lax pair structure of the cubic Szeg\H{o} equation and on the commutation properties of Hankel and Toeplitz operators with the infinitesimal generator of the Lax-Beurling semigroup in $H^2(\bbR)$, see Section~\ref{sec.b}. 
It has a different nature compared to the analogous formula \cite{GG} on the unit circle and is not reducible to it. 

This difference reflects the fact that despite the similarities between the cubic Szeg\H{o} equation on the unit circle and on the real line, the dynamics have some very different features. For example, on the real line, for a dense subset of rational initial data solutions,  have Sobolev norms tending to infinity as $t\to\infty$ \cite{GP-JEMS}, while on the circle, solutions with rational initial data are quasi-periodic \cite{GG}. 

Similarly, Theorem~\ref{thm.a1} is not reducible to the corresponding result of \cite{GP} (even though we are using some elements of \cite{GP}). 
The main new ingredients include spectral theory of dissipative operators and semigroup theory, combined with the Wold decomposition for contractions.

\subsection{Relation to other results.} Even for integrable PDEs, the existence of an explicit formula for the general solution is a rare phenomenon. However, such a formula  has been recently proved in the case of the Benjamin--Ono equation \cite{G}, both on the unit circle and on the line. Another example is the Calogero--Moser derivative nonlinear Schr\"odinger equation \cite{GL} and its periodic version, the Calogero--Sutherland equation \cite{B}, where the explicit formula was used to prove global wellposedness in critical regularity. In the latter case, the explicit formula was used to display growth of Sobolev norms for multisoliton solutions.
Indeed one expects that such explicit formulae can help understand the long time dynamics of integrable PDEs, which is still a widely open problem for many of these equations, in particular for the Szeg\H{o} equation on the line.\\
The sharp regularity wellposedness for other integrable Hamiltonian PDEs has recently stimulated the development of new methods based on the Lax pair structure, see in particular \cite{NRT} for the Davey--Stewartson equation, \cite{KT,HKV,KL}
for cubic nonlinear Schr\"odinger equations, \cite{HKNV} for the derivative nonlinear Schr\"odinger equation, and \cite{GKT,KLV} for the Benjamin--Ono equation.\\
Finally, let us mention that the cubic Szeg\H{o} equation on the line recently appeared \cite{C} in the description, through modified scattering, of the long time dynamics of the weakly dispersive nonlinear cubic Schr\"odinger equation on the plane $\bbR_x\times \bbR_y$,
$$i\partial_tu+\partial_x^2u-|D_y|u=\lambda |u|^2u\ .$$

\subsection{Notation}
The upper half-plane is denoted by $\bbC_+=\{z\in\bbC: \Im z>0\}$, and the unit disk by $\bbD=\{\zeta\in\bbC: \abs{\zeta}<1\}$. 
The Hardy class in the unit disk $H^2(\bbT)$ is defined as the set of all functions $f$ analytic in the unit disk and having the finite norm
\[
\norm{f}_{H^2(\bbD)}^2:=\sup_{r<1}\int_{-\pi}^\pi\abs{f(re^{i\theta})}^2\frac{d\theta}{2\pi}. 
\]
We denote by $S$  the standard shift operator in $H^2(\bbT)$, acting by $Sf(\zeta)=\zeta f(\zeta)$, and let $S^*$ is its adjoint. 

We denote by $\calR$ the class of rational functions in $H^2(\bbR)$; explicitly, elements of $\calR$ are functions of the form $p(z)/q(z)$, where $p$ and $q$ are polynomials with $\deg p<\deg q$ and all zeros of $q$ are in the lower half-plane $\Im z<0$.
For any $a\in\bbC_+$, we denote by $v_a$ the reproducing kernel of $H^2(\bbR)$:
$$
v_a(x)=\frac1{2\pi i}\frac1{\overline{a}-x}, 
$$
so that $f(a)=\jap{f,v_a}$ for any $f\in H^2(\bbR)$. Observe that 
$$
\norm{v_a}^2=\jap{v_a,v_a}=v_a(a)=\frac1{4\pi \Im a}.
$$
The Fourier transform on the real line is defined by 
$$
\wh f(\xi)=\int_{-\infty}^\infty f(x)e^{-ix\xi}dx\, .
$$
The identity operator is denoted by $I$. For elements $\psi,\varphi$ in a Hilbert space, we denote by $\jap{\cdot,\psi}\varphi$ the rank one operator in this space acting as $f\mapsto\jap{f,\psi}\varphi$. 

We freely use the operator theoretic background collected in Appendix~A.

\subsection{Acknowledgements}
The authors are grateful to Sandrine Grellier for useful discussions.

%%%%%%%%%%%%%%%%%%%%%%%%%%%%%%%%%%%%%%%%%%%%%%%%%
%%%%%%%%%%%%%%%%%%%%%%%%%%%%%%%%%%%%%%%%%%%%%%%%%
\section{Explicit formula for the flow map $\Phi(t)$}\label{sec.b}
%%%%%%%%%%%%%%%%%%%%%%%%%%%%%%%%%%%%%%%%%%%%%%%%%
%%%%%%%%%%%%%%%%%%%%%%%%%%%%%%%%%%%%%%%%%%%%%%%%%

The main result of this section is Theorem~\ref{Szegoline}, which gives an explicit formula for the flow map $\Phi(t)$.
We start with some preliminaries, introducing the objects that enter this explicit formula.

\subsection{The Lax-Beurling semigroup}
In the space $H^2(\bbR)$, we consider the Lax-Beurling semigroup $S_\tau$, 
$$
(S_\tau f)(z)=e^{i\tau z}f(z), \quad z\in\bbC_+, \quad \tau>0,
$$
and its adjoint $S_\tau^*$. We denote by $A$ the infinitesimal generator of $S_\tau$; it has the domain 
$$
\Dom A=\{f\in H^2(\bbR): \tfrac{d}{d\xi}\widehat f(\xi)\in L^2(0,\infty),\  \widehat f(0)=0\}\, , 
$$
and acts by $Af(z)=zf(z)$ for $f\in \Dom A$. 
The adjoint $A^*$ has the domain 
$$
\Dom A^*=\{f\in H^2(\bbR): \tfrac{d}{d\xi}\widehat f(\xi)\in L^2(0,\infty)\}\, .
$$
\begin{remark*}
One can understand the difference between $\Dom A$ and $\Dom A^*$ by looking at the Fourier images  $\wh S_\tau$ and $\wh S_\tau^*$, which act as right and left shifts on $L^2(0,\infty)$. Their generators correspond to differentiation. Now observe that if $f$ is smooth on $[0,\infty)$, extended by zero to $(-\infty,0)$,  and $\tau\to0$,  then $\frac1{\tau}(f(x+\tau)-f(x))$  converges to $f'(x)$ for all $x>0$ , while 
$\frac1{\tau}(f(x-\tau)-f(x))$ has a $\delta$-type singularity near $x=0$ unless $f(0)=0$.

We also illustrate the difference between $\Dom A$ and $\Dom A^*$ by looking at the set of rational functions $\calR\subset H^2(\bbR)$. We have $\calR\subset\Dom A^*$, while $\calR\not\subset\Dom A$; indeed, $f\in\calR$ belongs to $\Dom A$ if and only if $f(x)=O(1/x^2)$ as $x\to\infty$. 
\end{remark*}

For any point $z_0\in\bbC_+$, the operators $A-\overline{z_0}$ and $A^*-z_0$ are boundedly invertible; we will need formulas for the inverses:
\begin{equation}
(A-\overline{z_0})^{-1}f(z)=(z-\overline{z_0})^{-1}f(z), 
\quad
(A^*-z_0)^{-1}f(z)=\frac{f(z)-f(z_0)}{z-z_0}.
\label{b1a}
\end{equation}

\subsection{Hankel and Toeplitz operators}
For a \emph{symbol} $u\in H^2(\bbR)\cap H^\infty(\bbR)$ we define the anti-linear Hankel operator $H_u$ on $H^2(\bbR)$ by
$$
H_u f=\bbP(u\overline{f}), \quad f\in H^2(\bbR);
$$
the conjugate of $f$ is always taken on the real line.
It is clear that the boundedness of $u$ ensures that the operator $H_u$ is bounded on $H^2(\bbR)$. Observe that $H_u$ satisfies the symmetry relation
$$
\jap{H_uf,g}_{H^2}=\jap{\bbP(u\overline{f}),g}_{H^2}=\jap{u\overline{f},g}_{L^2}=\jap{u\overline{g},f}_{L^2}=\jap{H_ug,f}_{H^2}.
$$
We will often use the square $H_u^2$;  while $H_u$ is anti-linear, its square is linear. Moreover, by the above symmetry relation we have
$$
\jap{H_u^2f,f}=\jap{H_u(H_uf),f}=\jap{H_uf,H_uf}\geq0,
$$
and so $H_u^2$ is a self-adjoint positive semi-definite operator.

For a symbol $\varphi\in L^\infty(\bbR)$, the Toeplitz operator $T_\varphi$ on $H^2(\bbR)$ is defined by 
$$
T_\varphi f=\bbP(\varphi f), \quad f\in H^2(\bbR).
$$
Again, the boundedness of $\varphi$ ensures that $T_\varphi$ is bounded on $H^2(\bbR)$. 

\subsection{Lax pair for the cubic Szeg\H{o} equation}

We use the following result of \cite{OP1}:

\begin{theorem}\cite{OP1}
\label{thm.b1}
Let $u_t(x)=u(x,t)$ be a solution to the cubic Szeg\H{o} equation with $u_0\in W^{\frac12,2}(\bbR)\cap H^2(\bbR)$. Then the Hankel operator $H_{u_t}$ satisfies the equation
\begin{equation}
\frac{d}{dt}H_{u_t}=[B_{u_t}, H_{u_t}]\ ,
\label{b3}
\end{equation}
where $B_{u_t}$ is the skew-adjoint operator
$$
B_u=-iT_{|u|^2}+\frac i2H_u^2\ .
$$
\end{theorem}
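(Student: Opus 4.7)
The plan is to convert the Lax pair equation into a pointwise algebraic identity on $H^2(\bbR)$ that can be verified via the orthogonal decomposition $L^2(\bbR)=H^2(\bbR)\oplus\overline{H^2(\bbR)}$. As a preliminary, observe that $u\mapsto H_u$ is $\bbC$-linear and that $H_{\bbP v}f=H_v f$ for every $v\in L^2(\bbR)$, since $(v-\bbP v)\bar f$ lies in $\overline{H^2}$ and is annihilated by $\bbP$. Substituting $\partial_t u_t = -i\bbP(|u_t|^2u_t)$ from \eqref{a1} then gives
\begin{equation*}
\tfrac{d}{dt}H_{u_t}f \;=\; -i\,\bbP(|u_t|^2u_t\bar f).
\end{equation*}

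Next, I would expand $[B_u,H_u]f = B_uH_uf - H_uB_uf$. Since $B_u$ is linear while $H_u$ is antilinear (so that $H_u$ conjugates complex scalars as it passes them), a short computation gives
\begin{equation*}
[B_u,H_u]f \;=\; -i\bigl(T_{|u|^2}H_u + H_uT_{|u|^2} - H_u^3\bigr)f.
\end{equation*}
Combining the two displays shows that \eqref{b3} is equivalent to the trilinear Hankel identity
\begin{equation*}
\bbP(|u|^2u\bar f) \;=\; T_{|u|^2}H_uf + H_uT_{|u|^2}f - H_u^3 f,\qquad f\in H^2(\bbR).
\end{equation*}

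To prove this identity, I would decompose $u\bar f = H_uf + r$ with $r=(I-\bbP)(u\bar f)\in\overline{H^2}$; multiplying by $|u|^2$ and projecting yields $\bbP(|u|^2u\bar f) = T_{|u|^2}H_uf + \bbP(|u|^2 r)$. Writing $J$ for complex conjugation and using the relation $J(I-\bbP)=\bbP J$, one finds $\bar r = T_{\bar u}f\in H^2$, hence $|u|^2 r = u\cdot\overline{uT_{\bar u}f}$ and $\bbP(|u|^2 r) = H_u(uT_{\bar u}f)$. The same decomposition applied to $\bar u f$ in place of $u\bar f$ yields the Toeplitz factorization
\begin{equation*}
T_{|u|^2}f \;=\; uT_{\bar u}f + H_u^2 f,
\end{equation*}
so that $H_u(uT_{\bar u}f) = H_u\bigl(T_{|u|^2}f - H_u^2 f\bigr) = H_uT_{|u|^2}f - H_u^3 f$ by antilinearity, and the trilinear identity follows.

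The main obstacle is not the algebra itself but the bookkeeping of linear versus antilinear operators, together with the verification that all intermediate products and projections are meaningful at the assumed regularity $u_0\in W^{\frac12,2}\cap H^2$. The latter is routine: one first proves the identity for $u\in H^\infty\cap H^2$ (where every product is trivially in $H^2$) and then passes to the limit using the boundedness of $H_u$ and $T_{|u|^2}$ on $H^2(\bbR)$ and the density of smooth, rapidly decaying symbols.
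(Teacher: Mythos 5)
The paper does not prove this statement: it is quoted from \cite{OP1} (the analogous circle-case identity is in \cite{GG1}), so there is no internal proof to compare against. Your derivation is essentially the standard one and the algebra is correct: the reduction of \eqref{b3} to the trilinear identity $\bbP(|u|^2u\bar f)=T_{|u|^2}H_uf+H_uT_{|u|^2}f-H_u^3f$ is right (including the sign bookkeeping from the antilinearity of $H_u$, which turns $H_u(-iT_{|u|^2}f)$ into $+iH_uT_{|u|^2}f$, so that the two $\tfrac{i}{2}H_u^3f$ terms add rather than cancel); the relation $\overline{(I-\bbP)(u\bar f)}=T_{\bar u}f$ is the correct use of $J(I-\bbP)=\bbP J$; and the factorization $uT_{\bar u}f=T_uT_{\bar u}f=T_{|u|^2}f-H_u^2f$ you invoke is exactly the identity established inside the proof of Lemma~\ref{bracket}. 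Two caveats. First, a small imprecision: $(v-\bbP v)\bar f$ is a product of two anti-analytic functions and so lies in $\overline{H^1}$ rather than $\overline{H^2}$; what matters is only that its Fourier transform is supported in $(-\infty,0]$, so that $\bbP$ annihilates it in the sense used after \eqref{c1}. Second, and more substantively, your closing limiting argument appeals to ``the boundedness of $T_{|u|^2}$ on $H^2(\bbR)$'', but at the stated regularity $u_0\in W^{1/2,2}(\bbR)\cap H^2(\bbR)$ the symbol $u$ need not be bounded, and a Toeplitz operator is bounded essentially only for $L^\infty$ symbols, so $T_{|u|^2}$ with $|u|^2\in L^2$ is in general unbounded; likewise $\frac{d}{dt}H_{u_t}=H_{\partial_tu_t}$ presupposes differentiability of $t\mapsto u_t$ in a norm strong enough to commute with $\bbP$. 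The passage to the endpoint $s=1/2$ therefore needs a more careful formulation (quadratic forms, or the integrated/Duhamel form of \eqref{b3}); for $s>1/2$, where $u$ is bounded, your argument is complete, and that is the only regime the present paper actually uses (see the remark after Theorem~\ref{Szegoline}).
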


\subsection{The functional $I_+$}
For $f\in\Dom A^*$, one may define
$$I_+(f):=\wh f(0^+)\ .$$
For $\eps>0$, let $\chi_\eps\in H^2(\bbR)$ be the \emph{approximate identity}
$$\chi_\eps (x):=\frac{1}{1-i\eps x}\ .$$
Then by Plancherel's theorem we have 
$$
\jap{f,\chi_\eps}=\frac1{\eps}\int_0^\infty \wh f(\xi)e^{-\xi/\eps}d\xi,
$$
and therefore 
\begin{equation}
I_+(f)=\lim_{\eps\to0}\jap{f,\chi_\eps}
\label{b4}
\end{equation}
for any $f\in\Dom A^*$. 

Here is another way of expressing $I_+$ on the domain of $A^*$. Fix $z\in\bbC_+$ and write an arbitrary element in $\Dom A^*$ as $(A^*-z)^{-1}f$ for $f\in H^2(\bbR)$; then by \eqref{b1a},
\begin{align}
I_+((A^*-z)^{-1}f)
&=x
\lim_{\eps\to0}\jap{(A^*-z)^{-1}f,\chi_\eps}
=
\lim_{\eps\to0}\jap{f,(A-\overline{z})^{-1}\chi_\eps}
\notag
\\
&=
2\pi i\jap{f,v_z}=2\pi if(z).
\label{b4a}
\end{align}

\subsection{Explicit formula for the flow map}
We fix $t>0$ once and for all. 
For a symbol $u\in H^2(\bbR)\cap H^\infty(\bbR)$ we introduce the notation 
\begin{equation}
L_u(t)=
\frac{1}{2\pi}\int_0^t\jap{\cdot,\mathrm{e}^{-is H_{u}^2}u}\mathrm{e}^{-is H_{u}^2}u\, ds\, .
\label{d1}
\end{equation}
Here the integrand is a self-adjoint rank one operator, and therefore $L_u(t)$ is a self-adjoint trace class operator. 
%%%%%%%%%%%%%%%%%%%
\begin{theorem}\label{Szegoline}
%%%%%%%%%%%%%%%%%%%
Let $\Phi(t)$ be the flow map defined on $H^2(\bbR)\cap W^{s,2}(\bbR)$ with $s>1/2$. 
Then, for every $z\in \bbC_+ $ and every $u\in H^2(\bbR)\cap W^{s,2}(\bbR)$, 
\begin{equation}
\boxed{
(\Phi(t)u_0)(z)=\frac{1}{2\pi i}I_+\left [\left (A^*+L_{u_0}(t)- zI \right )^{-1}\mathrm{e}^{-itH_{u_0}^2}u_0\right ] .}
\label{d1a}
\end{equation}
\end{theorem}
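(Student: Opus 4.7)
The plan is to verify that the right-hand side of the boxed formula --- denoted $w(t,z)$ --- solves the same Cauchy problem as $\Phi(t)u_0$ in $H^2(\bbR)\cap W^{s,2}(\bbR)$ with $s>1/2$, whereupon uniqueness from \cite{OP1} forces $w=\Phi(t)u_0$. The initial condition $w(0,z)=u_0(z)$ is immediate: $L_{u_0}(0)=0$ (empty integral), $\mathrm{e}^{-i\cdot 0\cdot H_{u_0}^2}u_0=u_0$, and \eqref{b4a} gives $I_+((A^*-zI)^{-1}u_0)=2\pi i u_0(z)$.

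Next, I would differentiate $w$ in $t$. Writing $\psi_t:=\mathrm{e}^{-itH_{u_0}^2}u_0$ and $R_t(z):=(A^*+L_{u_0}(t)-zI)^{-1}$, the product rule combined with $\dot L_{u_0}(t)=\frac{1}{2\pi}\langle\cdot,\psi_t\rangle\psi_t$ and $\partial_t\psi_t=-iH_{u_0}^2\psi_t$ yields
\[
i\partial_t w(t,z)=-\tfrac{i}{2\pi}\langle R_t(z)\psi_t,\psi_t\rangle\, w(t,z) - \tfrac{1}{2\pi}I_+\bigl[R_t(z)H_{u_0}^2\psi_t\bigr].
\]
Using the decomposition $T_{|u|^2}=T_uT_u^*+H_u^2$ (from $I=\bbP+(I-\bbP)$) together with $T_uT_u^*u=u\,\bbP(|u|^2)$, the Szeg\H{o} equation at a point $z\in\bbC_+$ takes the form $i\partial_tu_t(z)=u_t(z)\bbP(|u_t|^2)(z)+H_{u_t}^2u_t(z)$. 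Comparing this with the formula for $i\partial_tw$ reduces the theorem to the two identities
\begin{align*}
\mathrm{(A)}\quad & \tfrac{1}{2\pi i}\langle R_t(z)\psi_t,\psi_t\rangle=\bbP(|w(t,\cdot)|^2)(z),\\
\mathrm{(B)}\quad & \tfrac{1}{2\pi i}I_+\bigl[R_t(z)H_{u_0}^2\psi_t\bigr]=H_{w(t,\cdot)}^2\,w(t,z),
\end{align*}
valid for all $z\in\bbC_+$.

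Both (A) and (B) hold at $t=0$ by direct computation: (A) follows from the Cauchy-transform representation $\bbP(|u_0|^2)(z)=\frac{1}{2\pi i}\int\frac{|u_0(x)|^2}{x-z}\,dx$ combined with \eqref{b1a} and the fact that $\int\overline{u_0(x)}/(x-z)\,dx=0$ for $z\in\bbC_+$ (since $\overline{u_0}$ extends analytically to the lower half-plane); (B) is the identity $I_+((A^*-zI)^{-1}g)=2\pi i g(z)$ from \eqref{b4a} applied to $g=H_{u_0}^2u_0\in H^2(\bbR)$. To propagate (A) and (B) to general $t$, I would invoke the Lax pair of Theorem~\ref{thm.b1}: letting $V(t)$ be the unitary propagator solving $\dot V=B_{u_t}V$, $V(0)=I$, Theorem~\ref{thm.b1} yields $H_{u_t}^2=V(t)H_{u_0}^2V(t)^*$, and rewriting the Szeg\H{o} equation as $\partial_tu_t=B_{u_t}u_t-\tfrac{i}{2}H_{u_t}^2u_t$ gives the representation $u_t=V(t)\mathrm{e}^{-itH_{u_0}^2/2}u_0$.

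The principal obstacle is translating this Lax-pair information into the dressed resolvent $R_t(z)$. I expect the decisive intermediate identity to be the commutator relation
\[
[A^*,H_u]f=-\tfrac{i}{2\pi}\,\overline{I_+(f)}\,u,\qquad f\in\Dom A^*,
\]
derived from the Fourier-side description $A^*\leftrightarrow i\partial_\xi$ on $L^2(0,\infty)$ together with $\widehat{H_uf}(\xi)=\tfrac{1}{2\pi}\int_0^\infty\hat u(\xi+\eta)\overline{\hat f(\eta)}\,d\eta$, coupled with the Plancherel-based formula $I_+(H_ug)=\langle u,g\rangle$. Time-integrating this antilinear rank-one commutator against $\psi_s$ converts it --- via $I_+(H_u\,\cdot)=\langle u,\cdot\rangle$ --- into the self-adjoint rank-one increment $\tfrac{1}{2\pi}\langle\cdot,\psi_s\rangle\psi_s$ generating $\dot L_{u_0}(s)$; this is the mechanism by which the additive perturbation $L_{u_0}(t)$ of $A^*$ accumulates the noncommutativity of $A^*$ with the Hankel--Toeplitz dynamics. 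Making this precise, in particular controlling the unbounded operators $A^*$, $H_{u_0}^2$ and the unbounded functional $I_+$ on the appropriate domains, will constitute the bulk of the technical work.
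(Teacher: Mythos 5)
Your overall architecture runs in the direction opposite to the one that is actually feasible here, and this creates a genuine gap rather than merely ``technical work''. You reduce the theorem to the identities (A) and (B), which must hold for all $t>0$ and which involve the unknown function $w(t,\cdot)$ nonlinearly: (A) requires computing $\bbP(|w(t,\cdot)|^2)$, and (B) requires identifying the Hankel operator with symbol $w(t,\cdot)$. But knowing $H_{w(t,\cdot)}$ --- equivalently, knowing that $H_{w(t,\cdot)}=U(t)H_{u_0}U(t)^*$ for the propagator $U(t)$ of the Lax pair --- is tantamount to knowing $w(t,\cdot)=u_t$, i.e.\ to the theorem itself. Your only proposed tool for propagating (A) and (B) from $t=0$ is Theorem~\ref{thm.b1}, but that Lax pair is a statement about the actual solution $u_t$, not about the candidate $w$ defined by the boxed formula; you cannot apply it to $w$ without first proving $w=u_t$. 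So the reduction is circular, or at best replaces the theorem by statements that are no easier. There is also a factor error in your displayed formula for $i\partial_t w$: since $\partial_t\psi_t=-iH_{u_0}^2\psi_t$, the second term should be $+\tfrac{1}{2\pi i}I_+\bigl[R_t(z)H_{u_0}^2\psi_t\bigr]$ rather than $-\tfrac{1}{2\pi}I_+\bigl[R_t(z)H_{u_0}^2\psi_t\bigr]$ (these differ by a factor $-i$), and with your stated (B) the comparison with the Szeg\H{o} equation would then produce $-iH_w^2w$ instead of $+H_w^2w$.

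The paper avoids this trap by arguing forward from the known solution: it introduces the unitary propagator $U(t)$ of the Lax pair, proves $U(t)^*u_t=\mathrm{e}^{-i\frac t2H_{u_0}^2}u_0$, and --- this is the decisive step you are missing --- computes the conjugated generator
\begin{equation*}
U(t)^*A^*U(t)=\mathrm{e}^{i\frac t2H_{u_0}^2}\bigl(A^*+L_{u_0}(t)\bigr)\mathrm{e}^{-i\frac t2H_{u_0}^2}
\end{equation*}
by integrating the ODE $\frac{d}{dt}U(t)^*A^*U(t)=U(t)^*[A^*,B_{u_t}]U(t)$, whose right-hand side is evaluated by the commutator Lemma~\ref{bracket}. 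The boxed formula then falls out of \eqref{b4a} applied to $f=u_t$, with no need for (A) or (B). Your commutator $[A^*,H_u]f=-\tfrac{i}{2\pi}\overline{I_+(f)}\,u$ is correct and is the same algebraic mechanism as Lemmas~\ref{bracket0} and~\ref{bracket} (it is exactly where the rank-one increments building $L_{u_0}(t)$ come from), so you have identified the right ingredient --- but it must be deployed on $A^*$ conjugated along the flow of the actual solution $u_t$, not on the candidate $w$.
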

Some remarks are in order concerning the meaning of formula \eqref{d1a}. Since $L_{u_0}(t)$ is bounded and self-adjoint, the operator $A+L_{u_0}(t)$, defined on $\Dom A$, is maximal dissipative (see e.g. \cite[Section 3.1]{Davies}). It follows that $-(A^*+L_{u_0}(t))$ is also maximal dissipative and therefore the operator $A^*+L_{u_0}(t)- zI$ in \eqref{d1a} has a bounded inverse. Furthermore,
$$
\Ran (A^*+L_{u_0}(t)- zI)^{-1}=\Dom(A^*+L_{u_0}(t)-zI)=\Dom A^*
$$ 
and so the functional $I_+$ in \eqref{d1a} is well-defined. 

We also observe that in the context of this theorem, we require $s>1/2$ because in the course of the proof we need the boundedness of $u$. 

%%%%%%%%%%%%%%%%%%%%%%%%%%%%%%%%%%%%
\subsection{The structure of the rest of the paper}\label{sec.b7}
%%%%%%%%%%%%%%%%%%%%%%%%%%%%%%%%%%%%

In the rest of this section, we prove Theorem~\ref{Szegoline}; its proof is a calculation based on the Lax pair formulation \eqref{b3} of the cubic Szeg\H{o} equation.

Our next step is to set up the map $\Phi(t)$ on the whole Hardy class $H^2(\bbR)$. Since formula \eqref{d1a} for $\Phi(t)$ involves a Hankel operator, we must first extend the notion of Hankel operators $H_u$ to arbitrary symbols $u\in H^2(\bbR)$. This is done in Section~\ref{sec.c}. After this, in Section~\ref{sec.d} we define the map $\Phi(t)$ on $H^2(\bbR)$ by the same formula  \eqref{d1a}xx. Our key result is Theorem~\ref{thm.d1} which states that thus defined, $\Phi(t)$ preserves the $H^2(\bbR)$-norm. Assuming this result, it is very easy to complete the proof of Theorem~\ref{thm.a1}; we do this in Section~\ref{sec.d}. In Sections~\ref{sec.e}--\ref{sec.g} we prove Theorem~\ref{thm.d1}; the proof involves passing to the Cayley transform of $A+L_{u_0}(t)$ and using some abstract operator theoretic theorem of \cite{GP}.

\subsection{Two auxiliary identities}
The following lemma can be found in \cite{Sun,GL}, but for completeness we give an alternative proof below.
\begin{lemma}\label{bracket0}
Let $b\in L^2(\bbR)\cap L^\infty(\bbR)$. Then for every $f\in \Dom A^*$, we have $T_bf\in \Dom A^*$ and
$$[A^*,T_b]f= \frac{i}{2\pi}I_+(f)\bbP b\ .$$
\end{lemma}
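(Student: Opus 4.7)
The plan is to reduce the statement to a computation with the resolvent $(A^*-z_0)^{-1}$ for an auxiliary $z_0\in\bbC_+$, exploiting the explicit formula \eqref{b1a}. The central preliminary step is the following kernel identity: for every $h\in L^2(\bbR)$,
\[
\bbP\!\left(\frac{h(\cdot)}{\cdot-z_0}\right)\!(z)
\;=\;\frac{\bbP h(z)-\bbP h(z_0)}{z-z_0}
\;=\;(A^*-z_0)^{-1}\bbP h\,(z).
\]
To prove this I would decompose $h=\bbP h+(I-\bbP)h$ and examine each piece. Because $(I-\bbP)h$ is the boundary value on $\bbR$ of a function analytic in $\bbC_-$, and $1/(z-z_0)$ is likewise analytic in $\bbC_-$ (its only pole lies in $\bbC_+$), the product $(I-\bbP)h(x)/(x-z_0)$ again belongs to the orthogonal complement of $H^2(\bbR)$ in $L^2(\bbR)$ and is annihilated by $\bbP$. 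For the $\bbP h$ piece the algebraic splitting
\[
\frac{\bbP h(x)}{x-z_0}\;=\;\frac{\bbP h(x)-\bbP h(z_0)}{x-z_0}+\frac{\bbP h(z_0)}{x-z_0}
\]
isolates an $H^2$-function (the first summand is precisely $(A^*-z_0)^{-1}\bbP h$ by \eqref{b1a}, the apparent singularity at $z_0$ being removable) plus a constant multiple of $1/(x-z_0)$, which once more lies in the orthogonal complement of $H^2(\bbR)$ and is killed by $\bbP$.

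With the kernel identity in hand, take an arbitrary $f\in H^2(\bbR)$. Writing $(A^*-z_0)^{-1}f(x)=(f(x)-f(z_0))/(x-z_0)$ and applying the kernel identity with $h=bf\in L^2(\bbR)$ and with $h=b\in L^2(\bbR)$,
\[
T_b(A^*-z_0)^{-1}f
\;=\;\bbP\!\left(\frac{bf-f(z_0)b}{\cdot-z_0}\right)
\;=\;(A^*-z_0)^{-1}\!\bigl[T_b f-f(z_0)\bbP b\bigr].
\]
Since $(A^*-z_0)^{-1}:H^2(\bbR)\to\Dom A^*$ is a bijection, this identity already shows that $T_b$ maps $\Dom A^*$ into itself. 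Applying $(A^*-z_0)$ to both sides yields
\[
(A^*-z_0)T_b g\;=\;T_b f-f(z_0)\bbP b,\qquad g:=(A^*-z_0)^{-1}f.
\]
Expanding $T_b f=T_b A^*g-z_0 T_b g$ on the right, the $z_0 T_b g$ terms cancel and one is left with $[A^*,T_b]g=-f(z_0)\bbP b$. Formula \eqref{b4a} identifies $f(z_0)=I_+(g)/(2\pi i)=-(i/2\pi)I_+(g)$, whence $[A^*,T_b]g=(i/2\pi)I_+(g)\bbP b$, which is the claim.

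The only real (and modest) obstacle is the careful justification of the kernel identity, since it requires recognising that $(I-\bbP)h/(x-z_0)$ stays in the orthogonal complement of $H^2(\bbR)$ and that $\bbP h(z_0)/(x-z_0)$ does as well. Both reduce to the $\bbC_{\pm}$-analyticity characterisation of $\bbP$ and $I-\bbP$ combined with the fact that $1/|x-z_0|$ is bounded on $\bbR$, so that all the quotients in question remain in $L^2(\bbR)$. After that the remaining computation is a routine resolvent/commutator manipulation, and the particular choice of $z_0\in\bbC_+$ plays no role in the final identity.
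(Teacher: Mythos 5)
Your proof is correct, and it reaches the paper's key intermediate identity \eqref{b5} by a genuinely different route. The paper splits the symbol: it first treats analytic symbols $b\in H^2\cap H^\infty$ (where $T_b$ is multiplication and the commutator with $(A^*-i)^{-1}$ is computed directly from \eqref{b1a}), then anti-analytic symbols $\overline{b}$ with $b\in H^\infty$ (where the commutator vanishes, checked by pairing with reproducing kernels), and finally glues these for general $b\in L^2\cap L^\infty$ via Poisson regularisation $b_\eps=b_{\eps,+}+b_{\eps,-}$ and a weak-operator-topology limit. You instead prove a single division identity, $\bbP\bigl(h/(\cdot-z_0)\bigr)=(A^*-z_0)^{-1}\bbP h$ for all $h\in L^2(\bbR)$, by decomposing $h$ (rather than the symbol) into its $H^2$ and $(H^2)^\perp$ parts and observing that both $(I-\bbP)h/(\cdot-z_0)$ and $\bbP h(z_0)/(\cdot-z_0)$ stay in $(H^2)^\perp$ because $1/(z-z_0)$ is bounded and analytic in $\bbC_-$. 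Applied to $h=bf-f(z_0)b$ this yields \eqref{b5} in one stroke, with no approximation argument and no case distinction on the symbol; the price is the (correctly handled) verification that multiplication by an $H^\infty(\bbC_-)$ function preserves $(H^2)^\perp$. The final unwinding — applying the resolvent identity to $f=(A^*-z_0)g$ and invoking \eqref{b4a} to identify $f(z_0)=\tfrac{1}{2\pi i}I_+(g)$ — coincides with the paper's Step 4, and your sign bookkeeping is right. Both arguments rest on the same two structural facts (the explicit resolvent formula \eqref{b1a} and the analyticity characterisation of $\Ran\bbP$ and $\Ker\bbP$); yours is arguably more economical, while the paper's symbol decomposition isolates the two mechanisms (rank-one defect from the analytic part, exact commutation from the anti-analytic part) that reappear elsewhere in the text.
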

\begin{proof}
\emph{Step 1:}
Assume $b\in H^2(\bbR)\cap H^\infty(\bbR)$. Let us prove that for every $f\in H^2(\bbR)$ and for every $z\in\bbC_+$, 
$$
[T_b,(A^*-i)^{-1}]f(z)=-f(i)(A^*-i)^{-1}b(z).
$$
In this case for every $z\in\bbC_+$, we have $T_bf(z)=b(z)f(z)$ and using the explicit formula \eqref{b1a} for the resolvent of $A^*$, we find
\begin{align*}
T_b(A^*-i)^{-1}f(z)&-(A^*-i)^{-1}T_bf(z)
=
b(z)\frac{f(z)-f(i)}{z-i}-\frac{b(z)f(z)-b(i)f(i)}{z-i}
\\
&=
-\frac{b(z)-b(i)}{z-i}f(i)=-f(i)(A^*-i)^{-1}b(z),
\end{align*}
as claimed. 

\emph{Step 2:} Assume $b\in H^\infty(\bbR)$. Let us prove that 
$$
[T_{\overline{b}},(A^*-i)^{-1}]=0.
$$
Indeed, for any $f\in H^2(\bbR)$ and for any $z\in\bbC_+$, we find
\begin{align*}
\jap{[T_{\overline{b}},(A^*-i)^{-1}]f,v_z}
&=
\jap{T_{\overline{b}}(A^*-i)^{-1}f,v_z}
-
\jap{(A^*-i)^{-1}T_{\overline{b}}f,v_z}
\\
&=
\jap{(A^*-i)^{-1}f,bv_z}
-
\jap{T_{\overline{b}}f,(A+i)^{-1}v_z}
\\
&=
\jap{f,(A+i)^{-1}bv_z}
-
\jap{f,b(A+i)^{-1}v_z}=0,
\end{align*}
as claimed. 

\emph{Step 3:}
Let $b\in L^2(\bbR)\cap L^\infty(\bbR)$. Let us prove that for any $f\in H^2(\bbR)$, 
\begin{equation}
[T_b,(A^*-i)^{-1}]f=-f(i)(A^*-i)^{-1}\bbP b.
\label{b5}
\end{equation}
For $\eps>0$, let $b_\eps$ be the convolution of $b$ with the Poisson kernel, 
$$
b_\eps(x)=\frac{\eps}{\pi}\int_{-\infty}^\infty \frac{b(y)}{(x-y)^2+\eps^2}dy. 
$$
Then for each $\eps>0$, we have $b_\eps=b_{\eps,+}+b_{\eps,-}$, where $b_{\eps,+}=\bbP b_\eps\in H^2(\bbR)\cap H^\infty(\bbR)$ and $\overline{b_{\eps,-}}\in H^\infty(\bbR)$. Combining the two previous steps, we find 
\begin{equation}
[T_{b_\eps},(A^*-i)^{-1}]f=-f(i)(A^*-i)^{-1}\bbP b_\eps.
\label{b6}
\end{equation}
As $\eps\to0$, we have $\norm{\bbP b_\eps-\bbP b}_{H^2}\to0$ and $T_{b_\eps}\to T_b$ in the weak operator topology. Passing to the limit as $\eps\to0_+$ in \eqref{b6}, we arrive at \eqref{b5}.

\emph{Step 4:}
Let us apply \eqref{b5} to the vector $f=(A^*-i)g$ for some $g\in \Dom A^*$. We find
$$
T_bg-(A^*-i)^{-1}T_b(A^*-i)g=-f(i)(A^*-i)^{-1}\bbP b,
$$
and by \eqref{b4a} 
$$
f(i)=\frac1{2\pi i}I_+(g).
$$
It follows that $T_bg\in\Dom A^*$ and 
$$
A^*T_bg-T_bA^*g=
(A^*-i)T_bg-T_b(A^*-i)g=-\frac1{2\pi i}I_+(g)\bbP b,
$$
as required. 
\end{proof}
Below $B_u=-iT_{|u|^2}+\frac i2H_u^2$ is the operator from Theorem~\ref{thm.b1}. 

\begin{lemma}\label{bracket}
Let $u\in H^2(\bbR)\cap H^\infty(\bbR)$; then for every $f\in\Dom A^*$, we have
$B_uf\in\Dom A^*$ and $H_u^2f\in\Dom A^*$ and 
$$[{A^*},B_u]f=-\frac{i}{2}[{A^*},H_u^2]f+\frac{1}{2\pi}\jap{f, u}u\, .$$
\end{lemma}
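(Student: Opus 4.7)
The natural approach is to reduce the claim to Lemma~\ref{bracket0} via the Toeplitz--Hankel factorisation
$$
T_{|u|^2}=T_uT_{\overline u}+H_u^2,
$$
valid on $H^2(\bbR)$ for $u\in H^2(\bbR)\cap H^\infty(\bbR)$ (a short unfolding of the projections, using $\overline{\bbP g}=(I-\bbP)\overline g$). Substituting this identity gives $B_u=-iT_uT_{\overline u}-\tfrac{i}{2}H_u^2$, so the statement of the lemma reduces to the single commutator identity
$$
[A^*,T_uT_{\overline u}]f=\frac{i}{2\pi}\jap{f,u}\,u, \qquad f\in\Dom A^*.
$$

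Domain invariance is then immediate from Lemma~\ref{bracket0}: since $u$, $\overline u$ and $|u|^2$ all lie in $L^2(\bbR)\cap L^\infty(\bbR)$, iterating Lemma~\ref{bracket0} shows that $T_{\overline u}f$, $T_uT_{\overline u}f$ and $T_{|u|^2}f$ all belong to $\Dom A^*$, whence $H_u^2f=T_{|u|^2}f-T_uT_{\overline u}f\in\Dom A^*$ and $B_uf\in\Dom A^*$.

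For the commutator itself, I would expand by the Leibniz rule on the invariant domain $\Dom A^*$:
$$
[A^*,T_uT_{\overline u}]f=[A^*,T_u]\,T_{\overline u}f+T_u\,[A^*,T_{\overline u}]f.
$$
Lemma~\ref{bracket0} gives $[A^*,T_u]g=\tfrac{i}{2\pi}I_+(g)\,u$, because $u\in H^2(\bbR)$ implies $\bbP u=u$; the second term vanishes because $\overline u\in\overline{H^2(\bbR)}$ forces $\bbP\overline u=0$. It remains to identify $I_+(T_{\overline u}f)$. Using \eqref{b4} and moving the projection onto the $H^2$-factor $\chi_\eps$,
$$
I_+(T_{\overline u}f)=\lim_{\eps\to 0}\jap{\overline u f,\chi_\eps}=\lim_{\eps\to 0}\jap{f,u\chi_\eps}=\jap{f,u},
$$
the final equality following from $u\chi_\eps\to u$ in $L^2(\bbR)$ by dominated convergence (since $|u\chi_\eps|\le|u|\in L^2$). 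Plugging this back produces exactly $\tfrac{i}{2\pi}\jap{f,u}u$, as required.

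The only mildly delicate point is the use of the Leibniz rule for the unbounded operator $A^*$, but since Lemma~\ref{bracket0} ensures $T_{\overline u}(\Dom A^*)\subset\Dom A^*$, the manipulations all take place on a common dense invariant domain and the expansion is unambiguous. I do not anticipate any further obstacle; the argument is essentially an algebraic rearrangement organised around the factorisation $T_{|u|^2}=T_uT_{\overline u}+H_u^2$.
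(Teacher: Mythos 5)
Your proposal is correct and follows essentially the same route as the paper: both reduce the claim to the factorisation $H_u^2=T_{|u|^2}-T_uT_{\overline u}$ (which the paper verifies via quadratic forms, a minor cosmetic difference from your projection-unfolding), then apply Lemma~\ref{bracket0} twice via the Leibniz rule and identify $I_+(T_{\overline u}f)=\jap{f,u}$ exactly as you do. No gaps.
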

\begin{proof}
Let us first check the identity 
$$
H_u^2=T_{|u|^2}-T_uT_{\overline u}.
$$
Evaluating the quadratic forms, we find
\begin{align*}
\jap{H_u^2f,f}
&=
\norm{\bbP(u\overline{f})}^2
=
\norm{u\overline{f}}^2-\norm{\bbP(\overline{u}f)}^2
=
\norm{\overline{u}f}^2-\norm{\bbP(\overline{u}f)}^2
\\
&=
\jap{\abs{u}^2f,f}-\norm{T_{\overline{u}}f}^2
=
\jap{T_{\abs{u}^2}f,f}-\jap{T_u T_{\overline{u}}f,f},
\end{align*}
as claimed.

By Lemma~\ref{bracket0}, it follows that both $H_u^2$ and $B_u$ map $\Dom A^*$ into itself. 
Furthermore
$$B_u+\frac{i}{2}H_u^2=-iT_{|u|^2}+iH_u^2=-iT_uT_{\overline u}$$
and so for $f\in \Dom A^*$, 
$$
[{A^*},B_u]f=-\frac{i}{2}[{A^*},H_u^2]f -i[{A^*},T_uT_{\overline u}]f\, .
$$
Now using Lemma~\ref{bracket0} again, we find
$$
[A^*,T_uT_{\overline u}]f
=
[A^*,T_u]T_{\overline u}f+T_u[A^*,T_{\overline u}]f
=\frac{i}{2\pi}I_+(T_{\overline u}f)u,
$$
and 
$$
I_+(T_{\overline u}f)
=
\lim_{\eps\to0_+}\jap{T_{\overline u}f,\chi_\eps}
=
\lim_{\eps\to0_+}\jap{f,u\chi_\eps}
=
\jap{f,u}\, .
$$
Putting this together gives the required identity.
\end{proof}

\subsection{Proof of Theorem~\protect\ref{Szegoline}}
Proceeding as in \cite{GG}, we introduce the family $U(t)$ of unitary operators on $H^2(\bbR)$ defined by the linear initial value problem
$$U'(t)=B_{{u_t}}U(t)\ ,\ U(0)=I\ .$$
Using the Lax pair equation \eqref{b3}, we observe that 
$$
\frac{d}{dt}U(t)^*H_{{u_t}}U(t)=-U(t)^*B_{u_t}H_{u_t}U(t)+U(t)^*[B_{u_t},H_{u_t}]U(t)+U(t)^*H_{u_t}B_{u_t}U(t)=0
$$
and therefore
$$
U(t)^*H_{{u_t}}U(t)=H_{u_0}.
$$
Next, we have 
\begin{equation}
\frac{d}{dt}U(t)^*\chi_\eps=-U(t)^*B_{u_t}\chi_\eps=iU(t)^*(T_{\abs{u_t}^2}\chi_\eps-\frac12H_{u_t}^2\chi_\eps).
\label{b8}
\end{equation}
For the r.h.s. here we have
$$
\lim_{\eps\to0_+}T_{\abs{u_t}^2}\chi_\eps
=
\bbP(\abs{u_t}^2)=H_{u_t}u_t,
$$
and 
$$
\lim_{\eps\to0_+}H_{u_t}^2\chi_\eps=\lim_{\eps\to0_+}H_{u_t}(H_{u_t}\chi_\eps)=H_{u_t}u_t,
$$
and therefore the r.h.s. of \eqref{b8} converges to 
$$
\frac{i}{2}U(t)^*H_{u_t}u_t
=
\frac{i}{2}\lim_{\eps\to0_+}U(t)^*H_{u_t}^2\chi_\eps
=
\frac{i}{2}\lim_{\eps\to0_+}H_{u_0}^2U(t)^*\chi_\eps.
$$
By integrating in time, we infer
$$
U(t)^*\chi_{\eps} -\mathrm{e}^{i\frac t2 H_{u_0}^2}\chi_{\eps} \rightarrow 0
$$
in $H^2(\bbR)$.
Applying $H_{u_0}$ to both sides here, we find
\begin{align*}
H_{u_0}U(t)^*\chi_{\eps}
&=
U(t)^*H_{u_t}\chi_\eps\to U(t)^*u_t\, ,
\\
H_{u_0}\mathrm{e}^{i\frac t2 H_{u_0}^2}\chi_{\eps}
&=
\mathrm{e}^{-i\frac t2 H_{u_0}^2}H_{u_0} \chi_{\eps}
\to
\mathrm{e}^{-i\frac t2 H_{u_0}^2}u_0\, ,
\end{align*}
and therefore
\begin{equation}
U(t)^*u_t=\mathrm{e}^{-i\frac t2 H_{u_0}^2}u_0\, .
\label{b10}
\end{equation}

Then using Lemma~\ref{bracket} and \eqref{b10},  we calculate
\begin{align*}
\frac{d}{dt}U(t)^*A^*U(t)&=U(t)^*[{A^*},B_{{u_t}}]U(t)
\\
&=-\frac i2U(t)^*[{A^*},H_{{u_t}}^2]U(t) +\frac{1}{2\pi}U(t)^*\bigl(\jap{\cdot, {u_t}}{u_t}\bigr)U(t)
\\
&=-\frac{i}{2}[U(t)^*A^*U(t),H_{u_0}^2]+\frac{1}{2\pi}\jap{\cdot, \mathrm{e}^{-i\frac{t}{2}H_{u_0}^2}u_0} \mathrm{e}^{-i\frac{t}{2}H_{u_0}^2}u_0 \ .
\end{align*}
Integrating this ODE and recalling the definition \eqref{d1} of $L_{u_0}(t)$ as an integral of rank one operators, we get
$$
U(t)^*A^*U(t)=\mathrm{e}^{i\frac t2 H_{u_0}^2}\left ({A^*}+L_{u_0}(t)\right )\mathrm{e}^{-i\frac t2 H_{u_0}^2}\ .
$$
Plugging this information into formula \eqref{b4a} with $f=u_t$, we infer
\begin{align*}
u_t(z)&=\frac{1}{2\pi i}\lim_{\e \to 0} \jap{\left({A^*}-zI\right)^{-1}{u_t}, \chi_\e} 
\\
&=\frac{1}{2\pi i}\lim_{\e \to 0}
\jap{ \left(U(t)^*A^*U(t)-zI\right)^{-1} U(t)^*{u_t}, U(t)^*\chi_\e}
\\
&= \frac{1}{2\pi i}\lim_{\e \to 0} 
\left \la \left (\mathrm{e}^{i\frac t2 H_{u_0}^2}\left ({A^*}+L_{u_0}(t)\right )\mathrm{e}^{-i\frac t2 H_{u_0}^2}-zI\right )^{-1}\mathrm{e}^{-i\frac{t}{2}H_{u_0}^2}u_0, \mathrm{e}^{i\frac{t}{2}H_{u_0}^2}\chi _\e \right \ra 
\\
&= \frac{1}{2\pi i}\lim_{\e \to 0} 
\left \la \left ({A^*}+L_{u_0}(t)-zI\right )^{-1}\mathrm{e}^{-itH_{u_0}^2}u_0, \chi _\e \right \ra 
\\
&= \frac{1}{2\pi i} I_+\left [\left({A^*}+L_{u_0}(t)-zI\right )^{-1}\mathrm{e}^{-itH_{u_0}^2}u_0\right ]\ .
\end{align*}
The proof of Theorem~\ref{Szegoline} is complete. \qed

%%%%%%%%%%%%%%%%%%%%%%%%%%%%%%%%%%%%%%%%%%%%%%%%%
%%%%%%%%%%%%%%%%%%%%%%%%%%%%%%%%%%%%%%%%%%%%%%%%%
\section{Unbounded Hankel operators in $H^2(\bbR)$}\label{sec.c}
%%%%%%%%%%%%%%%%%%%%%%%%%%%%%%%%%%%%%%%%%%%%%%%%%
%%%%%%%%%%%%%%%%%%%%%%%%%%%%%%%%%%%%%%%%%%%%%%%%%

\subsection{The set-up and the key result}
We need to extend the notion of Hankel operators $H_u$ in $H^2(\bbR)$ to the case of symbols $u\in H^2(\bbR)$. 
In this case the operators $H_u$ may become unbounded and so we need to discuss the issues of domains, closability etc.

Let $u\in H^2(\bbR)$. We define the Hankel operator $H_uf=\bbP(u \overline{f})$ on the domain 
\begin{equation}
\Dom H_u=\{f\in H^2(\bbR): \bbP(u \overline{f})\in H^2(\bbR)\}.
\label{c1}
\end{equation}
Here condition $\bbP(u \overline{f})\in H^2(\bbR)$ should be understood as follows: for the product $h:=u\overline{f}\in L^1(\bbR)$, the Fourier transform $\widehat h$, restricted onto the positive semi-axis, belongs to $L^2(0,\infty)$. 

We will also need a dense in $H^2(\bbR)$ subset of ``nice'' functions that are in the domain of every Hankel operator; we use the set of all rational functions $\calR\subset H^2(\bbR)$ for this purpose.

%%%%%%%%%%%%%%%%%
\begin{theorem}\label{thm.c1}
%%%%%%%%%%%%%%%%%
Let $u\in H^2(\bbR)$, and let $H_uf=\bbP(u \overline{f})$ be the anti-linear Hankel operator with the domain \eqref{c1}. Then: 
\begin{enumerate}[\rm (i)]
\item
$H_u$ is closed, i.e. $\Dom H_u$ is closed with respect to the graph norm of $H_u$, 
$$
\norm{f}_{H_u}:=\bigl(\norm{f}_{H^2}^2+\norm{H_uf}_{H^2}^2\bigr)^{1/2}\, ;
$$

\item
the set of rational functions $\calR$ is dense in $\Dom H_u$ with respect to the graph norm;
\item
for any $f,g\in\Dom H_u$, we have 
$$
\jap{H_uf,g}=\jap{H_ug,f};
$$
\item
suppose for some $f,h\in H^2(\bbR)$  we have 
$$
\jap{H_ug,f}=\jap{h,g}, \quad \forall g\in\Dom H_u.
$$
Then $f\in\Dom H_u$ and $H_uf=h$. 
\end{enumerate}
\end{theorem}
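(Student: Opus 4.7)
The plan is to treat the four parts in order, with (ii) as the main obstacle. For (i), suppose $f_n \in \Dom H_u$ with $f_n \to f$ and $H_u f_n \to h$ in $H^2(\bbR)$. By Cauchy--Schwarz $u\overline{f_n} \to u\overline{f}$ in $L^1(\bbR)$, so their Fourier transforms converge uniformly on $\bbR$. By definition of $\Dom H_u$, $\widehat{H_u f_n}(\xi) = \widehat{u\overline{f_n}}(\xi)$ for $\xi > 0$, and $\widehat{H_u f_n} \to \widehat h$ in $L^2(0,\infty)$. Uniqueness of limits forces $\widehat{u\overline{f}}|_{(0,\infty)} = \widehat h \in L^2(0,\infty)$, so $f\in\Dom H_u$ with $H_u f = h$.

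The substantive step is (ii), density of $\calR$ in the graph norm. The difficulty is that for general $u\in H^2(\bbR)$ and rational $f_n\to f$ in $H^2$ the products $u\overline{f_n}$ converge to $u\overline{f}$ only in $L^1$, not in $L^2$, so one cannot directly control $\bbP(u\overline{f_n})-\bbP(u\overline{f})$ in $H^2$. My strategy is to pass through the unitary Cayley transform $W:H^2(\bbT)\to H^2(\bbR)$. The monomials $z^n$ are mapped to $\pi^{-1/2}(x-i)^n/(x+i)^{n+1}$, which are rational with all poles in the lower half-plane, so $W(\bbC[z])\subset\calR$. Under $W$ the antilinear operator $H_u$ is unitarily equivalent to an unbounded antilinear Hankel operator on $H^2(\bbT)$ with symbol in $H^2(\bbT)$, and the corresponding density of polynomials in the graph domain of Hankel operators with $H^2(\bbT)$-symbols was developed on the disk in \cite{GP}. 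Pulling the polynomial core back through $W$ yields density of $W(\bbC[z])\subset\calR$, hence of $\calR$, in $\Dom H_u$ for the graph norm. A more self-contained alternative would be to regularise $f\in\Dom H_u$ by resolvents of $A$ and $A^*$ (which are Fourier multipliers), producing Paley--Wiener approximants whose Hankel images can be controlled via the intertwining $H_u S_\tau = S_\tau^* H_u$, and then to approximate these in $H^2\cap L^\infty$ by rational functions.

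Parts (iii) and (iv) follow once (ii) is in hand, combined with a Fourier/Cauchy-integral computation. For (iii), if $f,g\in\calR$ then $u\overline f,u\overline g\in L^2(\bbR)$ by boundedness of rational $H^2$-functions, and
\begin{equation*}
\jap{H_u f,g} = \int_\bbR u\overline f\overline g\,dx = \jap{H_u g,f};
\end{equation*}
graph-norm density from (ii) extends the equality to $f,g\in\Dom H_u$. For (iv) I would test the hypothesis against the reproducing kernels $g=v_a$, $a\in\bbC_+$: using $\overline{v_a(x)}=(2\pi i(x-a))^{-1}$ and $\jap{h,v_a}=h(a)$, the assumption becomes
\begin{equation*}
\frac{1}{2\pi i}\int_\bbR\frac{(u\overline f)(x)}{x-a}\,dx = h(a)\qquad(a\in\bbC_+).
\end{equation*}
Since $h\in H^2(\bbR)$ is reproduced on $\bbC_+$ by the Cauchy integral of its own boundary values, the Cauchy integral of $u\overline f-h$ vanishes throughout $\bbC_+$, so the distributional Fourier transform of $u\overline f-h$ is supported in $(-\infty,0]$. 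As $\widehat{u\overline f}$ is bounded and continuous (because $u\overline f\in L^1$) while $\widehat h\in L^2(0,\infty)$, comparison on $(0,\infty)$ forces $\widehat{u\overline f}|_{(0,\infty)}=\widehat h\in L^2(0,\infty)$, whence $f\in\Dom H_u$ and $H_u f=h$.
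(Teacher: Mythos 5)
Your proposal is correct, but it is organised differently from the paper. The paper proves all four items at once by transferring them to the unit disk: Lemma~\ref{lma.c2} shows that the unitary $W$ of \eqref{c6} intertwines $H_u$ with the disk Hankel operator $\oH_{\ou}$, $\ou=S^*(u\circ\varphi^{-1})$, \emph{including the equivalence of the domains}, and then (i)--(iv) are read off from the quoted disk results of \cite{GP} (Theorem~\ref{thm.c1a}), with $W\calP\subset\calR$ giving (ii). You use that transfer only for (ii) --- where it is indeed the essential input, since the graph-norm density of polynomials is the nontrivial content of \cite{GP} --- and you prove (i), (iii), (iv) directly on the line: (i) by combining $L^1$-convergence of $u\overline{f_n}$ (hence uniform convergence of Fourier transforms) with $L^2$-convergence of $\widehat{H_uf_n}$ on $(0,\infty)$; (iii) by the symmetric integral formula on $\calR$ plus graph-norm density; (iv) by testing against reproducing kernels $v_a\in\calR$ and identifying the Cauchy integral of the $L^1$-function $u\overline f-h$ with the Laplace transform of its Fourier transform restricted to $(0,\infty)$. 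These direct arguments are sound and buy self-containedness for three of the four items; the paper's route is shorter once Lemma~\ref{lma.c2} is in place. The one thing you elide is precisely that lemma: for part (ii) you assert that $W$ conjugates $H_u$ to a disk Hankel operator with symbol in $H^2(\bbT)$ \emph{with matching domains}, and this requires the short weak-formulation computation \eqref{c8} (testing against $W\calP$) together with the observation that the change of variables produces the extra factor $e^{-i\theta}$, i.e.\ the symbol $S^*(u\circ\varphi^{-1})\in H^2(\bbT)$. You should include that verification; it is routine but it is exactly where the domain correspondence is established.
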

The proof is given at the end of this section.
%%%%%%%%%%%%%%%%%%%%%%%%%%%%%
\subsection{Hankel operators in $H^2(\bbT)$}
%%%%%%%%%%%%%%%%%%%%%%%%%%%%%
In \cite{GP}, we proved analogous results for Hankel operators acting on the Hardy space of the unit disk $H^2(\bbT)$. 
Our proof of Theorem~\ref{thm.c1} consists of mapping the results of \cite{GP} onto our situation using the conformal map from the unit disc to the upper half-plane. We start by introducing relevant notation for the unit circle case. 

Let $H^2(\bbT)\subset L^2(\bbT)$ be the standard Hardy class of the unit disk, and let $\overset{\circ}{\bbP}:L^2(\bbT)\to H^2(\bbT)$ be the orthogonal projection onto this space (we will supply some notation related to the unit circle case with a circle above the letter). For a symbol $\ou\in H^2(\bbT)$, we denote by $\oH_{\ou}$ the (possibly unbounded) anti-linear Hankel operator in $H^2(\bbT)$, defined by 
\begin{equation}
\oH_{\ou}f=\overset{\circ}{\bbP}(\ou\overline{f})\, ,
\label{c3}
\end{equation}
with the domain
\begin{equation}
\Dom \oH_{\ou}=\{f\in H^2(\bbT): \overset{\circ}{\bbP}(\ou\overline{f})\in H^2(\bbT)\}.
\label{c4}
\end{equation}
Here condition $\overset{\circ}{\bbP}(\ou\overline{f})\in H^2(\bbT)$ should be understood as follows: for the function $h=\ou\overline{f}\in L^1(\bbT)$, the Fourier coefficients $\wh h_n$ with $n\geq0$ are square-summable. Let $\calP$ be the set of all polynomials in $H^2(\bbT)$; it plays the role of the dense set of ``nice'' functions in $H^2(\bbT)$, which are in the domain of every Hankel operator. 

We quote the relevant results of \cite{GP} in this notation.
%%%%%%%%%%%%%%%%
\begin{theorem}\label{thm.c1a}
%%%%%%%%%%%%%%%%
Let $\ou\in H^2(\bbT)$, and let $\oH_{\ou}$ be the Hankel operator defined by \eqref{c3} with the domain \eqref{c4}. 
Then:
\begin{enumerate}[\rm (i)]
\item
$\oH_{\ou}$ is closed, i.e. $\Dom \oH_{\ou}$ is closed with respect to the graph norm of $\oH_{\ou}$;
\item
the set of polynomials $\calP$ is dense in $\Dom \oH_{\ou}$ with respect to the graph norm;
\item
for any $f,g\in\Dom \oH_{\ou}$, we have 
$$
\jap{\oH_{\ou}f,g}=\jap{\oH_{\ou}g,f};
$$
\item
suppose for some $f,h\in H^2(\bbT)$  we have 
$$
\jap{\oH_{\ou}g,f}=\jap{h,g}, \quad \forall g\in\Dom \oH_{\ou}.
$$
Then $f\in\Dom \oH_{\ou}$ and $\oH_{\ou}f=h$. 
\end{enumerate}
\end{theorem}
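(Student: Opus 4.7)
The plan is to prove the four parts in the order (i), (iv), (ii), (iii), with (ii) being the principal technical step. Throughout, it is convenient to work in the Fourier realisation, where $\oH_{\ou}$ becomes the anti-linear Hankel form $(\widehat f(k))_{k\geq 0}\mapsto \bigl(\sum_{k\geq 0}\widehat\ou(m+k)\overline{\widehat f(k)}\bigr)_{m\geq 0}$, and $\Dom\oH_{\ou}$ consists of those $f$ for which the output sequence lies in $\ell^2$.

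For (i), if $f_n\to f$ in $H^2(\bbT)$ and $\oH_{\ou}f_n\to h$, then Cauchy--Schwarz gives $\ou\overline{f_n}\to \ou\overline f$ in $L^1(\bbT)$, so Fourier coefficients converge pointwise; comparing with the $H^2$ limit identifies $\overset{\circ}{\bbP}(\ou\overline f)=h$, giving $f\in\Dom\oH_{\ou}$ with $\oH_{\ou}f=h$. For (iv), I would specialise the hypothesis $\jap{\oH_{\ou}g,f}=\jap{h,g}$ to $g=p\in\calP$. Since $p$ has only finitely many nonzero Fourier coefficients, both sides are absolutely convergent Fourier double sums (the inner sum $\sum_m\widehat\ou(m+k)\overline{\widehat f(m)}$ being bounded by $\norm{\ou}\norm{f}$ via Cauchy--Schwarz), and swapping the order of summation and comparing coefficients of $\overline{\widehat p(k)}$ yields $\widehat h(k)=\sum_{m\geq 0}\widehat\ou(m+k)\overline{\widehat f(m)}$ for every $k\geq 0$. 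Since $h\in H^2$, this $\ell^2$ identity says exactly that $\overset{\circ}{\bbP}(\ou\overline f)=h\in H^2$, i.e.\ $f\in\Dom\oH_{\ou}$ and $\oH_{\ou}f=h$. As a byproduct of the same computation one obtains, at once, the symmetry $\jap{\oH_{\ou}f,p}=\jap{\oH_{\ou}p,f}$ for every $f\in\Dom\oH_{\ou}$ and every $p\in\calP$.

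The main technical step is (ii). Given $f\in\Dom\oH_{\ou}$, my approach would be to approximate $f$ first by its Abel dilates $f_r(\zeta):=f(r\zeta)$, $0<r<1$, which lie in $H^\infty\subset\Dom\oH_{\ou}$ and converge to $f$ in $H^2$. For each fixed $r$, the Taylor series of $f_r$ converges uniformly on $\bbT$, so its polynomial truncations $p_{r,N}$ satisfy $\ou\overline{p_{r,N}}\to \ou\overline{f_r}$ in $L^2$, and hence $\oH_{\ou}p_{r,N}\to \oH_{\ou}f_r$ in $H^2$. The crux is the continuity statement $\oH_{\ou}f_r\to \oH_{\ou}f$ in $H^2$ as $r\to 1^-$. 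Writing $c_m:=\widehat{\oH_{\ou}f}(m)\in\ell^2$ and $c_m^{(r)}:=\sum_{k\geq 0}r^k\widehat\ou(m+k)\overline{\widehat f(k)}$, pointwise convergence $c_m^{(r)}\to c_m$ is elementary; however, a naive Cauchy--Schwarz bound on $\norm{c-c^{(r)}}_{\ell^2}$ requires $\sum_j j\abs{\widehat\ou(j)}^2<\infty$, which is strictly stronger than $\ou\in H^2$. The essential point is that the a priori $\ell^2$-summability of $c$ itself must serve as the dominator: by using the shift intertwining $\oH_{\ou}S=S^*\oH_{\ou}$ together with a Fubini rearrangement of the double sum defining $c_m-c_m^{(r)}$, one reduces the estimate to summing first in $k$ (where absolute convergence is automatic) and then in $m$ (where the $\ell^2$ bound on $c$ is invoked), after which dominated convergence delivers $\norm{c-c^{(r)}}_{\ell^2}\to 0$. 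A diagonal sequence $p_n:=p_{r_n,N_n}$ with $r_n\to 1$ and $N_n\to\infty$ chosen suitably then produces polynomial approximants with $p_n\to f$ and $\oH_{\ou}p_n\to \oH_{\ou}f$ in $H^2$. I expect this continuity step to be the principal obstacle; it is genuinely a Hankel phenomenon rather than a generic property of bilinear forms with $H^2$ symbols.

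With (i) and (ii) established, assertion (iii) follows by approximating $f,g\in\Dom\oH_{\ou}$ by polynomial sequences $p_n,q_n\in\calP$ in the graph norm and passing to the limit in the identity $\jap{\oH_{\ou}p_n,q_n}=\jap{\oH_{\ou}q_n,p_n}$, which is immediate from the $L^2$ calculation $\int_\bbT \ou\,\overline{p_n q_n}\,d\theta/(2\pi)=\int_\bbT\ou\,\overline{q_n p_n}\,d\theta/(2\pi)$ applied to the bounded polynomial product $p_nq_n$.
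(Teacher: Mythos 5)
A preliminary remark: the paper contains no proof of this statement. Theorem~\ref{thm.c1a} is quoted verbatim from \cite{GP} (``We quote the relevant results of \cite{GP} in this notation''), so your proposal can only be measured against that external source and on its own merits. On its own merits: parts (i) and (iv) are correct and cleanly argued — closedness via $L^1$-convergence of $\ou\overline{f_n}$ and pointwise convergence of Fourier coefficients, and the identification of the maximal operator by testing against finitely supported Fourier data, with the absolute convergence of the relevant double sums correctly justified. The derivation of (iii) from (ii) is also fine. The problem is (ii).

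The gap sits exactly where you flag ``the principal obstacle'': the claim that $\oH_{\ou}f_r\to\oH_{\ou}f$ in $H^2(\bbT)$ as $r\to1^-$ for $f$ in the maximal domain. The proposed mechanism — shift intertwining plus a Fubini rearrangement, ``summing first in $k$, then in $m$'', then dominated convergence — is not an argument. The quantity to be controlled, $\sum_m\abs{\sum_k(1-r^k)a_{m,k}}^2$ with $a_{m,k}=\widehat{\ou}(m+k)\overline{\widehat f(k)}$, is quadratic in the double array, so Fubini does not apply to it directly, and expanding the square produces a triple sum whose absolute convergence again requires more than $\ou\in H^2(\bbT)$. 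Dominated convergence in $\ell^2_m$ would require an $r$-independent majorant of $\abs{c_m-c_m^{(r)}}$ lying in $\ell^2$, and the only majorant the data supplies, $\sum_k\abs{a_{m,k}}$, is precisely the ``modulus Hankel'' quantity that you correctly observe need not be square-summable; the a priori bound $(c_m)\in\ell^2$ cannot be invoked as a dominator without first knowing something like $\norm{\oH_{\ou}f_r}\le\norm{\oH_{\ou}f}$, or graph-norm convergence of the Taylor tails — each of which is equivalent in difficulty to (ii) itself (indeed, writing $1-r^k=(1-r)\sum_{j<k}r^j$ reduces your claim to the vanishing of the $\ell^2$-norms of the Hankel tails, i.e.\ to the partial-sum form of (ii)). Note also that (ii) cannot come for free from (i) and (iv): since your (iv) exhibits the maximal operator as the anti-linear adjoint of $\oH_{\ou}|_{\calP}$, the standard adjoint calculus shows that (ii) is \emph{equivalent} to the symmetry (iii) on the maximal domain, so one of (ii) or (iii) must carry a genuine argument. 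That argument is the actual content of \cite[Theorem 2.1]{GP}, and your proposal does not supply it.
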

%%%%%%%%%%%%%%%%%%%%%%%%%%%%
\subsection{Conformal map}
%%%%%%%%%%%%%%%%%%%%%%%%%%%%
Let $\varphi$ be the standard conformal map from the upper half-plane onto the unit disk, $\varphi(z)=\frac{z-i}{z+i}$, and let $W$ be the corresponding unitary map from $H^2(\bbT)$ onto $H^2(\bbR)$, 
\begin{equation}
(Wf)(z)=\frac1{\sqrt{\pi}(z+i)}f(\varphi(z)), \quad z\in\bbC_+.
\label{c6}
\end{equation}
We denote by $S$ be the standard shift operator in $H^2(\bbT)$.

%%%%%%%%%%%%%%%%
\begin{lemma}\label{lma.c2}
%%%%%%%%%%%%%%%%
Let $u\in H^2(\bbR)$ and let $\ou=S^*(u\circ\varphi^{-1})$. Then
for any $f\in H^2(\bbT)$, the inclusions $f\in \Dom \oH_{\ou}$ and $Wf\in\Dom H_u$ are equivalent.
Furthermore, for any $f\in \Dom \oH_{\ou}$, the identity
\begin{equation}
W^*H_uWf=\oH_{\ou}f
\label{c7}
\end{equation}
holds true. 
\end{lemma}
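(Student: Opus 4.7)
The plan is to transport everything to the unit disk via the unitary $W$ and reduce Lemma~\ref{lma.c2} to Theorem~\ref{thm.c1a}. I first extend $W$ to the unitary $\tilde W:L^2(\bbT)\to L^2(\bbR)$ given by the same formula \eqref{c6} (unitarity follows from $|dx/d\theta|=|z+i|^2/2$). Since $\tilde W|_{H^2(\bbT)}=W$ maps $H^2(\bbT)$ onto $H^2(\bbR)$, unitarity also forces $\tilde W$ to map $L^2(\bbT)\ominus H^2(\bbT)$ onto $L^2(\bbR)\ominus H^2(\bbR)$; hence $\tilde W$ intertwines the Hardy projections, $\tilde W\overset{\circ}{\bbP}=\bbP\tilde W$.

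The next ingredient is a purely algebraic pointwise identity. The definition of $\ou$ rewrites as $\zeta\ou(\zeta)=u(\varphi^{-1}(\zeta))-u(i)$; in particular $u\circ\varphi^{-1}=\tfrac{1-\zeta}{2i\sqrt\pi}W^{-1}u\in H^2(\bbT)$ and hence $\ou\in H^2(\bbT)$. Setting $\zeta=\varphi(x)$ and combining with $\overline{Wf(x)}=\overline{f(\varphi(x))}/(\sqrt\pi(x-i))$, a direct substitution produces, for every $f\in L^2(\bbT)$,
\[
\tilde W(\ou\bar f)(x)=(u(x)-u(i))\,\overline{Wf(x)}\qquad\text{a.e.\ on }\bbR.
\]

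For the forward direction ($f\in\Dom\oH_{\ou}\Rightarrow Wf\in\Dom H_u$), I would first treat $f\in\calP$: then $\ou\bar f\in L^2(\bbT)$ and $Wf\in\calR\subset L^\infty(\bbR)$, so both sides of the identity lie in $L^2(\bbR)$, and projecting by $\bbP$ and using the intertwining together with $\overline{Wf}\in H^2(\bbR)^\perp$ gives $W\oH_{\ou}f=H_u(Wf)$. For general $f\in\Dom\oH_{\ou}$, take polynomials $f_n\to f$ in the graph norm of $\oH_{\ou}$ (Theorem~\ref{thm.c1a}(ii)); then $u\overline{Wf_n}\to u\overline{Wf}$ in $L^1(\bbR)$, which gives uniform convergence of Fourier transforms, while $H_u Wf_n=W\oH_{\ou}f_n\to W\oH_{\ou}f$ in $H^2(\bbR)$ gives $L^2$-convergence on $(0,\infty)$; matching the two limits forces $\widehat{u\overline{Wf}}|_{(0,\infty)}=\widehat{W\oH_{\ou}f}\in L^2(0,\infty)$, i.e.\ $Wf\in\Dom H_u$ with $H_uWf=W\oH_{\ou}f$.

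For the reverse direction ($Wf\in\Dom H_u\Rightarrow f\in\Dom\oH_{\ou}$), I would invoke the functional characterization Theorem~\ref{thm.c1a}(iv). Set $h:=W^*H_uWf\in H^2(\bbT)$; for polynomial $g$, $Wg\in L^\infty$, so the trivial $L^2(\bbR)$ symmetry $\int u\,\overline{Wg\cdot Wf}\,dx=\int u\,\overline{Wf\cdot Wg}\,dx$ combined with the forward direction already proved on $\calP$ delivers $\jap{\oH_{\ou}g,f}_{H^2(\bbT)}=\jap{h,g}_{H^2(\bbT)}$. Density of $\calP$ in $\Dom\oH_{\ou}$ (Theorem~\ref{thm.c1a}(ii)) and continuity of both sides in the graph norm of $\oH_{\ou}$ extend the identity to every $g\in\Dom\oH_{\ou}$; Theorem~\ref{thm.c1a}(iv) then yields $f\in\Dom\oH_{\ou}$ with $\oH_{\ou}f=h$. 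The principal obstacle I expect is the interplay of integrability classes—both $u$ and $Wf$ are only in $L^2(\bbR)$, so $u\overline{Wf}$ is merely $L^1$—and the fact that Lemma~\ref{lma.c2} is proved \emph{before} Theorem~\ref{thm.c1}, so closedness and symmetry of $H_u$ cannot yet be invoked; the polynomial case plus the direct Fourier/adjoint arguments are the chief tools for avoiding this circularity, and the identification $\jap{H_uWf,Wg}_{H^2(\bbR)}=\int u\,\overline{Wf\cdot Wg}\,dx$ for polynomial $g$ will require a careful regularization $u_n\to u$ in $H^2\cap H^\infty$ to compare Fourier transforms.
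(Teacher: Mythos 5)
Your proposal is correct in substance but follows a genuinely different route from the paper. The paper's proof is a single symmetric argument: it observes that $f\in\Dom\oH_{\ou}$ is equivalent to the uniform bound $\abs{\int\ou\,\overline{f}\,\overline{h}\,\tfrac{d\theta}{2\pi}}\leq C\norm{h}$ over $h\in\calP$, that $Wf\in\Dom H_u$ is equivalent to the analogous bound on the line over $h\in\calP$, and then shows by one change of variables that the two bilinear forms are \emph{literally equal}; the equivalence of domains and the identity \eqref{c7} drop out simultaneously, with no need to treat the two implications separately or to invoke the closedness, density, or adjoint characterizations of Theorem~\ref{thm.c1a}. You instead prove the two directions independently: the forward one via the pointwise transport identity $\tilde W(\ou\overline{f})=(u-u(i))\overline{Wf}$ (which is correct), the polynomial case, and a graph-norm approximation with Fourier-transform matching; the reverse one via Theorem~\ref{thm.c1a}(iv). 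This works, but it is longer and it hinges on the one step you flag yourself: identifying $\jap{\bbP(u\overline{Wf}),Wg}$ with $\int u\,\overline{Wf}\,\overline{Wg}\,dx$ when $u\overline{Wf}$ is merely $L^1$. That step does not really need a regularization of $u$: since $Wg\in\calR$ has Fourier transform in $L^1\cap L^2(0,\infty)$, the multiplication formula gives $\int h\overline{Wg}=\tfrac1{2\pi}\int_0^\infty\widehat h\,\overline{\widehat{Wg}}$ for any $h\in L^1$, and when $\widehat h|_{(0,\infty)}\in L^2$ the right-hand side is $\jap{\bbP h,Wg}$. Note that this Parseval computation against test functions in $\calP$ (resp.\ $W\calP$) is exactly the mechanism behind the paper's bilinear-form characterization, so the two proofs ultimately rest on the same analytic fact; the paper simply packages it so that both directions come for free.
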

Before embarking on the proof, we note that by a change of variable, 
\begin{equation}
\int_{-\pi}^\pi \abs{u\circ\varphi^{-1}(e^{i\theta})}^2d\theta
=
2\int_{-\infty}^\infty \frac{\abs{u(x)}^2}{1+x^2}dx,
\label{c9}
\end{equation}
we see that $u\in H^2(\bbR)$ implies that $u\circ\varphi^{-1}\in H^2(\bbT)$ and therefore $\ou\in H^2(\bbT)$. 
\begin{proof}
Let $f\in H^2(\bbT)$ and $h\in\calP$; by a change of variable we have
\begin{align}
\int_{-\infty}^\infty &u(x)\overline{Wf(x)}\overline{Wh(x)}dx
=
\int_{-\infty}^\infty u(x)\overline{f(\varphi(x))}\overline{h(\varphi(x))}\frac{dx}{\pi(x-i)^2}
\notag
\\
&=
\int_{-\infty}^\infty \frac{x+i}{x-i}u(x)\overline{f(\varphi(x))}\overline{h(\varphi(x))}\frac{dx}{\pi(x^2+1)}
\notag
\\
&=
\int_{-\pi}^\pi e^{-i\theta} u(\varphi^{-1}(e^{i\theta}))\overline{f(e^{i\theta})}\overline{h(e^{i\theta})}\frac{d\theta}{2\pi}
\notag
\\
&=
\int_{-\pi}^\pi \ou(e^{i\theta})\overline{f(e^{i\theta})}\overline{h(e^{i\theta})}\frac{d\theta}{2\pi}\, .
\label{c8}
\end{align}
Observe that $\calP$ is dense in $H^2(\bbT)$ and as a consequence, $W\calP$ is dense in $H^2(\bbR)$. Now the inclusion $f\in \Dom \oH_{\ou}$ is equivalent to the estimate 
$$
\Abs{\int_{-\pi}^\pi \ou(e^{i\theta})\overline{f(e^{i\theta})}\overline{h(e^{i\theta})}\frac{d\theta}{2\pi}}\leq\norm{h}_{H^2(\bbT)},\quad \forall\  h\in\calP
$$
while the inclusion $Wf\in\Dom H_u$ is equivalent to the estimate
$$
\Abs{\int_{-\infty}^\infty u(x)\overline{Wf(x)}\overline{Wh(x)}dx}
\leq 
C\norm{Wh}_{H^2(\bbR)}, \quad \forall\  h\in \calP.
$$
By \eqref{c8}, the left hand sides of these two estimates coincide, and by the isometricity of $W$ the right hand sides coincide as well. Thus, the inclusions $f\in \Dom \oH_{\ou}$ and $Wf\in\Dom H_u$ are equivalent to one another. For $f\in \Dom \oH_{\ou}$, identity \eqref{c8} yields \eqref{c7}.
\end{proof}

\subsection{Proof of Theorem~\protect\ref{thm.c1}}
Items (i), (iii), (iv) of Theorem~\ref{thm.c1} follow from the corresponding items of 
Theorem~\ref{thm.c1a}. By Theorem~\ref{thm.c1a}, the set $W\calP\subset\calR$ is dense in $\Dom H_u$ with respect to the graph norm of $H_u$; it follows that $\calR$ is also dense in the same sense.
\qed

\subsection{Strong resolvent convergence}
For $u\in H^2(\bbR)$, we define the linear operator $H_u^2$ on the domain
$$
\Dom H_u^2=\{f\in H^2(\bbR): f\in\Dom H_u \text{ and }H_uf\in\Dom H_u\}.
$$
This operator is closed; furthermore, it is self-adjoint and positive semi-definite. It can be alternatively described as the self-adjoint operator corresponding to the quadratic form $\norm{H_u f}^2$.

%%%%%%%%%%%%%%%%%
\begin{theorem}\label{thm.c2}
%%%%%%%%%%%%%%%%%
Let $\{u_n\}_{n=1}^\infty$ be a sequence of elements of $H^2(\bbR)$ with $\norm{u_n-u}_{H^2}\to0$ as $n\to\infty$. Then 
$$
H_{u_n}^2\to H_u^2
$$
in the strong resolvent sense. 
\end{theorem}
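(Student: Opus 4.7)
The strategy is to prove the equivalent statement that $(H_{u_n}^2+I)^{-1}g\to(H_u^2+I)^{-1}g$ in $H^2(\bbR)$ for every $g\in H^2(\bbR)$, which is a standard criterion for strong resolvent convergence of self-adjoint positive semi-definite operators. Set $f_n:=(H_{u_n}^2+I)^{-1}g$ and $f:=(H_u^2+I)^{-1}g$. Since $f_n\in\Dom H_{u_n}^2$, Theorem~\ref{thm.c1}(iii) gives the energy identity
\[
\jap{g,f_n}=\jap{(H_{u_n}^2+I)f_n,f_n}=\norm{f_n}^2+\norm{H_{u_n}f_n}^2\ ,
\]
so $\{f_n\}$ and $\{h_n\}:=\{H_{u_n}f_n\}$ are both bounded in $H^2(\bbR)$ by $\norm{g}$.

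The basic convergence input on the dense set $\calR$ is that for $\phi\in\calR$ the function $\overline{\phi}$ is bounded on $\bbR$, so
\[
\norm{H_{u_n}\phi-H_u\phi}_{H^2}=\norm{\bbP((u_n-u)\overline{\phi})}_{H^2}\le \norm{\overline{\phi}}_{L^\infty}\norm{u_n-u}_{H^2}\to 0\, .
\]
Extract a subsequence along which $f_{n_k}\rightharpoonup\wt f$ and $h_{n_k}\rightharpoonup\wt h$ weakly. For any $\phi\in\calR$ we have $\phi,f_{n_k}\in\Dom H_{u_{n_k}}$, so by Theorem~\ref{thm.c1}(iii)
\[
\jap{h_{n_k},\phi}=\jap{H_{u_{n_k}}f_{n_k},\phi}=\jap{H_{u_{n_k}}\phi,f_{n_k}}\, ;
\]
letting $k\to\infty$ (strong times weak on the right, weak on the left) yields $\jap{\wt h,\phi}=\jap{H_u\phi,\wt f}$. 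Since $\calR$ is a graph-norm core for $H_u$ by Theorem~\ref{thm.c1}(ii), and both sides are continuous in $\phi$ in the graph norm, this identity extends to all $\phi\in\Dom H_u$. Theorem~\ref{thm.c1}(iv) then gives $\wt f\in\Dom H_u$ with $H_u\wt f=\wt h$.

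The same argument applied to $h_n$ in place of $f_n$, noting that $H_{u_n}h_n=g-f_n\rightharpoonup g-\wt f$, yields $\wt h\in\Dom H_u$ with $H_u\wt h=g-\wt f$. Hence $\wt f\in\Dom H_u^2$ and $(H_u^2+I)\wt f=g$, which forces $\wt f=f$ and $\wt h=H_uf$. As every weak subsequential limit of $\{f_n\}$ equals $f$, the full sequences satisfy $f_n\rightharpoonup f$ and $h_n\rightharpoonup H_uf$ weakly. To upgrade this to strong convergence, pass to the limit in the energy identity:
\[
\norm{f_n}^2+\norm{H_{u_n}f_n}^2=\jap{g,f_n}\to\jap{g,f}=\norm{f}^2+\norm{H_uf}^2\, .
\]
Since $\norm{f}^2\le\liminf\norm{f_n}^2$ and $\norm{H_uf}^2\le\liminf\norm{H_{u_n}f_n}^2$ by weak lower semicontinuity, both liminfs must equal the corresponding limit terms; in particular $\norm{f_n}\to\norm{f}$, and combined with weak convergence this yields $f_n\to f$ strongly.

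The principal obstacle is the identification of the weak limit $\wt f$: one must carefully use the anti-linear symmetry relations of Theorem~\ref{thm.c1}(iii)--(iv) together with graph-norm density of $\calR$ to bootstrap from the trivial convergence $H_{u_n}\phi\to H_u\phi$ on the core $\calR$ up to the membership $\wt f\in\Dom H_u^2$ and the resolvent equation $(H_u^2+I)\wt f=g$. Once this identification is established, the promotion from weak to strong convergence is the standard weak-convergence-plus-norm-convergence argument.
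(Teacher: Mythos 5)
Your proof is correct, but it takes a genuinely different route from the paper. The paper disposes of this theorem in three lines: it invokes the unitary equivalence $W^*H_uW=\oH_{\ou}$ of Lemma~\ref{lma.c2}, notes via \eqref{c9} that $\norm{u_n-u}_{H^2(\bbR)}\to0$ implies $\norm{\ou_n-\ou}_{H^2(\bbT)}\to0$, and then cites the corresponding statement \cite[Theorem 2.2]{GP} for Hankel operators on the circle. You instead give a direct, self-contained argument on the line, relying only on Theorem~\ref{thm.c1}: reduction to strong convergence of $(H_{u_n}^2+I)^{-1}$ (legitimate for uniformly semi-bounded self-adjoint operators by the standard open-and-closed/Neumann-series argument on the connected set $\bbC\setminus[0,\infty)$), the energy identity giving uniform bounds on $f_n$ and $H_{u_n}f_n$, identification of weak subsequential limits through the symmetry relation (iii), graph-norm density of $\calR$ (ii), the adjoint-type characterisation (iv) applied twice to land in $\Dom H_u^2$, and finally the norm-convergence upgrade from the energy identity (the step $\lim(a_n+b_n)=a+b$ with $\liminf a_n\ge a$, $\liminf b_n\ge b$ forcing $a_n\to a$, $b_n\to b$ is correct). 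All ingredients you use are established in Section~\ref{sec.c}, and the checks of domain membership ($f_{n}\in\Dom H_{u_{n}}^2$, $\calR\subset\Dom H_{u_n}$) are in order. What the paper's approach buys is brevity and reuse of \cite{GP}; what yours buys is a proof that does not leave the half-plane setting and makes explicit the mechanism (weak compactness plus the closedness properties of $H_u$) that underlies the cited circle result.
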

\begin{proof}
The proof follows from Lemma~\ref{lma.c2} and from the corresponding statement \cite[Theorem 2.2]{GP} about Hankel operators in $H^2(\bbT)$: 
\emph{if $\{\ou_n\}_{n=1}^\infty$ is a sequence of elements of $H^2(\bbT)$ with $\norm{\ou_n-\ou}_{H^2}\to0$ as $n\to\infty$, then
$$
(\oH_{\ou_n})^2\to (\oH_{\ou})^2
$$
in the strong resolvent sense.}
We only need to comment that by \eqref{c9}, convergence 
$\norm{u_n-u}_{H^2}\to0$ in $H^2(\bbR)$ implies the convergence
$\norm{\ou_n-\ou}_{H^2}\to0$ in $H^2(\bbT)$. 
\end{proof}

%%%%%%%%%%%%%%%%%%%%%%%%%%%%%%%%%%%%%%%%%%%%%%%%%
%%%%%%%%%%%%%%%%%%%%%%%%%%%%%%%%%%%%%%%%%%%%%%%%%
\section{The explicit formula for initial data from $H^2(\bbR)$}\label{sec.d}
%%%%%%%%%%%%%%%%%%%%%%%%%%%%%%%%%%%%%%%%%%%%%%%%%
%%%%%%%%%%%%%%%%%%%%%%%%%%%%%%%%%%%%%%%%%%%%%%%%%

\subsection{The flow preserves the norm in $H^2(\bbR)$}
We keep $t>0$ fixed and continue using the notation $L_u(t)$ for the integral \eqref{d1} with any symbol $u\in H^2(\bbR)$; here $H_u$ is the closed operator as defined in the previous section. 
By the remarks following Theorem~\ref{Szegoline}, the r.h.s. of \eqref{d1a} still makes sense in this context. Where no confusion arises, we write $L_u$ in place of $L_u(t)$. 
The main ingredient of the proof of Theorem~\ref{thm.a1} is 

%%%%%%%%%%%%%%%%%%
\begin{theorem}\label{thm.d1}
%%%%%%%%%%%%%%%%%%
For any $u\in H^2(\bbR)$ and $t>0$, let $\Phi(t)u$ be the holomorphic function in the 
upper half-plane, defined by 
\begin{equation}
(\Phi(t)u)(z)
=
\frac{1}{2\pi i}I_+\left [\left (A^*+L_{u}(t)- z \right )^{-1}\mathrm{e}^{-itH_{u}^2}u\right ] , \quad \Im z>0.
\label{d2}
\end{equation}
Then $\Phi(t)u\in H^2(\bbR)$ and 
$$
\norm{\Phi(t)u}_{H^2}=\norm{u}_{H^2}. 
$$
\end{theorem}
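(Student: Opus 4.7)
The strategy has two parts. First, an approximation argument based on the strong resolvent convergence of Theorem~\ref{thm.c2} shows that $\Phi(t)u$ lies in $H^2(\bbR)$ with $\norm{\Phi(t)u}_{H^2}\le\norm{u}_{H^2}$. Second, to obtain the matching lower bound one passes to the Cayley transform of $A+L_u(t)$ and applies an abstract operator-theoretic result of \cite{GP}; this is the delicate half of the proof and is carried out in Sections~\ref{sec.e}--\ref{sec.g}.

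\textbf{Approximation argument.} Pick a sequence $u_n\in H^2(\bbR)\cap W^{s,2}(\bbR)$ with $s>\tfrac12$ and $\norm{u_n-u}_{H^2}\to 0$; rational approximations in $\calR$ suffice. By Theorem~\ref{thm.c2}, $H_{u_n}^2\to H_u^2$ in the strong resolvent sense, hence $\mathrm{e}^{-isH_{u_n}^2}\to\mathrm{e}^{-isH_u^2}$ strongly, uniformly for $s\in[0,t]$, and consequently
\[
\sup_{s\in[0,t]}\norm{\mathrm{e}^{-isH_{u_n}^2}u_n-\mathrm{e}^{-isH_u^2}u}_{H^2}\to 0.
\]
Therefore the rank-one integrands defining $L_{u_n}(t)$ converge in operator norm uniformly in $s$, so $\norm{L_{u_n}(t)-L_u(t)}\to 0$. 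The second resolvent identity then gives operator-norm convergence $(A^*+L_{u_n}(t)-z)^{-1}\to(A^*+L_u(t)-z)^{-1}$ for every $z\in\bbC_+$. Rewriting \eqref{b4a} as $I_+(g)=2\pi i\,\jap{(A^*-z)g,v_z}$ for $g\in\Dom A^*$, we deduce that $(\Phi(t)u_n)(z)\to(\Phi(t)u)(z)$ pointwise in $\bbC_+$. Since $\Phi(t)u_n$ solves the cubic Szeg\H{o} equation, $\norm{\Phi(t)u_n}_{H^2}=\norm{u_n}_{H^2}$, so the sequence is bounded in $H^2(\bbR)$; a weakly convergent subsequence must have limit $\Phi(t)u$, because pointwise evaluation on $\bbC_+$ is implemented by the bounded linear functionals $\jap{\cdot,v_z}$. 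Hence $\Phi(t)u\in H^2(\bbR)$ and $\norm{\Phi(t)u}_{H^2}\le\norm{u}_{H^2}$ by lower semicontinuity under weak convergence.

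\textbf{Lower bound via Cayley transform.} The approximation alone cannot produce the reverse inequality, since $\Phi(t)u_n$ generally converges only weakly. Instead, we use that $A+L_u(t)$ is maximal dissipative, so its Cayley transform
\[
\mathcal{C}(t):=(A+L_u(t)-i)(A+L_u(t)+i)^{-1}
\]
is a contraction on $H^2(\bbR)$. When $L_u(t)=0$, $\mathcal{C}(0)$ is multiplication by the inner function $\varphi(z)=(z-i)/(z+i)$, which under the unitary $W$ of Section~\ref{sec.c} is conjugate to the shift $S$ on $H^2(\bbT)$; in particular $\mathcal{C}(0)$ is a pure isometry. Expanding $(A^*+L_u(t)-z)^{-1}$ via the Cayley transform recasts \eqref{d2} as an expression for $\Phi(t)u$ in terms of iterates of $\mathcal{C}(t)^*$ applied to $\mathrm{e}^{-itH_u^2}u$. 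The fact that the Lax--Beurling semigroup $S_\tau$ converges strongly to zero as $\tau\to\infty$ translates, at the level of Cayley transforms, into the absence of a unitary summand in the Wold decomposition of $\mathcal{C}(t)$ seen by this vector. An abstract Plancherel-type identity of \cite{GP}, transferred to $H^2(\bbR)$ through $W$, then expresses $\norm{\Phi(t)u}_{H^2}^2$ as a sum that telescopes to $\norm{\mathrm{e}^{-itH_u^2}u}_{H^2}^2=\norm{u}_{H^2}^2$.

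\textbf{Main obstacle.} The delicate point is verifying that the Wold decomposition of $\mathcal{C}(t)$ has no unitary part interacting with $\mathrm{e}^{-itH_u^2}u$, so that the abstract identity from \cite{GP} can be invoked to recover the full $H^2$-norm of $u$. This requires genuine use of the spectral theory of the dissipative perturbation $A+L_u(t)$ of the symmetric operator $A$, together with $C_0$-semigroup arguments that propagate the strong-decay property of $S_\tau$ to $\mathcal{C}(t)$. A secondary, but non-trivial, ingredient is making sense of $H_u$ as an unbounded self-adjoint object on $H^2(\bbR)$ and securing the continuity in the symbol used above, which is precisely the content of Theorems~\ref{thm.c1} and~\ref{thm.c2}.
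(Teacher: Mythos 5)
Your first half (the approximation argument giving $\Phi(t)u\in H^2(\bbR)$ and $\norm{\Phi(t)u}_{H^2}\le\norm{u}_{H^2}$ via norm conservation for smooth data plus weak lower semicontinuity) is a legitimate route that the paper does not take, and it is essentially sound modulo small points (uniformity of the strong convergence $e^{-isH_{u_n}^2}\to e^{-isH_u^2}$ in $s$ needs a word, and $H_u$ is anti-linear, so it is $H_u^2$ that is self-adjoint, not $H_u$). But the theorem asserts equality, and your second half does not actually prove the reverse inequality: it defers to an unspecified ``Plancherel-type identity of \cite{GP}'' and asserts that the strong decay of $S_\tau$ ``translates into the absence of a unitary summand seen by this vector.'' That is not the mechanism, and it would not work as stated. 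After passing to the Cayley transform $\Sigma_u$, one finds $\norm{\Phi(t)u}_{H^2}^2=\sum_{n\ge0}\abs{\jap{p,\Sigma_u^nq}}^2$ with $p=e^{-itH_u^2}u$ and $\{\Sigma_u^nq\}$ an orthonormal basis of the c.n.u.\ subspace of the Wold decomposition; the whole difficulty is to show that $p$ lies in that subspace. Since $\Sigma_u$ is only a trace-class perturbation of the shift, it may well acquire a unitary summand (with singular spectrum), and semigroup decay of $S_\tau$ says nothing about whether $p$ has a component there.

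The paper's proof closes this gap with two ingredients you do not mention. First, the trace-class perturbation argument (\cite[Lemma~4.3]{GP}, a scattering-theoretic statement) shows the unitary part of $\Sigma_u$ has no absolutely continuous component. Second, and crucially, one introduces the auxiliary anti-linear operator $\calH_u=e^{-itH_u^2}H_u$ and proves the commutation relation $\Sigma_u^*\calH_u=\calH_u\Sigma_u$ (this occupies all of Section~6); combined with \cite[Lemma~4.4]{GP} this shows the Wold decomposition of $\Sigma_u$ reduces $\calH_u$. Then the identity $(I-\Sigma_u^*)p=\sqrt{4\pi}\,\calH_u q$, together with $q\in X^{\rm(cnu)}$ and the triviality of $\Ker(I-\Sigma_u^*)$, forces $p\in X^{\rm(cnu)}$ and Parseval gives the exact equality. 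Without the operator $\calH_u$ and its commutation relation with $\Sigma_u$, your argument has no way to locate $p$ inside the c.n.u.\ subspace, so the lower bound --- the substantive content of the theorem --- remains unproved.
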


In the rest of this Section, assuming Theorem~\ref{thm.d1}, we prove Theorem~\ref{thm.a1}. 
Theorem~\ref{thm.d1} is proved in Sections~\ref{sec.e}--\ref{sec.g}.

We emphasize that in Theorem~\ref{thm.d1}, formula \eqref{d2} is regarded as a \emph{definition} of a non-linear map $\Phi(t)$ on $H^2(\bbR)$. The fact that $\Phi(t)$, thus defined, coincides with the flow map of the cubic Szeg\H{o} equation is established later. 

\subsection{Rewriting $\Phi(t)$ without $I_+$}
Below we recast the definition \eqref{d2} in an alternative way, avoiding the use of the functional $I_+$. 
Recall that $v_a$ is the reproducing kernel of $H^2(\bbR)$ at the point $a$ in the upper half-plane. In particular, below we use $v_i$ which corresponds to the choice $a=i$. 

%%%%%%%%%%%%%%%%%
\begin{lemma}\label{lma.d2}
%%%%%%%%%%%%%%%%%
For any $u\in H^2(\bbR)$ and any $\Im z>0$, the r.h.s. of \eqref{d2} can be written as
\begin{equation}
(\Phi(t)u)(z)
=
\frac1{\sqrt{4\pi}}\jap{(A^*+L_u-i)(A^*+L_u-z)^{-1}e^{-itH_u^2}u,q}, 
\label{d3}
\end{equation}
where
$$
q:=\sqrt{4\pi}(I-(A+L_u+i)^{-1}L_u)v_i.
$$
\end{lemma}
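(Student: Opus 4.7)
The plan is to take the definition \eqref{d2} of $(\Phi(t)u)(z)$ and massage the functional $I_+$ into an inner product against $v_i$, then push everything into the form $\jap{(A^*+L_u-i)(A^*+L_u-z)^{-1}e^{-itH_u^2}u,\,\cdot\,}$ by algebraic manipulations of resolvents, collecting the remaining pieces into the vector $q$.

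\textbf{Step 1: re-express $I_+$ on $\Dom A^*$.} The element $g:=(A^*+L_u-z)^{-1}e^{-itH_u^2}u$ lies in $\Dom A^*$, so by the identity \eqref{b4a} (applied with the choice of base point $i\in\bbC_+$, writing an arbitrary $g\in\Dom A^*$ as $(A^*-i)^{-1}((A^*-i)g)$),
\[
I_+(g)\;=\;2\pi i\,\jap{(A^*-i)g,\,v_i}.
\]

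\textbf{Step 2: split $A^*-i$ and move $L_u$ across the inner product.} Writing $A^*-i=(A^*+L_u-i)-L_u$ gives
\[
I_+(g)=2\pi i\,\jap{(A^*+L_u-i)g,\,v_i}\;-\;2\pi i\,\jap{L_u g,\,v_i}.
\]
Since $L_u$ is self-adjoint (as an integral of rank-one self-adjoint operators), one has $\jap{L_u g,v_i}=\jap{g,L_u v_i}$. To combine the two terms I insert the identity $(A^*+L_u-i)^{-1}(A^*+L_u-i)$ in front of $g$ and then take the adjoint, using that $L_u$ is bounded self-adjoint and $\Dom A^*$ is the natural domain, so that $(A^*+L_u-i)^*=A+L_u+i$. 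This yields
\[
\jap{g,L_u v_i}=\jap{(A^*+L_u-i)g,\,(A+L_u+i)^{-1}L_u v_i}.
\]
Bounded invertibility of $A^*+L_u-i$ (and hence of its adjoint $A+L_u+i$) follows from the maximal dissipativity statement recalled after Theorem~\ref{Szegoline}, applied with $z=i$.

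\textbf{Step 3: assemble $q$.} Substituting back,
\[
I_+(g)=2\pi i\,\jap{(A^*+L_u-i)g,\;v_i-(A+L_u+i)^{-1}L_u v_i}
     =\frac{2\pi i}{\sqrt{4\pi}}\jap{(A^*+L_u-i)g,\,q},
\]
where $q=\sqrt{4\pi}\bigl(I-(A+L_u+i)^{-1}L_u\bigr)v_i$. Dividing by $2\pi i$ and substituting the definition of $g$ yields \eqref{d3}.

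The calculation is essentially algebraic; the only step requiring any care is Step~2, where one needs that both $A^*+L_u-i$ and its adjoint $A+L_u+i$ are boundedly invertible on $H^2(\bbR)$, and that $L_u$ is bounded and self-adjoint (so that the adjoint manipulation is legitimate without domain subtleties). Both facts have been recorded earlier, so no genuine analytic obstacle arises.
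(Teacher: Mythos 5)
Your proposal is correct and follows essentially the same route as the paper: both reduce $I_+$ to the pairing $\jap{(A^*-i)g,v_i}$ via \eqref{b4a} (the paper re-derives this with the $\chi_\eps$ limit, you invoke it directly), and both then use $A^*-i=(A^*+L_u-i)-L_u$ together with $(A^*+L_u-i)^*=A+L_u+i$ to move the $L_u$ correction onto $v_i$ and form $q$ --- your additive splitting is just the paper's factorization $(I-L_u(A^*+L_u-i)^{-1})(A^*+L_u-i)$ written out termwise. No gaps.
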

We note that the reason for the ``strange'' normalisation of the vector $q$ is that $\norm{q}=1$ (as we shall see later). This vector plays an important role in what follows. 
\begin{proof} 
By formula \eqref{b1a} for the resolvent of $A$, we have 
$$
\frac1{2\pi i}\lim_{\eps\to0_+}(A+i)^{-1}\chi_\eps=-v_i\, .
$$
Next, we recall that $\Dom A^*=\Dom (A^*+L_u)$ (this will be used in the calculation below). Using \eqref{b4}, we find
\begin{align*}
(\Phi(t)u)(z)
&=
\frac{1}{2\pi i}I_+\left [\left (A^*+L_{u}- z \right )^{-1}\mathrm{e}^{-itH_{u}^2}u\right ] 
\\
&=
\frac{1}{2\pi i}\lim_{\eps\to0_+} \jap{(A^*+L_{u}- z)^{-1}\mathrm{e}^{-itH_{u}^2}u,\chi_\eps}
\\
&=
\frac{1}{2\pi i}\lim_{\eps\to0_+} \jap{(A^*+L_{u}- z)^{-1}\mathrm{e}^{-itH_{u}^2}u,(A+i)(A+i)^{-1}\chi_\eps}
\\
&=
\frac{1}{2\pi i}\lim_{\eps\to0_+} \jap{(A^*-i)(A^*+L_{u}- z)^{-1}\mathrm{e}^{-itH_{u}^2}u,(A+i)^{-1}\chi_\eps}
\\
&=
\jap{(A^*-i)(A^*+L_{u}- z)^{-1}\mathrm{e}^{-itH_{u}^2}u,v_i}\, .
\end{align*}
Finally, substituting 
\begin{align*}
(A^*-i)(A^*+L_{u}- z)^{-1}
&=
(A^*+L_u-i-L_u)(A^*+L_{u}- z)^{-1}
\\
&=
(I-L_u(A^*+L_{u}-i)^{-1})(A^*+L_u-i)(A^*+L_{u}- z)^{-1}
\end{align*}
in the previous identity, we obtain the required result. 
\end{proof}

\subsection{A convergence argument}
We isolate a convergence argument as a lemma. 
%%%%%%%%%%%%%%%%%%
\begin{lemma}\label{lma.d6}
%%%%%%%%%%%%%%%%%%
Let $u_n\in H^2(\bbR)$ be a convergent sequence, $\norm{u_n-u}_{H^2}\to0$ as $n\to\infty$. Then for all $t\in\bbR$ we have the $H^2(\bbR)$-convergence 
$$
\norm{e^{-itH_{u_n}^2}u_n-e^{-itH_{u}^2}u}_{H^2}\to0
$$
and the operator norm convergence
$$
\norm{L_{u_n}-L_{u}}\to0
$$
as $n\to\infty$. 
\end{lemma}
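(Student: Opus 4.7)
The plan is to run everything off Theorem~\ref{thm.c2}: the hypothesis $\|u_n - u\|_{H^2} \to 0$ feeds into that theorem to give $H_{u_n}^2 \to H_u^2$ in the strong resolvent sense, and by the standard bounded-continuous functional calculus consequence of strong resolvent convergence (applied to $x \mapsto e^{-isx}$), this upgrades to strong convergence of the unitary groups
$$
e^{-isH_{u_n}^2} \to e^{-isH_u^2} \quad \text{on } H^2(\bbR) \text{ for each fixed } s\in\bbR.
$$

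With this in hand, the first assertion is almost immediate. I would set $\psi_{n,s} := e^{-isH_{u_n}^2}u_n$ and $\psi_s := e^{-isH_u^2}u$ and write
$$
\psi_{n,s} - \psi_s = e^{-isH_{u_n}^2}(u_n - u) + \bigl(e^{-isH_{u_n}^2} - e^{-isH_u^2}\bigr)u,
$$
so that the first summand has norm $\|u_n-u\|_{H^2}$ by unitarity and the second tends to zero pointwise in $s$ by the strong convergence of semigroups above. This already shows $\|\psi_{n,s}-\psi_s\|_{H^2}\to 0$ for every $s\in\bbR$, and specialising to $s=t$ yields the first assertion of the lemma.

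For the operator-norm convergence $\|L_{u_n}-L_u\|\to 0$, I would decompose the rank-one integrand bilinearly,
$$
\jap{\cdot,\psi_{n,s}}\psi_{n,s} - \jap{\cdot,\psi_s}\psi_s = \jap{\cdot,\psi_{n,s}-\psi_s}\psi_{n,s} + \jap{\cdot,\psi_s}(\psi_{n,s}-\psi_s),
$$
and use the elementary identity $\|\jap{\cdot,a}b\|=\|a\|\,\|b\|$ together with unitarity ($\|\psi_{n,s}\|=\|u_n\|$, $\|\psi_s\|=\|u\|$) to obtain
$$
\bigl\|\jap{\cdot,\psi_{n,s}}\psi_{n,s} - \jap{\cdot,\psi_s}\psi_s\bigr\| \leq \|\psi_{n,s}-\psi_s\|_{H^2}\bigl(\|u_n\|_{H^2}+\|u\|_{H^2}\bigr).
$$
The right-hand side vanishes pointwise in $s\in[0,t]$ by the previous paragraph and is dominated uniformly in $s$ by a constant (since $\|u_n\|_{H^2}$ is bounded for large $n$), so dominated convergence applied to the integral defining $L_{u_n}-L_u$ closes the argument.

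The only subtle point is this last step, where we need operator-norm rather than merely strong convergence of the integrand. It is painless precisely because the integrand has rank one: a vector converging in norm immediately produces a rank-one operator converging in operator norm via the sesquilinear expansion above, and the bounded interval of integration together with the uniform bound on $\|\psi_{n,s}\|$ takes care of the integration.
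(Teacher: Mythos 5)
Your proposal is correct and follows essentially the same route as the paper: Theorem~\ref{thm.c2} gives strong resolvent convergence, the functional calculus upgrades this to strong convergence of the unitary groups, the split $e^{-isH_{u_n}^2}(u_n-u)+(e^{-isH_{u_n}^2}-e^{-isH_u^2})u$ gives the vector convergence, and the rank-one structure plus dominated convergence gives the operator-norm convergence of $L_{u_n}$. Your explicit bilinear estimate for the rank-one difference is just a spelled-out version of the step the paper states more briefly.
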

In fact, the above convergence also holds true in the trace norm, but we will not need this fact. 
\begin{proof}
By Theorem~\ref{thm.c2} we have the strong resolvent convergence 
$$
H_{u_n}^2\to H_u^2. 
$$ 
This implies \cite[Theorem VIII.20]{RS1} that for all bounded continuous functions $f$, we have $f(H_{u_n}^2)\to f(H_u^2)$ in the strong operator topology. In particular, 
$$
e^{-itH_{u_n}^2}\to e^{-itH_{u}^2}
$$
for all $t\in\bbR$ strongly as $n\to\infty$.
It follows that 
$$
\norm{e^{-itH_{u_n}^2}u_n-e^{-itH_{u}^2}u}_{H^2}\to0
$$
for all $t\in\bbR$ as $n\to\infty$ and therefore we have the operator norm convergence of rank one operators
$$
\norm{\jap{\cdot,e^{-itH_{u_n}^2}u_n}e^{-itH_{u_n}^2}u_n-
\jap{\cdot,e^{-itH_{u}^2}u}e^{-itH_{u}^2}u}\to0, \quad n\to\infty.
$$
Also, the operator norms of $\jap{\cdot,e^{-itH_{u_n}^2}u_n}e^{-itH_{u_n}^2}u_n$ are uniformly bounded. Integrating over $t$ and using the operator norm valued dominated convergence theorem, we find that 
$$
\norm{L_{u_n}-L_u}\to0, \quad n\to\infty.
$$
The proof is complete.
\end{proof}

\subsection{Proof of Theorem~\protect\ref{thm.a1}}
Assuming Theorem~\ref{thm.d1}, let us prove Theorem~\ref{thm.a1}.
We extend the map $\Phi(t)$ to the whole of $H^2(\bbR)$ as indicated in Theorem~\ref{thm.d1}. 
Clearly, (i) follows directly from Theorem~\ref{thm.d1}. 

Let us prove (ii). Let $\norm{u_n-u}_{H^2}\to0$. 
Fix $z\in\bbC_+$ and let us pass to the limit in \eqref{d3}. We have
$$
(A^*+L_u-i)(A^*+L_u-z)^{-1}=I+(z-i)(A^*+L_u-z)^{-1}
$$
and by the resolvent identity, 
\begin{equation}
(A^*+L_{u_n}-z)^{-1}-(A^*+L_u-z)^{-1}
=
(A^*+L_{u_n}-z)^{-1}(L_u-L_{u_n})(A^*+L_u-z)^{-1}. 
\label{d6}
\end{equation}
Applying the resolvent norm estimate \eqref{b0a} of a maximal dissipative operator to the r.h.s. of \eqref{d6} (where $-A^*-L_{u}$ and $-A^*-L_{u_n}$ are maximal dissipative) and using Lemma~\ref{lma.d6},  we find that 
$$
\norm{(A^*+L_{u_n}-i)(A^*+L_{u_n}-z)^{-1}e^{-itH_{u_n}^2}u_n-(A^*+L_u-i)(A^*+L_u-z)^{-1}e^{-itH_u^2}u}\to0
$$
as $n\to\infty$. Finally, let 
$$
q=\sqrt{4\pi}(I-(A+L_u+i)^{-1}L_u)v_i, \quad q_n=\sqrt{4\pi}(I-(A+L_{u_n}+i)^{-1}L_{u_n})v_i.
$$
Again by the resolvent identity and by Lemma~\ref{lma.d6}, we find that $\norm{q_n-q}_{H^2}\to0$ as $n\to\infty$. 
We conclude that we can pass to the limit in \eqref{d3}, i.e. 
$$
\abs{(\Phi(t)u_n)(z)-(\Phi(t)u)(z)}\to0, \quad n\to\infty,
$$
for all $\Im z>0$, and so $\Phi(t)u_n\to\Phi(t)u$ weakly in $H^2(\bbR)$ as $n\to\infty$. 
On the other hand, by Theorem~\ref{thm.d1} we have 
$$
\norm{\Phi(t)u_n}_{H^2}=\norm{u_n}_{H^2}\to\norm{u}_{H^2}=\norm{\Phi(t)u}_{H^2}
$$
and so we conclude that $\norm{\Phi(t)u_n-\Phi(t)u}_{H^2}\to0$ as $n\to\infty$.

Let us prove (iii). Let $t_n\to t$ as $n\to\infty$. 
Following exactly the same argument as in the previous step of the proof (or simply rescaling by writing $u_n=t_n u$), we find that $\Phi(t_n)u\to\Phi(t)u$ weakly as $n\to\infty$ and on the other hand, by Theorem~\ref{thm.d1} 
$$
\norm{\Phi(t_n)u}_{H^2}=\norm{u}_{H^2}=\norm{\Phi(t)u}_{H^2}
$$
and so again we conclude that $\norm{\Phi(t_n)u-\Phi(t)u}_{H^2}\to0$ as $n\to\infty$.

Let us prove (iv). We split the proof into four steps. 

\emph{Step 1:}
Let $u\in H^2(\bbR)$; consider the Fourier transform of $\abs{u}^2$, evaluated on the real axis. We have
$$
\widehat{\abs{u}^2}(-\xi)=\overline{\widehat{\abs{u}^2}(\xi)}, \quad \xi\in\bbR,
$$
and therefore, using Plancherel's theorem, we find that $u\in H^4$ iff
$$
\norm{\bbP(\abs{u}^2)}_{H^2}^2=\int_0^\infty \Abs{\widehat{\abs{u}^2}(\xi)}^2d\xi<\infty.
$$
Furthermore, this argument also shows that if $u\in H^4(\bbR)$, then 
$$
\norm{u}_{H^4}^4=2\norm{\bbP(\abs{u}^2)}_{H^2}^2. 
$$
We conclude that $u\in H^4(\bbR)$ iff $u\in\Dom H_u$ and in this case 
$$
\norm{u}_{H^4}^4=2\norm{H_u u}_{H^2}^2.
$$

\emph{Step 2:}
For $x>0$ and $u\in H^2(\bbR)$, denote
$$
J(x,u):=\jap{(I+xH_u^2)^{-1}u,u}. 
$$
The function $x\mapsto J(x,u)$ is $C^\infty$ smooth on $(0,\infty)$, with 
$$
\partial_x J(x,u)=-\jap{(I+xH_u^2)^{-1}H_u^2(I+xH_u^2)^{-1}u,u}
=-\norm{H_u(I+xH_u^2)^{-1}u}^2.
$$
Observe that $u\in\Dom H_u$ iff
$$
\sup_{x>0}\norm{H_u(I+xH_u^2)^{-1}u}<\infty,
$$
i.e. iff the derivative $\partial_x J(x,u)$ remains bounded as $x\to0_+$, and in this case
$$
\partial_x J(0_+,u)=-\norm{H_uu}^2.
$$
Combining this with the previous step, we see that $u\in H^4(\bbR)$ iff $\partial_x J(x,u)$ remains bounded as $x\to0_+$, and in this case
$$
\partial_x J(0_+,u)=-\tfrac12\norm{u}_{H^4}^4.
$$

\emph{Step 3:}
Let us check that 
\begin{equation}
J(x,\Phi(t)u)=J(x,u), \quad\forall u\in\calR, \quad \forall x>0.
\label{d7}
\end{equation}
Denote $u_t(z)=(\Phi(t)u)(z)$; since $u$ is rational, we can freely use the cubic Szeg\H{o} equation \eqref{a1} and the Lax pair formulation \eqref{b3}, and so it suffices to check that 
$$
\partial_t J(x,u_t)=0.
$$
The cubic Szeg\H{o} equation \eqref{a1} can be written as
$$
\partial_t u_t=-i\bbP(\abs{u_t}^2u_t)=-iT_{\abs{u_t}^2}u_t=B_{u_t}u_t-\tfrac{i}{2}H_{u_t}^2u_t.
$$
On the other hand, from the Lax pair equation \eqref{b3} by induction in $m$ we find 
$$
\partial_t H_{u_t}^m=[B_{u_t},H_{u_t}^m]
$$
and therefore
$$
\partial_t (I+xH_{u_t}^2)^{-1}=[B_{u_t},(I+xH_{u_t}^2)^{-1}]\, .
$$
Putting this together, using the fact that $B_{u}^*=-B_u$ and that $H_{u_t}^2$ commutes with $(I+xH_{u_t}^2)^{-1}$, we find
\begin{align*}
\partial_t J(x,u_t)=&\jap{[B_{u_t},(I+xH_{u_t}^2)^{-1}]u_t,u_t}
\\
&+\jap{(I+xH_{u_t}^2)^{-1}(B_{u_t}-\tfrac{i}{2}H_{u_t}^2)u_t,u_t}
\\
&+\jap{(I+xH_{u_t}^2)^{-1}u_t,(B_{u_t}-\tfrac{i}{2}H_{u_t}^2)u_t}
=0,
\end{align*}
as claimed. 

\emph{Step 4:}
By the already proven part (ii) of Theorem~\ref{thm.a1}, we can extend \eqref{d7} from the dense set of $u\in\calR$ to all $u\in H^2(\bbR)$. Now we can put it all together. In steps 1 and 2, we have characterised the inclusion $u\in H^4(\bbR)$ in terms of the function $J(x,u)$. In step 3, we proved that this function is invariant under the map $\Phi(t)$. This concludes the proof of Theorem~\ref{thm.a1}(iv). \qed

%%%%%%%%%%%%%%%%%%%%%%%%%%%%%%%%%%%
%%%%%%%%%%%%%%%%%%%%%%%%%%%%%%%%%%%
\section{In the land of contractions}\label{sec.e}
%%%%%%%%%%%%%%%%%%%%%%%%%%%%%%%%%%%
%%%%%%%%%%%%%%%%%%%%%%%%%%%%%%%%%%%
In the rest of the paper, our task is to prove Theorem~\ref{thm.d1}. 
Formula \eqref{d2} for $\Phi(t)$ is stated in terms of the (adjoint of) dissipative operator $A+L_u$. 
Our strategy is to rewrite this formula in terms of a contraction $\Sigma_u$ which is a Cayley transform of $A+L_u$ and then to use (i) a commutation relation between $\Sigma_u$ and a certain auxiliary Hankel-like operator $\calH_u$ and (ii) a general operator theoretic lemma about contractions from our previous paper \cite{GP}. 

Figuratively speaking, at this point in the paper we are crossing the bridge from the land of dissipative operators to the land of contractions.

\subsection{Contractions $\Sigma_0$ and $\Sigma_u$}
Here we define the Cayley transforms of $A$ and $A+L_u$ and discuss their basic properties. 
Let $u\in H^2(\bbR)$ and let $L_u$ be the trace-class self-adjoint operator \eqref{d1}.
We define
\begin{equation}
\Sigma_0:=(A-i)(A+i)^{-1}, \quad \Sigma_u:=(A+L_u-i)(A+L_u+i)^{-1}.
\label{e8aa}
\end{equation}
We will regard $\Sigma_u$ as a ``perturbation'' of $\Sigma_0$. 
%%%%%%%%%%%%%%%%%
\begin{lemma}\label{lma.d3}
%%%%%%%%%%%%%%%%%
The operator $\Sigma_0$ is a completely non-unitary contraction, unitarily equivalent to the shift operator $S$ in $H^2(\bbT)$ and satisfying
\begin{equation}
\Sigma_0^*\Sigma_0=I, \quad \Sigma_0\Sigma_0^*=I-4\pi \jap{\cdot,v_i}v_i.
\label{e8a}
\end{equation}
\end{lemma}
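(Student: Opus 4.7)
\smallskip

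\noindent\textbf{Proof plan for Lemma~\ref{lma.d3}.}
The plan is to identify $\Sigma_0$ as multiplication by the Cayley function $\varphi(z)=(z-i)/(z+i)$ on $H^2(\bbR)$, and then transfer everything to $H^2(\bbT)$ via the unitary $W$ from \eqref{c6}, where multiplication by $\varphi$ becomes the shift $S$.

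First, I would get an explicit formula for $\Sigma_0$. Writing
$$
\Sigma_0=(A-i)(A+i)^{-1}=\bigl((A+i)-2i\bigr)(A+i)^{-1}=I-2i(A+i)^{-1},
$$
and using the resolvent formula \eqref{b1a} (with $z_0=i$, so $\overline{z_0}=-i$), which gives $(A+i)^{-1}f(z)=f(z)/(z+i)$, a one-line calculation yields
$$
(\Sigma_0 f)(z)=f(z)-\frac{2if(z)}{z+i}=\frac{z-i}{z+i}f(z)=\varphi(z)f(z).
$$
In particular $\Sigma_0$ is bounded on $H^2(\bbR)$, and since $|\varphi(x)|=1$ on $\bbR$ it is an isometry: $\Sigma_0^*\Sigma_0=I$.

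Second, I would verify the unitary equivalence $W^*\Sigma_0 W=S$. Directly from \eqref{c6},
$$
(\Sigma_0 Wf)(z)=\varphi(z)\cdot\frac{f(\varphi(z))}{\sqrt{\pi}(z+i)}=\frac{(Sf)(\varphi(z))}{\sqrt{\pi}(z+i)}=(W(Sf))(z),
$$
so $\Sigma_0 W=WS$. Since $S$ is a completely non-unitary isometry in $H^2(\bbT)$, the same is true of $\Sigma_0$ in $H^2(\bbR)$.

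Third, I would compute $\Sigma_0\Sigma_0^*$. As $\Sigma_0$ is an isometry, $\Sigma_0\Sigma_0^*=I-P$, where $P$ is the orthogonal projection onto $\Ker\Sigma_0^*$. Using the unitary equivalence and the fact that $\Ker S^*=\bbC\cdot 1$ (the constants), one obtains $\Ker\Sigma_0^*=\bbC\cdot W(1)$. Since
$$
W(1)(z)=\frac{1}{\sqrt{\pi}(z+i)}\quad\text{and}\quad v_i(z)=\frac{1}{2\pi i(-i-z)}=\frac{i}{2\pi(z+i)},
$$
these two vectors are proportional, hence $\Ker\Sigma_0^*=\bbC\cdot v_i$. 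With $\norm{v_i}^2=1/(4\pi\Im i)=1/(4\pi)$, the rank-one projection onto this kernel is $P=4\pi\jap{\cdot,v_i}v_i$, yielding $\Sigma_0\Sigma_0^*=I-4\pi\jap{\cdot,v_i}v_i$.

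The only step requiring a little care is the bookkeeping of constants in the third paragraph (the powers of $\pi$ and $i$ floating around in $W(1)$, $v_i$, and $\norm{v_i}$); everything else is a direct computation, so I do not expect any genuine obstacle.
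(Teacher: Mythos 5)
Your proposal is correct and follows essentially the same route as the paper: identify $\Sigma_0$ as multiplication by $(z-i)/(z+i)$ via the resolvent formula \eqref{b1a}, conjugate by $W$ to get the shift $S$, and transfer the defect relation back, with the constants ($W(1)=-2i\sqrt{\pi}\,v_i$, $\norm{v_i}^2=1/4\pi$) checking out. No issues.
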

\begin{proof}
By \eqref{b1a}, the operator $\Sigma_0$ is the operator of multiplication by the function $\frac{x-i}{x+i}$ in $H^2(\bbR)$. 
Let $W:H^2(\bbT)\to H^2(\bbR)$ be the unitary operator \eqref{c6}. 
Changing the variable, we find that 
$$
W^*\Sigma_0W=S,
$$
i.e. the usual shift operator in $H^2(\bbT)$. In particular, $\Sigma_0$ is a completely non-unitary contraction with defect indices $(0,1)$. For $S$ we have
$$
SS^*=I-\jap{\cdot,\1}\1,
$$
where $\1\in H^2(\bbT)$ is the function identically equal to $1$. By a direct calculation, $W\1=-2\sqrt{\pi}iv_i$, and so \eqref{e8a} follows. 
\end{proof}

%%%%%%%%%%%%%%%%%
\begin{lemma}\label{lma.d4}
%%%%%%%%%%%%%%%%%
The operator $\Sigma_u$ is a contraction, satisfying
$$
\Sigma_u^*\Sigma_u=I, \quad \Sigma_u \Sigma_u^*=I-\jap{\cdot,q}q,
$$
where $q$ is as in Lemma~\ref{lma.d2}; moreover, $\norm{q}=1$.
\end{lemma}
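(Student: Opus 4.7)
Since $\Sigma_u^*\Sigma_u = I$ makes $\Sigma_u$ an isometry, $\Sigma_u\Sigma_u^*$ is automatically the orthogonal projection onto $\Ran\Sigma_u$, and $I - \Sigma_u\Sigma_u^*$ is the orthogonal projection onto $\Ker\Sigma_u^*$. My plan is therefore to (i) prove the isometry identity, (ii) identify $\Ker\Sigma_u^*$ as a one-dimensional subspace, and (iii) verify that $q$ is a unit vector spanning it.

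For (i), writing $T := A + L_u$ with $\Dom T = \Dom A$, the elementary identity
\[
\norm{(T-i)g}^2 - \norm{(T+i)g}^2 = -4\,\Im\jap{Tg,g}
\]
holds for $g \in \Dom T$. Since $L_u$ is self-adjoint and $\jap{Ag,g} = \int x\abs{g}^2\,dx \in \bbR$ for $g \in \Dom A$, the right-hand side vanishes, and the substitution $f = (T+i)g$ yields $\norm{\Sigma_u f} = \norm{f}$. For (ii), closed-range duality gives $\Ker\Sigma_u^* = (\Ran(T-i))^\perp = \Ker(T^*+i) = \Ker(A^*+L_u+i)$. The coercive bound $\norm{(T-i)g} \geq \norm{g}$ shows $T-i$ is injective with closed range, and since $L_u$ is trace-class hence compact, $T-i$ is a compact perturbation of $A-i$. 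By Lemma \ref{lma.d3}, $A-i$ is Fredholm of index $-1$ (we have $\Ker(A-i) = \{0\}$ and $\Ker(A^*+i) = \bbC v_i$). Stability of the Fredholm index under compact perturbations together with the injectivity of $T-i$ then forces $\dim\Ker(A^*+L_u+i) = 1$.

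For (iii), write $q = \sqrt{4\pi}(v_i - w)$ with $w := (A+L_u+i)^{-1}L_u v_i \in \Dom A$. The key inputs are the two identities $A^*v_i = -iv_i$ (immediate from the resolvent formula \eqref{b1a}) and $A^*g = Ag$ for $g \in \Dom A$. A short calculation then gives
\[
(A^*+L_u+i)q = \sqrt{4\pi}\bigl(L_u v_i - (A+L_u+i)w\bigr) = 0.
\]
To obtain $\norm{q}=1$, pair $(A^*+L_u+i)q = 0$ with $q$: using self-adjointness of $L_u$ and taking imaginary parts yields $\Im\jap{A^*q,q} = -\norm{q}^2$. Expanding $\jap{A^*q,q}$ via $A^*v_i = -iv_i$ and $A^*w = Aw$ produces the four terms
\[
-i\cdot 4\pi\norm{v_i}^2 + 4\pi i\,\overline{w(i)} - 4\pi i\, w(i) + 4\pi\jap{Aw,w},
\]
of which the first equals $-i$, the middle two have opposite imaginary parts, and the last is real since $\jap{Aw,w} = \int x\abs{w}^2\,dx$; hence $\Im\jap{A^*q,q} = -1$ and $\norm{q}=1$.

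The main obstacle is the last norm computation: the identity $A^*v_i = -iv_i$, the real-line analogue of $S^*\1 = 0$ on the disc, is the essential ingredient, and the normalizing factor $\sqrt{4\pi}$ in the definition of $q$ is calibrated precisely so that $\sqrt{4\pi}\norm{v_i} = 1$, ensuring unit norm already in the unperturbed case $u=0$.
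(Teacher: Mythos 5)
Your proposal is correct, but it takes a genuinely different route from the paper's. The paper works entirely on the contraction side: it first shows that $2i+L_u(I-\Sigma_0)$ is boundedly invertible (Fredholm alternative plus maximal dissipativity of $A+L_u$), derives the representations $\Sigma_u=(\Sigma_0+\tfrac1{2i}L_u(I-\Sigma_0))(I+\tfrac1{2i}L_u(I-\Sigma_0))^{-1}$ and its mirror image, and then verifies $\Sigma_u^*\Sigma_u=I$ and $\Sigma_u\Sigma_u^*=I-\jap{\cdot,q}q$ by direct algebraic manipulation of these products, using $\Sigma_0^*\Sigma_0=I$ and $\Sigma_0\Sigma_0^*=I-4\pi\jap{\cdot,v_i}v_i$ from Lemma~\ref{lma.d3}; this computation simultaneously \emph{derives} the formula for $q$, and $\norm{q}=1$ is then read off from the coincidence of the nonzero spectra of $\Sigma_u^*\Sigma_u$ and $\Sigma_u\Sigma_u^*$. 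You instead stay on the dissipative-operator side: isometry follows from the observation that $A+L_u$ is actually symmetric ($\Im\jap{(A+L_u)g,g}=0$ on $\Dom A$, which is why the Cayley transform is an isometry and not merely a contraction); the defect index $1$ follows from stability of the Fredholm index of $A-i$ (index $-1$, by $\Ker(A-i)=\{0\}$ and $\Ker(A^*+i)=\bbC v_i$) under the compact perturbation $L_u$; and you then \emph{verify} that the vector $q$ supplied by Lemma~\ref{lma.d2} is a unit vector spanning $\Ker(A^*+L_u+i)=\Ker\Sigma_u^*$. Your route is shorter and more structural -- it explains \emph{why} the defect is rank one -- at the cost of needing the formula for $q$ in advance (which the statement provides) and of invoking index stability for a closed unbounded Fredholm operator under a bounded compact perturbation (standard once $A-i$ is viewed as a bounded operator from $\Dom A$ with the graph norm into $H^2(\bbR)$). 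The individual computations all check out: $A^*v_i=-iv_i$, $A^*w=Aw$ for $w\in\Dom A$, the cancellation $(A^*+L_u+i)q=0$, and the imaginary-part bookkeeping giving $\Im\jap{A^*q,q}=-1$, hence $\norm{q}=1$.
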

\begin{proof}
\emph{Step 1: representation for $\Sigma_u$.} We would like to represent $\Sigma_u$ in terms of $\Sigma_0$ and $L_u$. First we represent the operator $A+L_u$ in terms of $\Sigma_0$ and $L_u$. We recall that 
$$
\Dom(A+L_u)=\Dom A=\Ran (I-\Sigma_0)
$$
and 
$$
A=i(I+\Sigma_0)(I-\Sigma_0)^{-1} \text{ on $\Dom A$.}
$$
It follows that on $\Dom A$ we have
\begin{align*}
A+L_u\pm i
&=
i(I+\Sigma_0)(I-\Sigma_0)^{-1}+L_u\pm i
\\
&=
\bigl(i(I+\Sigma_0)+L_u(I-\Sigma_0)\pm i(I-\Sigma_0)\bigr)(I-\Sigma_0)^{-1}\, .
\end{align*}
Choosing the plus sign in the above expression, we would like to check that the operator in brackets, viz.
$$
\bigl(i(I+\Sigma_0)+L_u(I-\Sigma_0)+ i(I-\Sigma_0)\bigr)
=
2i+L_u(I-\Sigma_0),
$$
has a bounded inverse. Since $L_u$ is compact, by the Fredholm alternative it suffices to check that the kernel is trivial. Suppose $(2i+L_u(I-\Sigma_0))f=0$ for some $f\in H^2(\bbR)$; then for $g=(I-\Sigma_0)f$ we have 
$$
(A+L_u+i)g=(2i+L_u(I-\Sigma_0))(I-\Sigma_0)^{-1}g=(2i+L_u(I-\Sigma_0))f=0, 
$$
which is impossible because $A$ is maximal dissipative and therefore $\Ker(A+L_u+i)$ is trivial. 

Now we compute:
\begin{align}
\Sigma_u
&=(A+L_u-i)(A+L_u+i)^{-1}
\notag
\\
&=
(2i\Sigma_0+L_u(I-\Sigma_0))(I-\Sigma_0)^{-1}
\biggl( (2i+L_u(I-\Sigma_0))(I-\Sigma_0)^{-1}\biggr)^{-1}
\notag
\\
&=
(2i\Sigma_0+L_u(I-\Sigma_0))(2i+L_u(I-\Sigma_0))^{-1}
\notag
\\
&=
(\Sigma_0+\tfrac1{2i}L_u(I-\Sigma_0))
(I+\tfrac1{2i}L_u(I-\Sigma_0))^{-1}.
\label{d3c}
\end{align}
In the same way, starting from 
$$
A=i(I-\Sigma_0)^{-1}(I+\Sigma_0) \text{ on $\Dom A$,}
$$
we find
\begin{equation}
\Sigma_u
=
(I+\tfrac1{2i}(I-\Sigma_0)L_u)^{-1}
(\Sigma_0+\tfrac1{2i}(I-\Sigma_0)L_u)\, ,
\label{d3d}
\end{equation}
where $I+\tfrac1{2i}(I-\Sigma_0)L_u$ has a bounded inverse.

\emph{Step 2: computing $\Sigma_u^*\Sigma_u$.}
Using \eqref{d3c} and \eqref{d3d}, we write 
\begin{equation}
\Sigma_u^*\Sigma_u
=
(I-\tfrac1{2i}(I-\Sigma_0^*)L_u)^{-1}M(I+\tfrac1{2i}L_u(I-\Sigma_0))^{-1}\, ,
\label{d3a}
\end{equation}
where
\begin{align*}
M&=
(\Sigma_0^*-\tfrac1{2i}(I-\Sigma_0^*)L_u)
(\Sigma_0+\tfrac1{2i}L_u(I-\Sigma_0))
\\
&=
\Sigma_0^*\Sigma_0+\tfrac1{2i}\Sigma_0^*L_u(I-\Sigma_0)
-\tfrac1{2i}(I-\Sigma_0^*)L_u\Sigma_0
-\tfrac1{(2i)^2}(I-\Sigma_0^*)L_u^2(I-\Sigma_0).
\end{align*}
Using that $\Sigma_0^*\Sigma_0=I$, we can rewrite this as
\begin{align*}
M&=
I+\tfrac1{2i}\Sigma_0^*L_u
-\tfrac1{2i}L_u\Sigma_0
-\tfrac1{(2i)^2}(I-\Sigma_0^*)L_u^2(I-\Sigma_0)
\\
&=
I-\tfrac1{2i}(I-\Sigma_0^*)L_u
+\tfrac1{2i}L_u(I-\Sigma_0)
-\tfrac1{(2i)^2}(I-\Sigma_0^*)L_u^2(I-\Sigma_0)
\\
&=
(I-\tfrac1{2i}(I-\Sigma_0^*)L_u)
(I+\tfrac1{2i}L_u(I-\Sigma_0))
\end{align*}
and substituting this back into \eqref{d3a}, we find $\Sigma_u^*\Sigma_u=I$.

\emph{Step 3: computing $\Sigma_u\Sigma_u^*$.} 
We have
\begin{equation}
\Sigma_u\Sigma_u^*
=
(I+\tfrac1{2i}(I-\Sigma_0)L_u)^{-1}M_*(I-\tfrac1{2i}L_u(I-\Sigma_0^*))^{-1}
\label{d3b}
\end{equation}
where
\begin{align*}
M_*&=
(\Sigma_0+\tfrac1{2i}(I-\Sigma_0)L_u)
(\Sigma_0^*-\tfrac1{2i}L_u(I-\Sigma_0^*))
\\
&=
\Sigma_0\Sigma_0^*+\tfrac1{2i}(I-\Sigma_0)L_u\Sigma_0^*
-\tfrac1{2i}\Sigma_0L_u(I-\Sigma_0^*)
-\tfrac1{(2i)^2}(I-\Sigma_0)L_u^2(I-\Sigma_0^*).
\end{align*}
Using that $\Sigma_0\Sigma_0^*=I-4\pi\jap{\cdot,v_i}v_i$, we rewrite this as
\begin{align*}
M_*&=
I-4\pi\jap{\cdot,v_i}v_i+\tfrac1{2i}L_u\Sigma_0^*
-\tfrac1{2i}\Sigma_0L_u
-\tfrac1{(2i)^2}(I-\Sigma_0)L_u^2(I-\Sigma_0^*)
\\
&=
I-4\pi\jap{\cdot,v_i}v_i
-\tfrac1{2i}L_u(I-\Sigma_0^*)
+\tfrac1{2i}(I-\Sigma_0)L_u
-\tfrac1{(2i)^2}(I-\Sigma_0)L_u^2(I-\Sigma_0^*)
\\
&=
(I+\tfrac1{2i}(I-\Sigma_0)L_u)(I-\tfrac1{2i}L_u(I-\Sigma_0^*))
-4\pi\jap{\cdot,v_i}v_i.
\end{align*}
Substituting back into \eqref{d3b}, we find
$$
\Sigma_u\Sigma_u^*=I-\jap{\cdot,q}q,
$$
with 
\begin{align*}
q&=\sqrt{4\pi}(I+\tfrac1{2i}(I-\Sigma_0)L_u)^{-1}v_i
=
2i\sqrt{4\pi}(A+L_u+i)^{-1}(I-\Sigma_0)^{-1}v_i
\\
&=\sqrt{4\pi}(A+L_u+i)^{-1}(A+i)v_i
=\sqrt{4\pi}(I-(A+L_u+i)^{-1}L_u)v_i,
\end{align*}
as required. 

\emph{Step 4: computing the norm of $q$.} 
Since $\Sigma_u \Sigma_u^*\geq0$, we have $\norm{q}\leq1$. Since the spectra of $\Sigma_u^*\Sigma_u$ and $\Sigma_u \Sigma_u^*$ outside zero coincide and $q\not=0$, we find that $\norm{q}=1$.
\end{proof}

\subsection{Rewriting $\norm{\Phi(t)u}$ in terms of $\Sigma_u$}
We would like to recast the statement of Theorem~\ref{thm.d1} in terms of the contraction $\Sigma_u$. 

%%%%%%%%%%%%%%%%%%
\begin{lemma}\label{lma.d5}
%%%%%%%%%%%%%%%%%%
Let $\Phi(t)$ be defined as in \eqref{d2}, and $q$ be as in Lemma~\ref{lma.d2}. Then for any $u\in H^2(\bbR)$, we have
\begin{equation}
\sup_{y>0}
\int_{-\infty}^\infty \Abs{(\Phi(t)u)(x+iy)}^2dx
=
\sup_{0<r<1}\int_{-\pi}^\pi \abs{\jap{(I+re^{i\theta}\Sigma_u^*)^{-1}e^{-itH_u^2}u,q}}^2\frac{d\theta}{2\pi}.
\label{d4}
\end{equation}
\end{lemma}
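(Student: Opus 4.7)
The plan is to convert the formula from Lemma~\ref{lma.d2} into a form that exhibits $\Phi(t)u$ as the image under the unitary $W$ from \eqref{c6} of an explicit holomorphic function on $\bbD$ built from the contraction $\Sigma_u^*$. The algebraic engine is the Cayley transform identity between $A^*+L_u$ and $\Sigma_u^*$.

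I will first derive the key identity
$$
(A^*+L_u-i)(A^*+L_u-z)^{-1} = (1-\zeta)(I-\zeta\Sigma_u^*)^{-1}
$$
valid on all of $H^2(\bbR)$, where $\zeta:=\varphi(z)=(z-i)/(z+i)\in\bbD$. From the definition \eqref{e8aa} of $\Sigma_u$ and the elementary relation $(A^*+L_u+i)=(A^*+L_u-i)+2i$, one reads off $\Sigma_u^* = I + 2i(A^*+L_u-i)^{-1}$, so $\Sigma_u^*$ and $(A^*+L_u-i)^{-1}$ commute. A short computation using $z-i = 2i\zeta/(1-\zeta)$ then yields on $\Dom A^*$ the factorisation
$$
I - \zeta\Sigma_u^* = (1-\zeta)(A^*+L_u-i)^{-1}(A^*+L_u-z).
$$
Inverting this and using the commutativity of the two resolvents gives the displayed identity on $\Dom A^*$, and hence on all of $H^2(\bbR)$ by density, since both sides are bounded operators.

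Substituting the identity into the formula of Lemma~\ref{lma.d2} produces
$$
(\Phi(t)u)(z) = \frac{1-\zeta}{\sqrt{4\pi}}\,F(\zeta), \qquad F(\zeta) := \jap{(I-\zeta\Sigma_u^*)^{-1} e^{-itH_u^2} u,\, q},
$$
and $F$ is holomorphic on $\bbD$ because $\|\zeta\Sigma_u^*\|<1$ there. I will then use the identity $(z+i)(1-\zeta)=2i$ together with the formula \eqref{c6} for $W$ to recognise $\Phi(t)u = W(iF)$. Since $W:H^2(\bbT)\to H^2(\bbR)$ is a unitary bijection and is injective on the larger space of all holomorphic functions (because $\varphi$ maps $\bbC_+$ onto $\bbD$), this gives
$$
\sup_{y>0}\int_{\bbR}|(\Phi(t)u)(x+iy)|^2\,dx \;=\; \sup_{0<r<1}\int_{-\pi}^{\pi}|F(re^{i\theta})|^2\,\frac{d\theta}{2\pi}
$$
as elements of $[0,+\infty]$, with both sides simultaneously finite or infinite.

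Finally, to match the sign on the right-hand side of \eqref{d4}, I will perform the change of variable $\theta\mapsto\theta+\pi$ in the last integral; this converts $I-re^{i\theta}\Sigma_u^*$ into $I+re^{i\theta}\Sigma_u^*$ without altering the integral, by $2\pi$-periodicity. The main point requiring care is the algebraic identification of the Cayley form $(A^*+L_u-i)(A^*+L_u-z)^{-1}=(1-\zeta)(I-\zeta\Sigma_u^*)^{-1}$ and its correct extension from $\Dom A^*$ to all of $H^2(\bbR)$; once this is in hand, the remainder is a bookkeeping exercise through the unitary $W$.
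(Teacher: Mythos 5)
Your proof is correct and follows essentially the same route as the paper: the Cayley-transform identity $(A^*+L_u-i)(A^*+L_u-z)^{-1}=\tfrac{2i}{z+i}(I-\zeta\Sigma_u^*)^{-1}$ with $\zeta=\tfrac{z-i}{z+i}$, substitution into Lemma~\ref{lma.d2}, and the standard correspondence (via the conformal factor $\tfrac1{\sqrt{\pi}(z+i)}$) between Hardy norms on $\bbC_+$ and on $\bbD$. Your sign $I-\zeta\Sigma_u^*$ is in fact the correct one (the paper's intermediate display writes $I+\zeta\Sigma_u^*$), and your rotation $\theta\mapsto\theta+\pi$ is precisely what reconciles this with the stated form of \eqref{d4}.
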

\begin{proof}
We use the following well-known statement. 
Let $G=G(z)$ be a function holomorphic in the open upper half-plane and $g=g(\zeta)$ be a function holomorphic in the open unit disc such that
$$
G(z)=\tfrac1{\sqrt{\pi}}\tfrac1{z+i}g\left(\tfrac{z-i}{z+i}\right), \quad z\in\bbC_+.
$$
Then 
$$
\sup_{y>0}\int_{-\infty}^\infty \abs{G(x+iy)}^2dx
=
\sup_{0<r<1}\int_{-\pi}^\pi \abs{g(re^{i\theta})}^2\frac{d\theta}{2\pi}\, .
$$
Using the definition of $\Sigma_u$ as a Cayley transform of $A+L_u$, we compute
$$
(A^*+L_u-i)(A^*+L_u-z)^{-1}=\tfrac{2i}{z+i}(I+\tfrac{z-i}{z+i}\Sigma_u^*)^{-1}, 
$$
and therefore, by Lemma~\ref{lma.d2},
\[
(\Phi(t)u)(z)=\tfrac1{\sqrt{4\pi}}\tfrac{2i}{z+i}\jap{(I+\tfrac{z-i}{z+i}\Sigma_u^*)^{-1}e^{-itH_u^2}u,q}.
\]
Now taking 
$$
G(z)=(\Phi(t)u)(z)\quad\text{ and }\quad 
g(\zeta)=i\jap{(I+\zeta\Sigma_u^*)^{-1}e^{-itH_u^2}u,q},
$$
we obtain the required identity. 
\end{proof}

\subsection{Discussion: direction of further exposition}\label{sec.e4}
In order to prove Theorem~\ref{thm.d1}, we need to check that the r.h.s. in \eqref{d4} equals $\norm{u}^2$. 
Denote $p=e^{-itH_u^2}u$.
For the integrand in the right hand side of \eqref{d4}, we have
$$
\jap{(I+re^{i\theta}\Sigma_u^*)^{-1}p,q}
=
\sum_{n=0}^\infty (re^{i\theta})^n\jap{p,\Sigma_u^n q}
$$
and therefore 
$$
\sup_{r<1}\int_{-\pi}^\pi \abs{\jap{(I+re^{i\theta}\Sigma_u^*)^{-1}p,q}}^2\frac{d\theta}{2\pi}
=
\sum_{n=0}^\infty \abs{\jap{p,\Sigma_u^nq}}^2.
$$
Thus, our task is to show that 
$$
\sum_{n=0}^\infty \abs{\jap{p,\Sigma_u^nq}}^2
=
\norm{p}^2, \quad p=e^{-itH_u^2}u.
$$
From Lemma~\ref{lma.d4} we find that $\norm{\Sigma_u^m q}=\norm{q}=1$ for all $m$ and $\Sigma_u^*q=0$. 
It follows that for $m>n\geq0$
$$
\jap{\Sigma_u^m q,\Sigma_u^n q}=\jap{\Sigma_u^{m-n}q,q}=\jap{q,(\Sigma_u^*)^{m-n}q}=0,
$$
and therefore $\{\Sigma_u^m q\}_{m=0}^\infty$ is an orthonormal set in the Hardy space $H^2(\bbR)$.  
We need to prove that $p$ belongs to the closed linear span of this orthonormal set.

Below we use the Wold decomposition for $\Sigma_u$, see Theorem~\ref{thm.Wold} in the Appendix. 
By this theorem, it suffices to prove that $p$ belongs to the c.n.u. subspace in the Wold decomposition \eqref{e8} of $\Sigma_u$. 
Now observe that 
$$
p=e^{-itH_u^2}u=\lim_{\eps\to0_+}e^{-itH_u^2}H_u\chi_\eps.
$$
This suggests considering the operator $\calH_u:=e^{-itH_u^2}H_u$ in this context. 
In the next section, we study $\calH_u$ and prove that it satisfies a simple commutation relation with $\Sigma_u$. Using this and some abstract results from \cite{GP}, in Section~\ref{sec.g} we will prove that $\calH_u$ is \emph{diagonal} in the Wold decomposition \eqref{e8} of $\Sigma_u$. From here it will be very easy to conclude that $p$ is in the c.n.u. subspace of $\Sigma_u$, as required.

%%%%%%%%%%%%%%%%%%%%%%%%%%%%%%%%%%%%%%%%%
%%%%%%%%%%%%%%%%%%%%%%%%%%%%%%%%%%%%%%%%%
\section{A commutation relation between $\Sigma_u$ and $\calH_u$}
%%%%%%%%%%%%%%%%%%%%%%%%%%%%%%%%%%%%%%%%%
%%%%%%%%%%%%%%%%%%%%%%%%%%%%%%%%%%%%%%%%%

\subsection{The auxiliary operator $\calH_u$}
Let $u\in H^2(\bbR)$. Motivated by the previous discussion, here we introduce an auxiliary anti-linear operator whose properties mirror those of $H_u$. We set 
\begin{equation}
\calH_u=e^{-itH_u^2}H_u=H_ue^{itH_u^2}, \quad \Dom \calH_u=\Dom H_u.
\label{e1a}
\end{equation}
For the ease of further reference, we state the properties of $\calH_u$ as a lemma which mirrors Theorem~\ref{thm.c1}.
%%%%%%%%%%%%%%%%%
\begin{lemma}\label{lma.e6}
%%%%%%%%%%%%%%%%%
Let $u\in H^2(\bbR)$, and let $\calH_u$ be the anti-linear operator defined by \eqref{e1a}. 
Then: 
\begin{enumerate}[\rm (i)]
\item
$\calH_u$ is closed, i.e. $\Dom \calH_u$ is closed with respect to the graph norm of $\calH_u$;
\item
the set of rational functions $\calR$ is dense in $\Dom \calH_u$ with respect to the graph norm;
\item
for any $f,g\in\Dom \calH_u$, we have 
$$
\jap{\calH_uf,g}=\jap{\calH_ug,f};
$$
\item
suppose for some $f,h\in H^2(\bbR)$  we have 
$$
\jap{\calH_ug,f}=\jap{h,g}, \quad \forall g\in\Dom\calH_u.
$$
Then $f\in\Dom\calH_u$ and $\calH_uf=h$. 
\end{enumerate}
\end{lemma}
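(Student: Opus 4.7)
My plan is to derive Lemma~\ref{lma.e6} directly from Theorem~\ref{thm.c1} by transferring each statement through the unitary group $e^{itH_u^2}$. The crucial preliminary fact is that $e^{itH_u^2}$ preserves $\Dom H_u$: since $H_u^2$ is self-adjoint and positive semi-definite (as recorded before Theorem~\ref{thm.c2}) with form domain equal to $\Dom H_u$, the unitary group $e^{itH_u^2}$ commutes with the functional calculus of $H_u^2$ and in particular maps $\Dom H_u=\Dom(H_u^2)^{1/2}$ bijectively onto itself. Anti-linearity of $H_u$ together with a term-by-term power series argument (justified on analytic vectors for $H_u^2$ and then extended by closure) then yields the intertwining identity
$$
H_u e^{itH_u^2}=e^{-itH_u^2}H_u\quad\text{on }\Dom H_u,
$$
which confirms that the two expressions in~\eqref{e1a} define the same operator on the same domain.

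Given these preliminaries, parts (i) and (ii) are immediate. Unitarity of $e^{-itH_u^2}$ gives
$$
\norm{\calH_u f}^2=\norm{e^{-itH_u^2}H_u f}^2=\norm{H_u f}^2,\quad f\in\Dom H_u,
$$
so $\Dom\calH_u$ equipped with its graph norm is literally identical as a normed space to $\Dom H_u$ with its graph norm. Consequently closedness of $\calH_u$ and the density of $\calR$ in $\Dom\calH_u$ follow at once from Theorem~\ref{thm.c1}(i)--(ii).

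For (iii), given $f,g\in\Dom\calH_u=\Dom H_u$, the plan is to write
$$
\jap{\calH_u f,g}=\jap{e^{-itH_u^2}H_u f,g}=\jap{H_u f,e^{itH_u^2}g},
$$
and then apply Theorem~\ref{thm.c1}(iii) to the pair $f,\,e^{itH_u^2}g\in\Dom H_u$, yielding $\jap{H_u f,e^{itH_u^2}g}=\jap{H_u(e^{itH_u^2}g),f}=\jap{\calH_u g,f}$. For (iv), the hypothesis $\jap{\calH_u g,f}=\jap{h,g}$ for all $g\in\Dom\calH_u$ is reparametrised by setting $g_1:=e^{itH_u^2}g$, which ranges over $\Dom H_u$ bijectively as $g$ does; this recasts the relation as
$$
\jap{H_u g_1,f}=\jap{e^{itH_u^2}h,g_1}\quad\text{for every }g_1\in\Dom H_u.
$$
Theorem~\ref{thm.c1}(iv) then furnishes $f\in\Dom H_u=\Dom\calH_u$ together with $H_u f=e^{itH_u^2}h$, which rearranges to $\calH_u f=e^{-itH_u^2}H_u f=h$.

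The only nontrivial ingredient in the whole argument is the pair of facts that $e^{itH_u^2}$ preserves $\Dom H_u$ and satisfies $H_u e^{itH_u^2}=e^{-itH_u^2}H_u$ there; I expect the (mild) obstacle to be the careful justification of the intertwining identity on the full domain of an unbounded anti-linear operator, which I would handle either by the density of analytic vectors for $H_u^2$ or, alternatively, by verifying that both curves $t\mapsto H_u e^{itH_u^2}f$ and $t\mapsto e^{-itH_u^2}H_u f$ solve the same Cauchy problem $\dot y=-iH_u^2 y$, $y(0)=H_u f$. Once these preliminaries are secured, the rest of the lemma is a purely formal bookkeeping transfer from Theorem~\ref{thm.c1}.
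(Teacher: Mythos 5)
Your proposal is correct and follows essentially the same route as the paper: both transfer each item of Theorem~\ref{thm.c1} to $\calH_u$ using the unitarity of $e^{itH_u^2}$ and the identity $H_u e^{itH_u^2}=e^{-itH_u^2}H_u$ on $\Dom H_u$. The only difference is that you carefully justify this intertwining relation and the invariance of $\Dom H_u$ under $e^{itH_u^2}$ (via $\Dom H_u=\Dom(H_u^2)^{1/2}$), facts the paper builds silently into the definition \eqref{e1a}; this is a welcome addition rather than a divergence.
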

\begin{proof}
Since $H_u$ is closed, it is clear that $\calH_u$ is closed as well. 
Item (ii) follows directly from Theorem~\ref{thm.c1}(ii). 
Next, using Theorem~\ref{thm.c1}(iii), we find 
$$
\jap{\calH_u f,g}
=
\jap{e^{-itH_u^2}H_uf,g}
=
\jap{H_uf,e^{itH_u^2}g}
=
\jap{H_ue^{itH_u^2}g,f}
=
\jap{\calH_u g,f}
$$
for all $f,g\in\Dom \calH_u$.
Finally, under the hypothesis of (iv) we have
$$
\jap{H_ug,e^{itH_u^2}f}=\jap{e^{-itH_u^2}H_ug,f}=\jap{h,g}
$$
and so from Theorem~\ref{thm.c1}(iv) we find that $e^{itH_u^2}f\in\Dom H_u$ and 
$H_ue^{itH_u^2}f=h$, as required. 
\end{proof}

\subsection{The main result of this section}

Our task in this section is to prove the following commutation relation between $\calH_u$ and $\Sigma_u$. 
%%%%%%%%%%%%%%%%
\begin{theorem}\label{thm.e1}
%%%%%%%%%%%%%%%%
We have $\Sigma_u\Dom \calH_u\subset\Dom \calH_u$ and on $\Dom \calH_u$  the relation
\begin{equation}
\Sigma_u^*\calH_u=\calH_u\Sigma_u
\label{e1b}
\end{equation}
holds true.
\end{theorem}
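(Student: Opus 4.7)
The plan is to reduce $\Sigma_u^*\calH_u=\calH_u\Sigma_u$ to a single resolvent identity, prove that identity first for rational symbols using the Lax pair, and then extend to $u\in H^2(\bbR)$ by approximation. Writing $\Sigma_u=I-2i(A+L_u+i)^{-1}$ and $\Sigma_u^*=I+2i(A^*+L_u-i)^{-1}$, and using the anti-linearity of $\calH_u$ (which turns the scalar $-2i$ into $+2i$), the sought commutation relation is equivalent to the single ``pre-Cayley'' resolvent identity
\[
(\star)\qquad \calH_u(A+L_u+i)^{-1}g=(A^*+L_u-i)^{-1}\calH_u g,\quad g\in\Dom\calH_u.
\]
Once $(\star)$ is available on the dense subset $\calR\subset\Dom\calH_u$, both the invariance $\Sigma_u\Dom\calH_u\subset\Dom\calH_u$ and the full commutation follow from Lemma~\ref{lma.e6}(i)--(ii): given $f\in\Dom\calH_u$, choose $f_n\in\calR$ with $f_n\to f$ and $\calH_uf_n\to\calH_uf$; then $\Sigma_uf_n\to\Sigma_uf$ and $\calH_u\Sigma_uf_n=\Sigma_u^*\calH_uf_n\to\Sigma_u^*\calH_uf$, and the closedness of $\calH_u$ gives the conclusion.

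For $u\in\calR$ I would prove $(\star)$ directly via the Lax-pair structure. The unitary family $U(t)$ from the proof of Theorem~\ref{Szegoline} satisfies $U(t)^*H_{u_t}U(t)=H_u$ and $U(t)^*AU(t)=V^{-1}(A+L_u)V$ with $V:=e^{-itH_u^2/2}$. The classical Hankel/shift commutation $A^*H_{u_t}=H_{u_t}A$ on $\Dom A$, obtained by differentiating $S_\tau^*H_{u_t}=H_{u_t}S_\tau$ at $\tau=0^+$ (the anti-linearity of $H_{u_t}$ cancels the factors of $i$ produced by the generators of the two semigroups), transports under this conjugation to
\[
V^{-1}(A^*+L_u)V\,H_u=H_u\,V^{-1}(A+L_u)V.
\]
Since $V$ is a power series in $H_u^2$ with complex coefficients and $H_u$ is anti-linear, one has $H_uV=V^{-1}H_u$, hence $VH_uV^{-1}=V^2H_u=\calH_u$. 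Multiplying the displayed identity on the left by $V$ and on the right by $V^{-1}$ cleans it up to $(A^*+L_u)\calH_u=\calH_u(A+L_u)$ on $\Dom A$, and the anti-linear Cayley computation (whereby $\calH_u$ swaps $+i$ and $-i$) then produces $(\star)$.

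The extension to $u\in H^2(\bbR)$ goes via approximation $u_m\to u$ in $H^2(\bbR)$ with $u_m\in\calR$. For fixed $g\in\calR$, $\calH_{u_m}g\to\calH_u g$ in $H^2$ because $H_{u_m}g\to H_u g$ in $L^2$ (using boundedness of $\overline g$ on $\bbR$) and $e^{-itH_{u_m}^2}\to e^{-itH_u^2}$ strongly, via Theorem~\ref{thm.c2} and \cite[Theorem~VIII.20]{RS1}; the resolvents $(A+L_{u_m}+i)^{-1}$ and $(A^*+L_{u_m}-i)^{-1}$ converge in operator norm by the resolvent identity and Lemma~\ref{lma.d6}. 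Setting $h_m:=(A+L_{u_m}+i)^{-1}g$ and $h:=(A+L_u+i)^{-1}g$, the right-hand side of $(\star)_{u_m}$ converges to $(A^*+L_u-i)^{-1}\calH_u g$; hence so does $\calH_{u_m}h_m$, while $h_m\to h$. To deduce $h\in\Dom\calH_u$ and $\calH_uh=(A^*+L_u-i)^{-1}\calH_u g$, I would establish a graph-stability lemma for Hankel operators: if $u_m\to u$ in $H^2(\bbR)$, $h_m\to h$ in $H^2(\bbR)$ and $H_{u_m}h_m\to k$ in $H^2(\bbR)$, then $h\in\Dom H_u$ and $H_uh=k$. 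This is proved directly from $H_{u_m}h_m=\bbP(u_m\overline{h_m})$, using $u_m\overline{h_m}\to u\overline h$ in $L^1$ and the continuity of the Fourier transform $L^1\to C_0$ to identify the positive-frequency part of $\widehat{u\overline h}$ with $\widehat k$.

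The main obstacle is this graph-stability step. Theorem~\ref{thm.c2} only supplies strong resolvent convergence of the linear squares $H_{u_m}^2$, which is not enough to pass to the limit in $\calH_{u_m}h_m$ when both $u_m$ and $h_m$ vary; a direct Fourier argument at the level of the defining formula $H_uh=\bbP(u\overline h)$ is required. Everything else is essentially bookkeeping around the Cayley transform and the interplay of the anti-linearity of $\calH_u$ with the $\pm i$ shifts in $A\pm i$ and $A^*\pm i$.
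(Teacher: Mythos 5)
Your proposal is correct, and it shares the paper's overall skeleton: establish the resolvent intertwining $(A^*+L_u-i)^{-1}\calH_u g=\calH_u(A+L_u+i)^{-1}g$ first for rational data, then pass to general $u\in H^2(\bbR)$ by approximating the symbol, and finally extend from $\calR$ to all of $\Dom\calH_u$ using closedness and graph-norm density. The two pivotal steps, however, are carried out by genuinely different means. For rational $u$, the paper computes $[A^*,H_u^2]H_u$ explicitly via Lemma~\ref{bracket0} and matches it against $\calH_uL_u-L_u\calH_u$ computed from the integral formula \eqref{d1}; you instead transport the classical intertwining $A^*H_{u_t}=H_{u_t}A$ through the conjugation $U(t)^*A^*U(t)=e^{i\frac t2H_{u_0}^2}(A^*+L_{u_0}(t))e^{-i\frac t2H_{u_0}^2}$ already established in the proof of Theorem~\ref{Szegoline}, using $H_ue^{-i\frac t2 H_u^2}=e^{i\frac t2 H_u^2}H_u$. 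This is slicker but less self-contained: it silently uses that $u_t$ remains rational (so that $\Ran H_{u_t}\subset\Dom A^*$ and the analogue of Lemma~\ref{lma.g2} applies to $u_t$), and it requires the domain identity $U(t)^*\Dom A=e^{i\frac t2 H_{u_0}^2}\Dom A$ implicit in the conjugation formula; both points are fillable. For the limit $u_m\to u$, the paper avoids any joint graph-convergence by pairing the approximate identity against a fixed rational $g$ and moving $\calH_{u_n}$ onto $g$ via the symmetry (Lemma~\ref{lma.e6}(iii)), then invoking the weak characterization (iv); your graph-stability lemma (if $u_m\to u$ and $h_m\to h$ in $H^2$ and $H_{u_m}h_m\to k$ in $H^2$, then $h\in\Dom H_u$ and $H_uh=k$), proved from $u_m\overline{h_m}\to u\overline h$ in $L^1$, uniform convergence of the Fourier transforms, and identification with the $L^2(0,\infty)$ limit, attacks the convergence head-on; it is correct and arguably more robust. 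Your final extension step by density and closedness is equivalent in effect to the paper's use of symmetry plus Lemma~\ref{lma.e6}(iv).
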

Proving this requires some preparation. Essentially, this is a consequence of the analogous formula for $H_u$ and $\Sigma_0$, 
\begin{equation}
\Sigma_0^*H_u=H_u\Sigma_0
\label{e2}
\end{equation}
(see Remark~\ref{rmk.e2} below),
the definitions of operators $\Sigma_u$, $\calH_u$ and properties of $\Sigma_0$. We will also use an approximation argument, because it is easier to deal with bounded Hankel operators until the last moment. 

For much of this section, we deal with Hankel operators $H_u$ with rational symbols $u\in\calR$. 
By a version of Kronecker's theorem (a detailed proof can be found, e.g. in \cite[Theorem 2.1]{OP1}) in this case we have
$$
\Ran H_u\subset \calR, \quad u\in\calR.
$$
In particular, $\Ran H_u\subset\Dom A^*$ and so the operator $A^*H_u$ is well-defined and bounded on $H^2(\bbR)$. 

\subsection{Commutation relations between $A$ and $H_u$}

%%%%%%%%%%%%%%%
\begin{lemma}\label{lma.g2}
%%%%%%%%%%%%%%%
Let $u\in\calR$; on $\Dom A$ we have the relation 
$$
A^*H_u=H_uA. 
$$
\end{lemma}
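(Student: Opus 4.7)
The plan is to verify the identity by testing it against a dense subset of $H^2(\bbR)$ and performing a direct real-variable computation. First I want to confirm that both sides of the proposed identity make sense. Since $u\in\calR$ has all poles in the lower half-plane, it is bounded on $\bbR$, so $H_u$ is a bounded operator on $H^2(\bbR)$. By the Kronecker-type result quoted in the paragraph preceding the lemma, $\Ran H_u\subset \calR\subset\Dom A^*$, so $A^*H_uf$ is a well-defined element of $H^2(\bbR)$ for any $f\in H^2(\bbR)$. On the other hand, for $f\in\Dom A$ we have $Af=zf\in H^2(\bbR)$, hence $H_uAf\in H^2(\bbR)$. Thus the claimed equality is between two genuine elements of $H^2(\bbR)$.

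The strategy is to test both sides against vectors $h\in\Dom A$, which is dense in $H^2(\bbR)$. The proof then reduces to moving multiplication by the real variable $x$ from one factor to the other inside an $L^2$-integral, exploiting (a) the defining property of the adjoint $A^*$, (b) the fact that the orthogonal projection $\bbP$ acts trivially inside an $L^2$ pairing against an element of $H^2(\bbR)$, and (c) the symmetry of the projection. No abstract machinery is needed; this is purely a bookkeeping computation with domains.

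The key calculation reads, for $f,h\in\Dom A$:
\begin{align*}
\jap{A^*H_uf,h}
&=\jap{H_uf,Ah}\qquad\text{(since $H_uf\in\calR\subset\Dom A^*$)}\\
&=\jap{\bbP(u\overline{f}),Ah}=\jap{u\overline{f},Ah}_{L^2}\qquad\text{(since $Ah\in H^2(\bbR)$)}\\
&=\int_{-\infty}^\infty u(x)\overline{f(x)}\,\overline{xh(x)}\,dx
=\int_{-\infty}^\infty u(x)\,\overline{xf(x)}\,\overline{h(x)}\,dx\\
&=\jap{u\overline{Af},h}_{L^2}=\jap{\bbP(u\overline{Af}),h}=\jap{H_uAf,h},
\end{align*}
where in the middle equality I used only that $x\in\bbR$ so $\overline{xh(x)}=x\overline{h(x)}$ and similarly for $\overline{xf(x)}$.

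Since $\Dom A$ is dense in $H^2(\bbR)$ and both $A^*H_uf$ and $H_uAf$ lie in $H^2(\bbR)$, this equality of pairings on a dense set forces $A^*H_uf=H_uAf$, completing the proof. The only point requiring a moment's attention is the tracking of domains, but the hypothesis $u\in\calR$ (giving boundedness of $H_u$ together with $\Ran H_u\subset\calR\subset\Dom A^*$) makes everything go through cleanly; I do not anticipate any real obstacle.
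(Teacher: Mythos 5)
Your proof is correct, and it runs on the same engine as the paper's: the antilinearity of $H_u$ together with the fact that $A$ acts as multiplication by the real boundary variable, so that the factor $x$ slides across the complex conjugation inside the $L^2$ pairing. The difference is one of packaging. The paper first recasts the claim as the bounded-operator identity $H_u(A+i)^{-1}=(A^*-i)^{-1}H_u$ on all of $H^2(\bbR)$ and verifies it by pairing with the reproducing kernels $v_z$, using that $(A+i)^{-1}$ is itself a multiplication operator; this sidesteps all domain bookkeeping. You instead work with the unbounded operators directly, justify $\Ran H_u\subset\calR\subset\Dom A^*$ via the Kronecker-type theorem quoted just before the lemma, and test the identity against the dense subspace $\Dom A$ using the definition of the adjoint. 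Both routes are sound and all your domain and integrability checks go through; your version is marginally more elementary, while the paper's resolvent formulation is closer in form to the bounded commutation identities it manipulates in the subsequent approximation arguments.
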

\begin{proof}
It suffices to prove the identity
\begin{equation}
H_u(A+i)^{-1}f=(A^*-i)^{-1}H_uf
\label{e3}
\end{equation}
for all $f\in H^2(\bbR)$. Let us evaluate the inner products with the reproducing kernel $v_z$, $\Im z>0$:
\begin{align*}
\jap{H_u(A+i)^{-1}f,v_z}_{H^2}
&-
\jap{(A^*-i)^{-1}H_uf,v_z}_{H^2}
\\
=&
\jap{u\overline{(A+i)^{-1}f},v_z}_{L^2}
-
\jap{H_uf, (A+i)^{-1}v_z}_{H^2}
\\
=&
\jap{u,v_z(A+i)^{-1}f}_{H^2}
-
\jap{u\overline{f},(A+i)^{-1}v_z}_{L^2}
\\
=&
\jap{u,v_z(A+i)^{-1}f}_{H^2}
-
\jap{u,f(A+i)^{-1}v_z}_{H^2}.
\end{align*}
Using \eqref{b1a}, we observe that $(A+i)^{-1}$ commutes with multiplication, and therefore the right hand side here vanishes, as required.
\end{proof}
%%%%%%%%%%%%%%%%
\begin{remark}\label{rmk.e2}
%%%%%%%%%%%%%%%%
Expressing $\Sigma_0$ in terms of the resolvent of $A$, from \eqref{e3} one easily obtains \eqref{e2}.
However, we will not need \eqref{e2} in what follows. 
\end{remark}

\begin{lemma}
Let $u\in\calR$; we have the identity
$$
[A^*,H_u^2]H_u
=
\frac{i}{2\pi}(\jap{u,\cdot}H_uu-\jap{H_uu,\cdot}u).
$$
\end{lemma}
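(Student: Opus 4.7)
The plan is to expand $[A^*,H_u^2]$ by writing $H_u^2=T_{|u|^2}-T_uT_{\overline u}$, the operator identity already established inside the proof of Lemma~\ref{bracket}, and then to evaluate each of the two Toeplitz commutators via Lemma~\ref{bracket0}. Before doing so, I note that for $u\in\calR$ the symbol is bounded, so $|u|^2\in L^2\cap L^\infty$ and Lemma~\ref{bracket0} applies; moreover, by Kronecker's theorem $\Ran H_u\subset\calR\subset\Dom A^*$, so the composition $[A^*,H_u^2]H_ug$ is unambiguously defined for every $g\in H^2(\bbR)$.

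First, Lemma~\ref{bracket0} applied with $b=|u|^2$ yields
$$
[A^*,T_{|u|^2}]f=\frac{i}{2\pi}I_+(f)\,\bbP(|u|^2)=\frac{i}{2\pi}I_+(f)\,H_uu,
$$
since $\bbP(u\overline u)=H_uu$ by definition of the Hankel operator. The companion computation already carried out at the end of the proof of Lemma~\ref{bracket} (Lemma~\ref{bracket0} applied twice, using that $\bbP\overline u=0$ for $u\in H^2$) gives
$$
[A^*,T_uT_{\overline u}]f=\frac{i}{2\pi}\jap{f,u}\,u.
$$
Subtracting yields the master formula
$$
[A^*,H_u^2]f=\frac{i}{2\pi}\bigl(I_+(f)\,H_uu-\jap{f,u}\,u\bigr),\qquad f\in\Dom A^*.
$$

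The final step is to substitute $f=H_ug$ with $g\in H^2(\bbR)$ and identify the two scalars. The second is already in the desired form after using the symmetry of $H_u$ (Theorem~\ref{thm.c1}(iii)), which gives $\jap{H_ug,u}=\jap{H_uu,g}$. For the first, I claim $I_+(H_ug)=\jap{u,g}$; this follows from the defining limit \eqref{b4} together with the same symmetry,
$$
I_+(H_ug)=\lim_{\eps\to0^+}\jap{H_ug,\chi_\eps}=\lim_{\eps\to0^+}\jap{H_u\chi_\eps,g}=\jap{u,g},
$$
where in the last step I use that $\overline{\chi_\eps}\to 1$ boundedly pointwise, so by dominated convergence $H_u\chi_\eps=\bbP(u\overline{\chi_\eps})\to\bbP u=u$ in $H^2(\bbR)$. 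Combining everything delivers the stated identity.

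I expect the only mildly delicate point to be the evaluation $I_+(H_ug)=\jap{u,g}$; everything else is substitution and careful bookkeeping of the linear versus anti-linear factors (both sides of the claimed identity are anti-linear in $g$, as they should be since $H_u$ is anti-linear and $[A^*,H_u^2]$ is linear).
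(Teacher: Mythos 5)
Your proof is correct and follows essentially the same route as the paper: the decomposition $H_u^2=T_{|u|^2}-T_uT_{\overline u}$ combined with Lemma~\ref{bracket0}, then evaluation of the resulting $I_+$ functionals (the paper computes $I_+(H_uf)$ and $I_+(T_{\overline u}H_uf)$ directly by unwinding the inner products with $\chi_\eps$, while you reach the same scalars via the symmetry of $H_u$ and the limit $H_u\chi_\eps\to u$ — a cosmetic difference only).
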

\begin{proof}
Let us use the identity $H_u^2=T_{\abs{u}^2}-T_uT_{\overline u}$ and Lemma~\ref{bracket0}.
We find 
\begin{align*}
[A^*,H_u^2]H_uf
&=
[A^*,T_{\abs{u}^2}]H_uf-[A^*,T_uT_{\overline u}]H_uf
\\
&=
[A^*,T_{\abs{u}^2}]H_uf-[A^*,T_u]T_{\overline u}H_uf
\\
&=
\frac{i}{2\pi}\biggl(I_+(H_uf)\bbP(\abs{u}^2)-I_+(T_{\overline u}H_uf)u\biggr).
\end{align*}
Now let us compute all entries in the r.h.s.; we have
$$
I_+(H_uf)
=\lim_{\eps\to0_+}\jap{H_uf,\chi_\eps}
=\lim_{\eps\to0_+}\jap{u\overline{f},\chi_\eps}
=\lim_{\eps\to0_+}\jap{u,f\chi_\eps}
=\jap{u,f}, 
$$
and similarly 
\begin{align*}
I_+(T_{\overline u}H_uf)
&=\lim_{\eps\to0_+}\jap{T_{\overline u}H_u f,\chi_\eps}
=\lim_{\eps\to0_+}\jap{{\overline u}H_u f,\chi_\eps}
\\
&=\lim_{\eps\to0_+}\jap{H_u f,u\chi_\eps}
=\jap{H_u f,u}
=\jap{H_uu,f}\, .
\end{align*}
Finally, $\bbP(\abs{u}^2)=H_uu$. Putting this together yields the required identity.
\end{proof}

\subsection{Commutation relations between $A+L_u$ and $\calH_u$}

\begin{lemma}
Let $u\in\calR$; we have the identity 
\begin{equation}
(A^*+L_u)\calH_u=\calH_u(A+L_u) 
\label{g1}
\end{equation}
on $\Dom A$.
\end{lemma}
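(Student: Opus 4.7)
The plan is to reduce (g1) to Lemma~\ref{lma.g2} by transporting it through the Lax-pair conjugation that was obtained in the proof of Theorem~\ref{Szegoline}. Recall that the unitary propagator $U(t)$ satisfies $U(t)^*H_{u_t}U(t)=H_u$ together with
\begin{equation*}
U(t)^*A^*U(t)=\mathrm{e}^{i\frac{t}{2}H_u^2}(A^*+L_u(t))\mathrm{e}^{-i\frac{t}{2}H_u^2};
\end{equation*}
taking Hilbert-space adjoints of this identity yields the companion formula
\begin{equation*}
U(t)^*A\,U(t)=\mathrm{e}^{i\frac{t}{2}H_u^2}(A+L_u(t))\mathrm{e}^{-i\frac{t}{2}H_u^2}.
\end{equation*}
For $u\in\calR$ the Szeg\H{o} flow preserves rationality (by Kronecker's theorem and the conservation of the rank of $H_{u_t}$ forced by the Lax equation \eqref{b3}), so $u_t\in\calR$ and Lemma~\ref{lma.g2} applies equally to $H_{u_t}$, giving $A^*H_{u_t}=H_{u_t}A$ on $\Dom A$.

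With these ingredients, the proof is a direct manipulation on $\Dom A$. Starting from $(A^*+L_u)\calH_u=(A^*+L_u)\mathrm{e}^{-itH_u^2}H_u$, I would substitute the first conjugation formula for $A^*+L_u$, combine the two adjacent scalar exponentials, and slide the remaining $H_u$ past $\mathrm{e}^{\pm i\frac{t}{2}H_u^2}$ using the anti-linear commutation $H_u\mathrm{e}^{isH_u^2}=\mathrm{e}^{-isH_u^2}H_u$ (a direct consequence of the anti-linearity of $H_u$ together with $H_uH_u^2=H_u^2H_u$). Next, use the intertwining $U(t)H_u=H_{u_t}U(t)$ to convert the $U(t)\,H_u$ appearing in the middle into $H_{u_t}U(t)$, so that the central block becomes $U(t)^*A^*H_{u_t}U(t)$. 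At this point invoke Lemma~\ref{lma.g2} applied to $u_t$ to replace $A^*H_{u_t}$ by $H_{u_t}A$. Reverse all the preceding moves in the opposite order --- intertwining $H_{u_t}U(t)$ back to $U(t)H_u$, and sliding the scalar exponentials past $H_u$ a second time --- and then invoke the companion conjugation formula, now for $A+L_u$. The expression collapses to $\mathrm{e}^{-itH_u^2}H_u(A+L_u)=\calH_u(A+L_u)$, which is (g1).

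The main obstacle is really sign bookkeeping rather than any substantive analytical difficulty. Because $H_u$ and $H_{u_t}$ are anti-linear, each time a scalar exponential $\mathrm{e}^{\pm isH_u^2}$ is commuted past them the exponent flips sign, and it is precisely these sign flips that swap the $A^*$-conjugation into the $A$-conjugation needed to close the identity --- if one forgets the anti-linearity, the two sides no longer match. Domain issues cause no trouble in the rational setting: $H_{u_t}$ has finite rank with range inside $\calR\subset\Dom A^*$, $L_u(t)$ is bounded, and the bounded unitary $U(t)$ together with the bounded exponentials transports $\Dom A$ consistently, so all intermediate expressions are well defined and the equality (g1) is literal, not merely formal.
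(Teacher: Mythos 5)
Your argument is algebraically correct --- I checked that the chain of substitutions closes, with the anti-linear sign flips $H_u e^{isH_u^2}=e^{-isH_u^2}H_u$ doing exactly the work you describe --- but it takes a genuinely different route from the paper's. The paper never invokes the propagator $U(t)$ here: it differentiates $e^{itH_u^2}A^*e^{-itH_u^2}H_u$ in $t$, inserts the commutator identity $[A^*,H_u^2]H_u=\tfrac{i}{2\pi}\bigl(\jap{u,\cdot}H_uu-\jap{H_uu,\cdot}u\bigr)$ from the preceding lemma, integrates, and matches the resulting Duhamel integral against a direct computation of $\calH_uL_u-L_u\calH_u$ from the definition \eqref{d1} of $L_u$; only at the very end does it restrict to $\Dom A$ and apply Lemma~\ref{lma.g2}. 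The virtue of that arrangement is that every intermediate operator is composed with $H_u$ on the right, hence acts into the finite-dimensional subspace $\Ran H_u\subset\calR\subset\Dom A^*$, so all expressions are manifestly bounded and no domain questions arise. Your route instead imports the conjugation identity $U(t)^*A^*U(t)=e^{i\frac t2H_u^2}(A^*+L_u(t))e^{-i\frac t2H_u^2}$ and its adjoint from the proof of Theorem~\ref{Szegoline}, together with persistence of rationality along the flow. What this buys is brevity and a conceptual explanation of where $L_u$ comes from; what it costs is two extra inputs that deserve more care than you give them. First, to apply Lemma~\ref{lma.g2} to $H_{u_t}$ in the middle of your chain you need $U(t)e^{i\frac t2H_u^2}$ to map $\Dom A$ into $\Dom A$; this does \emph{not} follow from ``bounded unitaries transport the domain consistently'' (unitaries need not preserve the domain of an unbounded operator), but rather from reading the adjoint conjugation formula as an identity of unbounded operators including domains --- an identity the paper itself only derives by formally integrating an operator ODE. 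Second, $u_t\in\calR$ requires global existence in $W^{s,2}$ plus the converse direction of Kronecker's theorem applied to $H_{u_t}=U(t)H_uU(t)^*$; this is standard but is machinery the paper's self-contained computation deliberately avoids. With those two points made explicit, your proof is a valid alternative.
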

\begin{proof}
\emph{Step 1:}
Let us prove the identity
\begin{equation}
(e^{itH_u^2}A^*e^{-itH_u^2}-A^*)H_u
=
\frac1{2\pi}\int_0^t e^{isH_u^2}\biggl(\jap{u,\cdot}H_uu-\jap{H_uu,\cdot}u\biggr)e^{isH_u^2}ds\, .
\label{g2}
\end{equation}
Differentiating the l.h.s. of \eqref{g2} with respect to $t$, we find
$$
\frac{d}{dt}e^{itH_u^2}A^*e^{-itH_u^2}H_u
=
-ie^{itH_u^2}[A^*,H_u^2]H_ue^{itH_u^2}. 
$$
Using the previous lemma and integrating, we arrive at \eqref{g2}.
Multiplying \eqref{g2} by $e^{-itH_u^2}$ on the left, we obtain
$$
A^*e^{-itH_u^2}H_u-e^{-itH_u^2}A^*H_u
=
\frac1{2\pi}\int_0^t e^{-i(t-s)H_u^2}\biggl(\jap{u,\cdot}H_uu-\jap{H_uu,\cdot}u\biggr)e^{isH_u^2}ds\, .
$$
\emph{Step 2:}
By the definition of $L_u$, we find 
\begin{align*}
\calH_u L_u-L_u\calH_u
=&
e^{-itH_u^2}H_uL_u
-
L_ue^{-itH_u^2}H_u
\\
=&
\frac1{2\pi}
\int_0^t e^{-i(t-s)H_u^2}(\jap{u,\cdot}H_uu)e^{isH_u^2}ds
\\
&-
\frac1{2\pi}
\int_0^1 e^{-isH_u^2}(\jap{H_uu,\cdot}u)e^{i(t-s)H_u^2}ds\, ;
\end{align*}
changing the variable in the second integral, we find
$$
\calH_u L_u-L_u\calH_u
=
\frac1{2\pi}
\int_0^t e^{-i(t-s)H_u^2}\biggl(\jap{u,\cdot}H_uu-\jap{H_uu,\cdot}u\biggr)e^{isH_u^2}ds.
$$
Comparing this with the final identity of the previous step, we arrive at 
$$
A^*e^{-itH_u^2}H_u-e^{-itH_u^2}A^*H_u
=
\calH_u L_u-L_u\calH_u\, .
$$
Finally, on $\Dom A$ we can use Lemma~\ref{lma.g2}, and the left hand side becomes 
$$
A^*\calH_u-\calH_uA.
$$
This yields the required identity \eqref{g1}.
\end{proof}

\subsection{Approximation arguments}

%%%%%%%%%%%%%%%%
\begin{lemma}
%%%%%%%%%%%%%%%%
For any $u\in H^2(\bbR)$ and $f\in\calR$, we have the identity
\begin{equation}
(A^*+L_u-i)^{-1}\calH_uf=\calH_u(A+L_u+i)^{-1}f\, .
\label{g3}
\end{equation}
\end{lemma}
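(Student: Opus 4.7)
The plan is to prove \eqref{g3} first in the special case where both $u$ and $f$ are rational, by a direct algebraic derivation from the commutation relation \eqref{g1} of the preceding lemma, and then to extend to arbitrary $u\in H^2(\bbR)$ (with $f\in\calR$) by approximating $u$ with a sequence $u_n\in\calR$ satisfying $\norm{u_n-u}_{H^2}\to 0$.

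For the rational case, I fix $u\in\calR$ and $f\in\calR$, and set $g:=(A+L_u+i)^{-1}f\in\Dom A$. Using the anti-linearity of $\calH_u$,
\[
\calH_u f = \calH_u\bigl((A+L_u)g+ig\bigr) = \calH_u(A+L_u)g - i\,\calH_u g,
\]
and \eqref{g1} identifies the first term with $(A^*+L_u)\calH_u g$, which in particular forces $\calH_u g\in\Dom A^*$. Hence $\calH_u f=(A^*+L_u-i)\calH_u g$, and applying $(A^*+L_u-i)^{-1}$ to both sides yields \eqref{g3}.

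For the general case, Lemma~\ref{lma.d6} provides $\norm{L_{u_n}-L_u}\to 0$ in operator norm together with the strong convergence $e^{-itH_{u_n}^2}\to e^{-itH_u^2}$. Combined with the uniform bounds on $\norm{(A+L_{u_n}+i)^{-1}}$ and $\norm{(A^*+L_{u_n}-i)^{-1}}$ coming from maximal dissipativity, the resolvent identity forces operator-norm convergence of both resolvent sequences. For any $h\in\calR$, the boundedness of $h$ on $\bbR$ yields $H_{u_n}h=\bbP(u_n\overline h)\to H_u h$ in $H^2(\bbR)$, whence $\calH_{u_n}h\to\calH_u h$. Writing $g:=(A+L_u+i)^{-1}f$, $g_n:=(A+L_{u_n}+i)^{-1}f$, and $k:=(A^*+L_u-i)^{-1}\calH_u f$, I then have $g_n\to g$ and $(A^*+L_{u_n}-i)^{-1}\calH_{u_n}f\to k$ in $H^2(\bbR)$. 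Applying the rational case of \eqref{g3} to $u_n$ together with the symmetry relation of Lemma~\ref{lma.e6}(iii) gives, for every $h\in\calR$,
\[
\jap{(A^*+L_{u_n}-i)^{-1}\calH_{u_n}f,h}
=
\jap{\calH_{u_n}g_n,h}
=
\jap{\calH_{u_n}h,g_n}.
\]
Passing to the limit yields $\jap{k,h}=\jap{\calH_u h,g}$ for all $h\in\calR$, and by density of $\calR$ in $\Dom\calH_u$ (Lemma~\ref{lma.e6}(ii)) this extends to all $h\in\Dom\calH_u$. Lemma~\ref{lma.e6}(iv) then concludes that $g\in\Dom\calH_u$ and $\calH_u g=k$, which is precisely \eqref{g3}.

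The principal obstacle is that, for $u\in H^2(\bbR)$ merely square-integrable rather than bounded, the element $g=(A+L_u+i)^{-1}f\in\Dom A$ need not a priori belong to $\Dom\calH_u$; a direct continuity argument for $\calH_u$ is therefore unavailable, and one must instead exploit the dual characterisation of the domain provided by Lemma~\ref{lma.e6}(iv). The anti-linearity of $\calH_u$, responsible for the asymmetric occurrence of $\pm i$ in \eqref{g3}, must also be carefully respected throughout.
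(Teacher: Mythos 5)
Your proof is correct and follows essentially the same route as the paper's: establish the resolvent identity for $u\in\calR$ from the commutator relation \eqref{g1}, then pass to general $u\in H^2(\bbR)$ by approximating with rational $u_n$, testing against $g\in\calR$, using the symmetry of $\calH_{u_n}$, and invoking the closedness, graph-norm density of $\calR$, and the dual domain characterisation of Lemma~\ref{lma.e6}(iv). The only difference is cosmetic: you spell out the ``take inverses'' step (including the sign flip forced by anti-linearity of $\calH_u$) that the paper leaves implicit.
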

\begin{proof}
\emph{Step 1:}
Take a sequence $u_n\in\calR$ such that $\norm{u_n-u}_{H^2}\to0$ as $n\to\infty$. 
Since $f\in\calR\subset H^2\cap H^\infty$, we have $\norm{(u_n-u)\overline{f}}_{L^2}\to0$ and therefore $H_{u_n}f\to H_uf$ in $H^2$. 
Furthermore, by Theorem~\ref{thm.c2}, we have the strong resolvent convergence $H_{u_n}^2\to H_{u}^2$. It follows that $\calH_{u_n}f\to\calH_u f$. Furthermore, by Lemma~\ref{lma.d6} we have the operator norm convergence
$$
\norm{L_{u_n}-L_u}\to0, \quad n\to\infty.
$$
Using the resolvent identity, we find that 
$$
\norm{(A+L_{u_n}+i)^{-1}-(A+L_u+i)^{-1}}\to0, \quad n\to\infty.
$$
\emph{Step 2:}
We write the commutation relation \eqref{g1} with $u_n$ in place of $u$ and take inverses; this yields
\begin{equation}
(A^*+L_{u_n}-i)^{-1}\calH_{u_n}f=\calH_{u_n}(A+L_{u_n}+i)^{-1}f
\label{e9}
\end{equation}
for each $n$. Our aim is to pass to the limit here. 
 Consider both sides of this identity. By Step 1 of the proof,  we have
\begin{align*}
\norm{(A+L_{u_n}+i)^{-1}f-(A+L_{u}+i)^{-1}f}&\to0, 
\\
\norm{(A^*+L_{u_n}-i)^{-1}\calH_{u_n}f-(A^*+L_{u}-i)^{-1}\calH_{u}f}&\to0, 
\end{align*}
as $n\to\infty$.
Take $g\in\calR$ and consider the inner product of \eqref{e9} with $g$:
$$
\jap{(A^*+L_{u_n}-i)^{-1}\calH_{u_n}f,g}=\jap{\calH_{u_n}(A+L_{u_n}+i)^{-1}f,g}.
$$
By the symmetry of $\calH_{u_n}$ (Lemma~\ref{lma.e6}(iii)), the r.h.s. here rewrites as 
$$
\jap{\calH_{u_n}(A+L_{u_n}+i)^{-1}f,g}
=
\jap{\calH_{u_n}g,(A+L_{u_n}+i)^{-1}f}.
$$
Putting this together and taking the limit as $n\to\infty$, we find
\begin{align*}
\jap{(A^*+L_u-i)^{-1}\calH_uf,g}
&=
\lim_{n\to\infty}
\jap{(A^*+L_{u_n}-i)^{-1}\calH_{u_n}f,g}
\\
&=
\lim_{n\to\infty}
\jap{\calH_{u_n}g,(A+L_{u_n}+i)^{-1}f}
\\
&=
\jap{\calH_{u}g,(A+L_{u}+i)^{-1}f}, 
\end{align*}
resulting in 
\begin{equation}
\jap{(A^*+L_u-i)^{-1}\calH_uf,g}
=
\jap{\calH_{u}g,(A+L_{u}+i)^{-1}f}.
\label{e10}
\end{equation}
By Lemma~\ref{lma.e6}(i) and (ii), the operator $\calH_u$ is closed and rational functions are dense in $\Dom\calH_u$ with respect to the graph norm. It follows that \eqref{e10} can be extended from $g\in\calR$ to all $g\in\Dom\calH_u$.  
From here, by Lemma~\ref{lma.e6}(iv) we find that $(A+L_{u}+i)^{-1}f\in\Dom\calH_u$ and \eqref{g3} holds. 
\end{proof}

\subsection{Proof of Theorem~\ref{thm.e1}}
Let $f\in\Dom\calH_u$; we need to prove that $\Sigma_u f\in\Dom \calH_u$ and $\calH_u \Sigma_u f=\Sigma_u^*\calH_u f$. Essentially, we will prove this by taking adjoints. 

Let $g\in\calR$; since 
$$
\Sigma_u=I-2i(A+L_u+i)^{-1}, 
$$
from \eqref{g3} we obtain
$$
\Sigma_u^*\calH_u g=\calH_u \Sigma_ug.
$$
Now using the symmetry of $\calH_u$, we find
\begin{align*}
\jap{\calH_ug,\Sigma_uf}
&=
\jap{\Sigma_u^*\calH_u g,f}
=
\jap{\calH_u\Sigma_ug,f}
\\
&=
\jap{\calH_u f,\Sigma_u g}
=
\jap{\Sigma_u^*\calH_uf, g}.
\end{align*}
Since $\calH_u$ is closed (see Lemma~\ref{lma.e6}(i)), the last relation can be extended to all $g\in\Dom\calH_u$. 
By Lemma~\ref{lma.e6}(iv), we find that $\Sigma_u f\in\Dom \calH_u$ and $\calH_u \Sigma_u f=\Sigma_u^*\calH_u f$. 
The proof of Theorem~\ref{thm.e1} is complete. \qed

%%%%%%%%%%%%%%%%%%%%%%%%%%%%%%%
%%%%%%%%%%%%%%%%%%%%%%%%%%%%%%%
\section{Completing the proof of Theorem~\ref{thm.d1}}\label{sec.g}
%%%%%%%%%%%%%%%%%%%%%%%%%%%%%%%
%%%%%%%%%%%%%%%%%%%%%%%%%%%%%%%

\subsection{The spectral properties of $\Sigma_u$}

We use the following Lemma from \cite{GP}:
\begin{lemma}\cite[Lemma~4.3]{GP}
\label{lma.dd1}
Let $\Sigma_0$ be a contraction on a Hilbert space $X$ such that $\Sigma_0$ is unitarily equivalent to the shift operator $S$ on $H^2(\bbT)$. Let $\Sigma$ be another contraction on $X$ with defect indices $(0,1)$. Assume that $\Sigma-\Sigma_0$ is trace class; then the unitary part of $\Sigma$ in the Wold decomposition \eqref{e8} has no absolutely continuous component. 
\end{lemma}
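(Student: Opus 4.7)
The plan is to reduce the claim to the Kato--Rosenblum theorem for unitary operators by simultaneously dilating $\Sigma_0$ and $\Sigma$ to unitaries on a common enlarged Hilbert space, and then reading off the conclusion from a multiplicity count.

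Since $\Sigma$ has defect indices $(0,1)$, it is an isometry ($\Sigma^*\Sigma=I$) with $\dim\Ker\Sigma^*=1$; write $\Ker\Sigma^*=\bbC q$ and $\Ker\Sigma_0^*=\bbC q_0$ for unit vectors. The Wold decomposition gives $X=X_{\mathrm{sh}}\oplus X_{\mathrm{unit}}$ and $\Sigma=\widetilde S\oplus U$, with $\widetilde S$ a unilateral shift of multiplicity one on $X_{\mathrm{sh}}$ and $U$ unitary on $X_{\mathrm{unit}}=\bigcap_n\Sigma^nX$; the goal is to prove that the absolutely continuous part $U_{\mathrm{ac}}$ of $U$ is zero.

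Next, I construct the Sz.-Nagy minimal unitary dilations of both isometries on the common space $\widetilde X:=\ell^2(\bbZ_{<0})\oplus X$. Writing $\{e_{-k}\}_{k\geq 1}$ for the standard basis of $\ell^2(\bbZ_{<0})$, define operators $V_0,V$ on $\widetilde X$ by: $V_0|_X=\Sigma_0$ and $V|_X=\Sigma$; $V_0 e_{-k}=V e_{-k}=e_{-(k-1)}$ for $k\geq 2$; $V_0 e_{-1}=q_0$ and $V e_{-1}=q$. A direct check shows $V_0$ and $V$ are unitary on $\widetilde X$, and are the minimal unitary extensions of $\Sigma_0$ and $\Sigma$ respectively. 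Crucially, the difference $V-V_0$ vanishes on $\bigoplus_{k\geq 2}\bbC e_{-k}$, is the rank-one map $e_{-1}\mapsto q-q_0$ on $\bbC e_{-1}$, and equals the trace class operator $\Sigma-\Sigma_0$ on $X$; hence $V-V_0$ is trace class.

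Since $\Sigma_0$ is unitarily equivalent to $S$, the dilation $V_0$ is unitarily equivalent to the bilateral shift on $\ell^2(\bbZ)$, whose spectrum is purely absolutely continuous of multiplicity one on $\bbT$. On the other hand $V$ preserves the orthogonal splitting $\widetilde X=(\ell^2(\bbZ_{<0})\oplus X_{\mathrm{sh}})\oplus X_{\mathrm{unit}}$: on the first summand it is the minimal unitary dilation of the unilateral shift $\widetilde S$, again a bilateral shift of multiplicity one (purely a.c., multiplicity one on $\bbT$), and on the second summand it coincides with $U$. Therefore the a.c.\ multiplicity function of $V$ equals $1+m_{U_{\mathrm{ac}}}$ almost everywhere on $\bbT$. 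The Kato--Rosenblum theorem for unitaries, applied to $V$ and $V_0$, now forces $V_{\mathrm{ac}}$ and $V_0$ to be unitarily equivalent, so their multiplicity functions coincide; this yields $1+m_{U_{\mathrm{ac}}}=1$ a.e., i.e.\ $U_{\mathrm{ac}}=0$. The only delicate point is arranging both Sz.-Nagy dilations on a single Hilbert space so that the trace class property of $\Sigma-\Sigma_0$ transfers to $V-V_0$; this works precisely because both isometries have defect indices of the same shape $(0,1)$, after which everything reduces to standard multiplicity bookkeeping from unitary scattering theory.
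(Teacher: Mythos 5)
This lemma is not proved in the paper you were given: it is imported verbatim from \cite[Lemma~4.3]{GP}, so there is no in-paper argument to measure yours against. Your proof is correct and complete modulo one standard external input. The construction of the two unitary extensions $V_0,V$ on $\ell^2(\bbZ_{<0})\oplus X$ works exactly as you say: each is a surjective isometry because the image of $e_{-1}$ fills the one-dimensional defect space $\Ker\Sigma^{*}$ (resp.\ $\Ker\Sigma_0^{*}$); the difference $V-V_0$ is $\Sigma-\Sigma_0$ on $X$ plus the rank-one map $e_{-1}\mapsto q-q_0$, hence trace class; the subspace $\ell^2(\bbZ_{<0})\oplus X_{\mathrm{sh}}$ is mapped onto itself by the unitary $V$ and is therefore reducing, and the restriction of $V$ there is a bilateral shift of multiplicity one with respect to the orthonormal bilateral orbit $\{e_{-k}\}_{k\geq1}\cup\{\Sigma^n q\}_{n\geq0}$, while $V_0$ itself is a bilateral shift of multiplicity one. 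The external input is the trace-class scattering theorem for unitary operators (Birman--Krein/Kato: $V-V_0$ trace class implies the absolutely continuous parts of $V$ and $V_0$ are unitarily equivalent); granting that, the multiplicity identity $1+m_{U_{\mathrm{ac}}}=1$ a.e.\ on $\bbT$ forces $U_{\mathrm{ac}}=0$. This dilation-plus-scattering mechanism is the natural route to such a statement and, as far as I can tell, is in the same spirit as the argument in \cite{GP}; the one point worth stating explicitly in a written version is the reference for the unitary (rather than self-adjoint) form of the Kato--Rosenblum theorem, since that is where all the analytic content of the lemma is concentrated.
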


Let us check that $\Sigma=\Sigma_u$ satisfies the hypothesis of this Lemma for any $u\in H^2(\bbR)$. We first note that $L_u$ is trace class and therefore, by the definitions \eqref{e8aa} of $\Sigma_0$ and $\Sigma_u$ and the resolvent identity, the operator
$$
\Sigma_u-\Sigma_0
$$
is also trace class. Secondly, $\Sigma_0$ is unitarily equivalent to the shift operator $S$ by Lemma~\ref{lma.d3}. We conclude that Lemma~\ref{lma.dd1} applies and so the unitary part of $\Sigma_u$ has no a.c. component.

\subsection{The Wold decomposition of $\Sigma_u$ reduces $\calH_u$}
We use the following Lemma from \cite{GP}:
\begin{lemma}\cite[Lemma~4.4]{GP}
\label{lma.dd2}
Let $\Sigma$ be a contraction on a Hilbert space $X$ satisfying the hypotheses of Lemma~\ref{lma.dd1}. Let $\calH$ be an anti-linear operator on $X$ with a dense domain $\Dom \calH$ with the following properties: 
\begin{enumerate}[\rm (a)]
\item
symmetry
$$
\jap{\calH f,g}=\jap{\calH g,f}, \quad \forall f,g\in\Dom \calH;
$$
\item
if for some $f,h\in X$ we have 
$$
\jap{\calH g,f}=\jap{h,g}, \quad \forall g\in\Dom\calH,
$$
then $f\in\Dom\calH$ and $\calH f=h$;
\item
$\Sigma\Dom\calH\subset\Dom\calH$ and 
$$
\Sigma^*\calH=\calH\Sigma
$$
on the domain of $\calH$. 
\end{enumerate}
Then the Wold decomposition \eqref{e8} of $\Sigma$ reduces $\calH$, i.e. if 
$P^{\rm (cnu)}$  is the orthogonal projection in $X$ onto the subspace $X^{\rm (cnu)}$ in \eqref{e8}, then $P^{\rm (cnu)}\Dom\calH\subset\Dom\calH$ and $\calH P^{\rm (cnu)}(\Dom\calH)\subset X^{\rm (cnu)}$. 
\end{lemma}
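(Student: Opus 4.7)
The objective is to show that $P^{(u)} := I - P^{(cnu)}$, the projection onto the unitary part of $\Sigma$ in the Wold decomposition, commutes with $\calH$ in the appropriate sense: $P^{(u)}\Dom\calH\subset\Dom\calH$ and $\calH P^{(u)}f = P^{(u)}\calH f$ for $f\in\Dom\calH$. By taking complements this is exactly the statement of the lemma.

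First I would exploit the algebraic structure. Iterating (c) yields $\calH\Sigma^n = (\Sigma^*)^n\calH$ on $\Dom\calH$. Since $\Sigma$ has defect indices $(0,1)$ with unit vector $q$ spanning $\Ker\Sigma^*$, we have $\Sigma^*\Sigma = I$ and $\Sigma\Sigma^* = I-\jap{\cdot,q}q$. Combining these with the commutation gives $\norm{\calH\Sigma f}^2 = \norm{\Sigma^*\calH f}^2 = \norm{\calH f}^2 - \abs{\jap{\calH f, q}}^2$, which polarises (using the symmetry (a)) to the operator identity
\begin{equation*}
\calH^2 - \Sigma^*\calH^2\Sigma = \jap{\cdot,\calH q}\calH q
\end{equation*}
on $\Dom\calH$. (A technical point is that $q$ may not belong to $\Dom\calH$; in that case the right-hand side should be interpreted through approximation by elements of $\Dom\calH$, using the closedness (b).) Iterating $n$ times telescopes to
\begin{equation*}
\calH^2 = (\Sigma^*)^n\calH^2\Sigma^n + \sum_{j=0}^{n-1}\jap{\cdot,\calH\Sigma^j q}\calH\Sigma^j q,
\end{equation*}
in which every summand on the right is a positive rank-one operator, so the sum is monotone increasing as $n\to\infty$.

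Next I would analyse the limit $n\to\infty$. Observe that $\{\Sigma^j q\}_{j\geq 0}$ is an orthonormal basis of $X^{(cnu)}$ and $(\Sigma^*)^n\to 0$ strongly on $X^{(cnu)}$; on $X^{(u)}$ the restriction $U=\Sigma|_{X^{(u)}}$ is unitary, and by Lemma~\ref{lma.dd1} it has purely singular spectrum. This last fact is decisive: a mean-ergodic / Wiener-type argument applied to the spectral measure of $U$ against $\calH q$ (and more generally against $\calH\Sigma^j q$) kills the cross-terms between $P^{(u)}f$ and $P^{(cnu)}f$ in $\jap{(\Sigma^*)^n\calH^2\Sigma^n f, f}$, so the telescoping identity splits across the Wold decomposition in the limit. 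The outcome is $[\calH^2, P^{(u)}]=0$ as quadratic forms on $\Dom\calH$, hence as operators on $\Dom\calH^2$.

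Commutation of the positive self-adjoint $\calH^2$ with $P^{(u)}$ propagates to its square root $\abs{\calH}$, so $P^{(u)}$ preserves $\Dom\abs{\calH}=\Dom\calH$. To promote this to the antilinear $\calH$ itself, I would verify $\jap{\calH g, P^{(u)}f}=\jap{P^{(u)}\calH f, g}$ for every $g\in\Dom\calH$ (using self-adjointness of $P^{(u)}$, the symmetry (a), and the $\calH^2$-commutation just established), and then invoke the closedness criterion (b) to conclude $P^{(u)}f\in\Dom\calH$ and $\calH P^{(u)}f = P^{(u)}\calH f\in X^{(u)}$. The genuine obstacle is the limit step: the antilinearity and potential unboundedness of $\calH$ force us to work at the level of quadratic forms, and the precise mechanism by which the absence of absolutely continuous spectrum on $X^{(u)}$---ultimately supplied by the trace-class hypothesis via Lemma~\ref{lma.dd1}---eliminates the cross-terms between the unitary and shift parts is the technical heart of the argument; without it a nontrivial intertwiner between $U$ and $U^{-1}$ on an absolutely continuous subspace could obstruct reduction.
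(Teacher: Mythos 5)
This lemma is not proved in the paper at all: it is quoted verbatim from \cite[Lemma~4.4]{GP}, so there is no internal argument to compare yours with, and I can only judge your proposal on its own terms. It has a genuine gap at exactly the point you call the technical heart. Your telescoped identity
\[
\calH^2=(\Sigma^*)^n\calH^2\Sigma^n+\sum_{j=0}^{n-1}\jap{\cdot,\calH\Sigma^jq}\,\calH\Sigma^jq
\]
is correct as a form identity (granting $q\in\Dom\calH$, which itself requires an argument), but it carries no information in the limit: both $X^{\rm (u)}$ and $X^{\rm (cnu)}$ are $\Sigma^*$-invariant, so the polarised first term $\jap{(\Sigma^*)^n\calH g,(\Sigma^*)^n\calH f}$ converges to $\jap{P^{\rm (u)}\calH g,P^{\rm (u)}\calH f}$, the sum converges to $\jap{P^{\rm (cnu)}\calH g,P^{\rm (cnu)}\calH f}$, and the identity degenerates to the Pythagorean decomposition of $\jap{\calH g,\calH f}$. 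There are no ``cross-terms between $P^{\rm (u)}f$ and $P^{\rm (cnu)}f$'' inside $\jap{(\Sigma^*)^n\calH^2\Sigma^n f,f}=\norm{(\Sigma^*)^n\calH f}^2$ to be killed, so no ergodic or Wiener-type argument applied to this quantity can produce $[\calH^2,P^{\rm (u)}]=0$. Moreover, even if $[\calH^2,P^{\rm (u)}]=0$ were established, it would not yield the lemma: for an antilinear symmetric $\calH$, commutation of $\calH^2$ with a projection does not control the off-diagonal blocks of $\calH$ (on $\bbC^2$ the map $\calH(z_1,z_2)=(\overline{z_2},\overline{z_1})$ has $\calH^2=I$, which commutes with every projection, yet no coordinate line reduces $\calH$). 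Your final verification $\jap{\calH g,P^{\rm (u)}f}=\jap{P^{\rm (u)}\calH f,g}$ is, after applying (a), precisely the assertion that the block $P^{\rm (cnu)}\calH P^{\rm (u)}$ vanishes --- which is the whole content of the lemma, not a consequence of the $\calH^2$-commutation.

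The singular-spectrum hypothesis has to be brought to bear on that off-diagonal block directly. Let $T$ be the antilinear map $f\mapsto P^{\rm (cnu)}\calH f$ on $X^{\rm (u)}\cap\Dom\calH$ and $T^\sharp$ its transpose ($\jap{T^\sharp h,f}=\jap{Tf,h}$), which by (a) is $P^{\rm (u)}\calH P^{\rm (cnu)}$. The intertwining gives $TU=S^*T$ and $T^\sharp S=U^{-1}T^\sharp$, where $U=\Sigma|_{X^{\rm (u)}}$ and $S=\Sigma|_{X^{\rm (cnu)}}$ is a pure shift; hence $\norm{TU^nf}=\norm{(S^*)^nTf}\to0$. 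Using $SS^*=I-\jap{\cdot,q}q$ one computes that the positive operator $B=T^\sharp T$ on $X^{\rm (u)}$ satisfies $U^{-1}BU=B-\jap{\cdot,w}w$ with $w=T^\sharp q=P^{\rm (u)}\calH q$, and therefore $B=\sum_{j\geq0}\jap{\cdot,U^{-j}w}U^{-j}w$ in the strong limit. Testing against $w$ gives $\sum_{j\geq0}\abs{\jap{U^{j}w,w}}^2<\infty$, hence $\sum_{j\in\bbZ}\abs{\jap{U^jw,w}}^2<\infty$, so the spectral measure of $U$ at $w$ has an $L^2$ density and is absolutely continuous; since Lemma~\ref{lma.dd1} forbids any a.c.\ component, $w=0$, whence $B=0$ and $T=0$. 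This rank-one commutator computation on the unitary part --- together with the domain bookkeeping via (b), including the justification that $q\in\Dom\calH$ --- is the missing mechanism; the $\calH^2$ telescoping you propose cannot substitute for it.
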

Let us check that $\calH=\calH_u$ and $\Sigma=\Sigma_u$ satisfy the hypotheses of this lemma. 
Conditions (a) and (b) hold by Lemma~\ref{lma.e6}(iii),(iv).
Condition (c) is exactly the commutation relation of Theorem~\ref{thm.e1}.

\subsection{Proof of Theorem~\ref{thm.d1}}
As discussed in Section~\ref{sec.e4}, it suffices to prove that  the vector $p=e^{-itH_u^2}u$ belongs to the c.n.u. subspace in the Wold decomposition \eqref{e8} of the contraction $\Sigma_u$. Let us prove this.

\emph{Step 1:} let us check the identity
$$
\lim_{\eps\to0_+} (I-\Sigma_u)\chi_\eps=\sqrt{4\pi} q.
$$
Using the resolvent identity and the definition of the reproducing kernel, we find
\begin{align*}
\lim_{\eps\to0_+} (I-\Sigma_u)\chi_\eps
&=
2i\lim_{\eps\to0_+}(A+L_u+i)^{-1}\chi_\eps
\\
&=
2i\lim_{\eps\to0_+}(I-(A+L_u+i)^{-1}L_u)(A+i)^{-1}\chi_\eps
\\
&=4\pi (I-(A+L_u+i)^{-1}L_u)v_i=\sqrt{4\pi}q.
\end{align*}

\emph{Step 2:} let us check the inclusion
$$
(I-\Sigma_u^*)p\in X^{\rm (cnu)}.
$$
First, we have $u=\lim_{\eps\to0_+}H_u\chi_\eps$ and therefore 
$$
p=e^{-itH_u^2}u=\lim_{\eps\to0_+}e^{-itH_u^2}H_u\chi_\eps=\lim_{\eps\to0_+} \calH_u\chi_\eps.
$$
Next, by the commutation relation \eqref{e1b} we have the identity
$$
(I-\Sigma_u^*)\calH_u\chi_\eps=\calH_u(I-\Sigma_u)\chi_\eps
$$
for each $\eps>0$. The l.h.s. here converges to $(I-\Sigma_u^*)p$ as $\eps\to0_+$. Consider the r.h.s. here. By the previous step, we have
$$
(I-\Sigma_u)\chi_\eps\to\sqrt{4\pi}q\, . 
$$
Since $\calH_u$ is closed (see Lemma~\ref{lma.e6}(i)), it follows that $q\in\Dom \calH_u$ and 
$$
\sqrt{4\pi}\calH_u q=\lim_{\eps\to0_+}(I-\Sigma_u^*)\calH_u\chi_\eps
=(I-\Sigma_u^*)p.
$$
Since $q$ is in the defect subspace of $\Sigma_u$ (see Lemma~\ref{lma.d4}), we have $q\in X^{\rm (cnu)}$. By using Lemma~\ref{lma.dd2} (this is a crucial point in the proof!) we conclude that $\calH_u q\in X^{\rm (cnu)}$ and therefore $(I-\Sigma_u^*)p\in X^{\rm (cnu)}$. 

\emph{Step 3:} Since $\Sigma_u^*$ is a Cayley transform of a maximal dissipative operator, we have $\Ker (I-\Sigma_u^*)=\{0\}$. From here and the previous step of the proof, it follows that $p\in X^{\rm (cnu)}$. 
The proof of Theorem~\ref{thm.d1} is complete. 
\qed

\appendix

\section{Operator theoretic background}

We recall key operator theoretic statements that we use throughout the text. For details, see \cite{HillePh} and \cite{Davies} for semigroup theory and  \cite{NF} for the theory of contractions and dissipative operators. 

\subsection{Dissipative operators} 
Let $X$ be a Hilbert space with the inner product $\jap{\cdot,\cdot}$. An operator $\calA$ with a domain $\Dom \calA\subset X$ is called \emph{dissipative}, if $\Im \jap{\calA f,f}\geq0$ for all $f\in\Dom\calA$. A dissipative operator $\calA$ is called \emph{maximal}, if it has no dissipative extensions. 
Every dissipative operator has a maximal dissipative extension. A maximal dissipative operator is closed. 
(The reader may wish to keep in mind that the distinction between dissipative and maximal dissipative operators is similar to the distinction between symmetric and self-adjoint operators.)

Let $\calA$ be a dissipative operator. 
A simple calculation shows that 
$$
\norm{(\calA+z)f}\geq\Im z\norm{f} 
$$ 
for any $\Im z>0$ and $f\in\Dom \calA$, and in particular the kernel of $\calA+z$ is trivial. Furthermore, $\calA$ is maximal if and only if $\Ran(\calA+z)=X$ for some (and then for all) $\Im z>0$; in this case, by the above estimate,  $\calA+z$ has a bounded inverse and the resolvent norm estimate
\begin{equation}
\norm{(\calA+z)^{-1}}\leq 1/\Im z, \quad \Im z>0
\label{b0a}
\end{equation}
holds true. 

If $\calA$ is maximal dissipative, then so is $-\calA^*$.

Let $\calA$ be a maximal dissipative operator, and let $L$ be a bounded dissipative operator (in particular, $L$ may be self-adjoint). Then the operator $\calA+L$, defined on $\Dom\calA$, is also maximal dissipative (see e.g. \cite[Section 3.1]{Davies}).

\subsection{Semigroups and their generators}

Let $\calS_\tau$, $\tau\geq0$, be a strongly continuous semigroup of contractions (operators of norm $\leq1$) on a Hilbert space $X$. The \emph{infinitesimal generator} of $\calS_\tau$ is the maximal dissipative operator
$$
\calA f=\lim_{\tau_\to0_+}\tfrac1{i\tau}(\calS_\tau f-f)
$$
(where the limit is in the norm of $X$) defined on the dense domain of elements $f$ where the above limit exists. 
Conversely, if $\calA$ is a maximal dissipative operator, then there exists a unique semigroup of contractions $\calS_\tau$ such that $\calA$ is the generator of $\calS_\tau$; notation:  $\calS_\tau=e^{i\tau\calA}$. 
If $\calA$ is the infinitesimal generator of $\calS_\tau$, then $-\calA^*$ is the infinitesimal generator of $\calS_\tau^*$.

\subsection{Cayley transform}
Let $\calA$ be a maximal dissipative operator on $X$. 
The \emph{Cayley transform} of $\calA$ (corresponding to the point $i\in\bbC_+$) is the contraction
$$
\mathcal{T}=(\calA-i)(\calA+i)^{-1}. 
$$
The contraction $\mathcal{T}$ has the property that $1$ is not an eigenvalue of $\mathcal{T}$ and $\Ran(I-\mathcal{T})=\Dom\calA$. 
Finally,  
$$
\mathcal{T}^*=(\calA^*+i)(\calA^*-i)^{-1}
$$
is the Cayley transform of $-\calA^*$.

\subsection{Contractions and the Wold decomposition}
Let $\Sigma$ be a contraction on a Hilbert space, i.e. an operator with the norm $\leq 1$. 
The \emph{defect indices} of $\Sigma$ is the (ordered) pair of integers
$$
\bigl(\rank(I-\Sigma^*\Sigma), \rank(I-\Sigma\Sigma^*)\bigr).
$$
In particular, any unitary operator has the defect indices $(0,0)$, any isometry has the defect indices $(0,k)$ with $k\geq0$ and the shift operator $S$ in $H^2(\bbT)$ has the defect indices $(0,1)$. 

A contraction $\Sigma$ is called \emph{completely non-unitary} (c.n.u.), if $\Sigma$ is not unitary on any of its invariant subspaces. For example, the shift operator $S$ is c.n.u.
The following result is a particular case of the \emph{Wold decomposition} of an isometry (see e.g. \cite[Theorem I.1.1]{NF}).

%%%%%%%%%%%%%%%
\begin{theorem}[Wold decomposition]\label{thm.Wold}
%%%%%%%%%%%%%%%
Let $\Sigma$ be an isometry on a Hilbert space $X$ with defect indices $(0,1)$, and let $q\in\Ran(I-\Sigma\Sigma^*)$, $q\not=0$. 
Then $X$ can be represented as an orthogonal sum 
$X=X^{\rm (u)}\oplus X^{\rm(cnu)}$, such that 
\begin{equation}
\Sigma=\begin{pmatrix} 
\Sigma^{\rm (u)} & 0
\\
0 & \Sigma^{\rm (cnu)}
\end{pmatrix}
\quad \text{ in $X^{\rm (u)}\oplus X^{\rm (cnu)}$,}
\label{e8}
\end{equation}
where $\Sigma^{\rm (u)}$ is unitary and $\Sigma^{\rm (cnu)}$ is c.n.u. 
Moreover, $X^{\rm (cnu)}$ coincides with the closed linear span of $\{\Sigma^m q\}_{m=0}^\infty$ and $\Sigma^{\rm (cnu)}$ is unitarily equivalent to the shift operator $S$ on $H^2(\bbT)$. 
\end{theorem}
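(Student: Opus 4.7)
The plan is to construct the decomposition by hand from the wandering vector $q$ in the one-dimensional defect space $\Ran(I-\Sigma\Sigma^*)$ and verify the required properties directly. First I would exploit the identity $(\Sigma\Sigma^*)^2=\Sigma(\Sigma^*\Sigma)\Sigma^*=\Sigma\Sigma^*$ together with self-adjointness to identify $\Sigma\Sigma^*$ as the orthogonal projection onto $\Ran\Sigma$, which gives
\[
\Ran(I-\Sigma\Sigma^*)=(\Ran\Sigma)^\perp=\Ker\Sigma^*.
\]
In particular $\Sigma^*q=0$, and the family $\{\Sigma^m q\}_{m\ge 0}$ is orthogonal of constant norm $\|q\|$: the norm is preserved by the isometry, while for $m>n$
\[
\langle\Sigma^m q,\Sigma^n q\rangle=\langle\Sigma^{m-n}q,q\rangle=\langle q,(\Sigma^*)^{m-n}q\rangle=0,
\]
using $\Sigma^*\Sigma=I$ and $\Sigma^*q=0$.

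Next I would set $X^{\rm (cnu)}:=\overline{\mathrm{span}}\{\Sigma^m q:m\ge 0\}$ and $X^{\rm (u)}:=(X^{\rm (cnu)})^\perp$ and check that each is $\Sigma$-invariant. The inclusion $\Sigma X^{\rm (cnu)}\subset X^{\rm (cnu)}$ is immediate from $\Sigma\cdot\Sigma^m q=\Sigma^{m+1}q$. For $f\in X^{\rm (u)}$, one has $\langle\Sigma f,q\rangle=\langle f,\Sigma^*q\rangle=0$ and, for $n\ge 1$, $\langle\Sigma f,\Sigma^n q\rangle=\langle f,\Sigma^{n-1}q\rangle=0$ using $\Sigma^*\Sigma=I$. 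To upgrade the isometry $\Sigma^{\rm (u)}:=\Sigma|_{X^{\rm (u)}}$ to a unitary I would argue surjectivity: any $g\in X^{\rm (u)}$ is orthogonal to $q$, hence lies in $(\Ker\Sigma^*)^\perp=\Ran\Sigma$; writing $g=\Sigma f$ with $f=\Sigma^*g$, one verifies $\langle f,\Sigma^n q\rangle=\langle g,\Sigma^{n+1}q\rangle=0$, so $f\in X^{\rm (u)}$.

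Finally, the unitary equivalence of $\Sigma^{\rm (cnu)}:=\Sigma|_{X^{\rm (cnu)}}$ with the shift $S$ on $H^2(\bbT)$ is realised by the map $\Sigma^m q\mapsto\|q\|\,\zeta^m$, extended by linearity and continuity using the orthogonality above; this is manifestly an isometric isomorphism intertwining the two operators. Complete non-unitarity then reduces to a nested-intersection argument: a closed invariant subspace $Y\subset X^{\rm (cnu)}$ on which $\Sigma^{\rm (cnu)}$ acts unitarily must satisfy $Y=(\Sigma^{\rm (cnu)})^m Y\subset\overline{\mathrm{span}}\{\Sigma^n q:n\ge m\}$ for every $m\ge 0$, and the intersection of these tails is $\{0\}$ because $\{\Sigma^m q\}_{m\ge 0}$ is an orthogonal basis of $X^{\rm (cnu)}$.

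There is no serious analytic obstacle here; the only step of genuine care is the surjectivity of $\Sigma^{\rm (u)}$, which is where the hypothesis that the defect indices are $(0,1)$ (so that $\Ker\Sigma^*$ is exactly the one-dimensional space spanned by $q$) is actually used. This matches the fact that the statement is a special case of the classical Wold decomposition quoted from \cite{NF}.
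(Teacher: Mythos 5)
Your argument is correct and complete. Note, however, that the paper does not prove this statement at all: it is quoted as a special case of the classical Wold decomposition with a citation to Sz.-Nagy--Foias \cite[Theorem I.1.1]{NF}, so there is no in-paper proof to compare against. What you have written is a correct, self-contained verification of exactly the special case needed (defect indices $(0,1)$), and it is essentially the standard construction: identifying $\Sigma\Sigma^*$ as the orthogonal projection onto $\Ran\Sigma$ so that $q$ spans $\Ker\Sigma^*$ and is wandering, taking $X^{\rm (cnu)}$ to be the closed span of the orthogonal family $\{\Sigma^m q\}_{m\ge 0}$, and checking invariance of both summands, unitarity of the restriction to the complement, the intertwining with $S$ on $H^2(\bbT)$ via $\Sigma^m q\mapsto \|q\|\zeta^m$, and complete non-unitarity via the triviality of $\bigcap_m \Sigma^m X^{\rm (cnu)}$. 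You correctly isolate the one place where the hypothesis on the defect indices is genuinely used, namely that $\Ker\Sigma^*=\bbC q$ in the surjectivity argument for $\Sigma^{\rm (u)}$. The general theorem in \cite{NF} identifies $X^{\rm (u)}$ intrinsically as $\bigcap_m \Sigma^m X$ and handles arbitrary defect, but for the rank-one case your direct computation buys a shorter, fully elementary proof; no gaps.
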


\end{document}